\crefname{hypothesis}{Hypothesis}{Hypotheses}
\title{Gauss-Newton Temporal Difference Learning with \\ Nonlinear Function Approximation}
\author{Zhifa Ke\thanks{Center for Data Science, Peking University, China, 
  (\email{kezhifa@stu.pku.edu.cn}).}
\and Junyu Zhang\thanks{National University of Singapore, Singapore, \email{junyuz@nus.edu.sg}.}
\and Zaiwen Wen\thanks{Beijing International Center for Mathematical Research, Peking University, China, (\email{wenzw@pku.edu.cn}).}}
\newtheorem{assumption}{Assumption}
\begin{document}

\maketitle

\begin{abstract}
In this paper, a Gauss-Newton Temporal Difference (GNTD) learning method is proposed to solve the Q-learning problem with nonlinear function approximation. In each iteration, our method takes one Gauss-Newton (GN) step to optimize a variant of Mean-Squared Bellman Error (MSBE), where target networks are adopted to avoid double sampling. Inexact GN steps are analyzed so that one can safely and efficiently compute the GN updates by cheap matrix iterations. Under mild conditions, non-asymptotic finite-sample convergence to the globally optimal Q function is derived for various nonlinear function approximations. In particular, for neural network parameterization with relu activation, GNTD achieves an improved sample complexity of  $\tilde{\mathcal{O}}(\varepsilon^{-1})$, as opposed to the $\mathcal{\mathcal{O}}(\varepsilon^{-2})$ sample complexity of the existing neural TD methods. An $\tilde{\mathcal{O}}(\varepsilon^{-1.5})$ sample complexity of GNTD is also established for general smooth function approximations. We validate our method via extensive experiments in several RL benchmarks, where GNTD exhibits both higher rewards and faster convergence than TD-type methods.
\end{abstract}

\begin{keywords}
Bellman Equation, Temporal Difference Learning, Gauss-Newton.
\end{keywords}

\begin{MSCcodes}
68Q25, 68R10, 68U05
\end{MSCcodes}

\section{Introduction}
\label{sect:intro}
Evaluating the Q function for certain policy $\pi$, and learning the optimal Q function, are important targets in Reinforcement Learning (RL). During the past decades, under the neural networks function approximation, deep Q-learning algorithms \cite{mnih2013playing/dqn, van2016deep/dqn} have achieved impressive empirical success in large-scale practical problems. They also serve as a fundamental building block for popular RL methods like TRPO \cite{schulman2015trust/trpo} and the actor-critic algorithms \cite{konda1999actor/ac,lillicrap2015continuous/ac,fujimoto2018addressing/ac}, 
where a proper parameterization of the Q function can be crucial to the scalability of the algorithm. Common examples include linear \cite{bhandari2018finite/nonasy, zou2019finite/nonasy, srikant2019finite/nonasy} and neural network \cite{cai2024neural/nonasy, xu2020finite/nonasy, agazzi2022temporal/nonasy} function approximations. In this work, we mainly study the Gauss-Newton-type semi-gradient methods for finding the evaluating Q functions for a fixed policy. The proposed method can be easily extended to optimal Q functions. 

Let $Q^\pi$ be the unknown state action value function to be estimated under policy $\pi$ and let $T^\pi$ be the corresponding Bellman operator. Then the standard formulation for policy evaluation with function approximation is to minimize the Mean-Squared Bellman Error (MSBE):
\begin{eqnarray}
\label{eq:MSBE}
    \min_{\theta} \mathcal{L}_\mu(\theta) & = &\|Q(\theta)-T^\pi Q(\theta)\|_\mu^2 \ = \ \mathbb{E}_{(s,a)\sim \mu}\big[\left(Q(s,a;\theta)-T^\pi Q(s,a;\theta)\right)^2\big],
\end{eqnarray} 
where $\mu$ is the \emph{stationary distribution} of the state-action pairs under the policy $\pi$. A direct optimization of MSBE can be very hard, as the double sampling issue denies an unbiased estimator of $\nabla \mathcal{L}_\mu(\theta)$. A practical technology to address the double sampling issue is to twist the loss function by introducing a target parameter $\theta_{targ}$ as follows
\begin{equation}
    \label{eq:targ-obj}
    \mathcal{L}_\mu(\theta;\theta_{targ}) = \|Q(\theta)-T^\pi Q(\theta_{targ})\|_\mu^2.
\end{equation} 
Based on this technique,
one popular class of methods for solving \eqref{eq:MSBE} are Temporal Difference (TD) \cite{sutton1988learning/ql} learning algorithm as well as its variants \cite{bradtke1996linear/lstd, sutton2009fast/gtd,sutton2009convergent/gtd, tu2018least/lstd}. Given the surrogate loss $\mathcal{L}_\mu(\theta;\theta_{targ})$ and the target parameter $\theta_{targ}^k=\theta^k$, the vanilla TD learning method updates by 
$$\theta^{k+1} = \theta^k - \eta g^k,$$
where $g^k$ is an unbiased estimator of $\nabla_{\theta}\mathcal{L}_\mu(\theta^k;\theta_{targ}^k)$. Since $\mathbb{E}[g^k]$ only contains a part of $\nabla_{\theta}\mathcal{L}_\mu(\theta^k)$, TD is also termed as the semi-gradient method. 
Though TD-type algorithms have been empirically successful in many numerical applications \cite{mnih2013playing/dqn,lillicrap2015continuous/ac,fujimoto2018addressing/ac, haarnoja2018soft/ac}, the  semi-gradient nature of the TD-type methods has mostly limited their theoretical convergence guarantees to linear function approximations \cite{sutton2009fast/gtd,sutton2009convergent/gtd}. 
Let us denote an $\varepsilon$-optimal solution to be a point $\bar{\theta}$ such that  $\mathbb{E}\left[\|Q(\bar\theta)-Q^\pi\|_\mu^2\right] \leq\varepsilon$ or 
$\|Q(\bar\theta)-Q^\pi\|_\mu^2 \leq\varepsilon$ with high probability\footnote{When non-squared error measure is adopted, to be consistent, we say $\bar\theta$ is $\varepsilon$-optimal if $\mathbb{E}\left[\|Q(\bar\theta)-Q^\pi\|_\mu\right] \leq\sqrt{\varepsilon}$ or 
$\|Q(\bar\theta)-Q^\pi\|_\mu \leq\sqrt{\varepsilon}$ with high probability.}. Then recent results on neural TD \cite{cai2024neural/nonasy,xu2020finite/nonasy} typically require a suboptimal $\mathcal{\mathcal{O}}(\varepsilon^{-2})$ sample complexity to find some $\varepsilon$-optimal solution. For general smooth function approximation, TD-type methods have divergence issues both in theory and practice \cite{tsitsklis/asy, maei2009convergent/gtd, achiam2019towards/var, brandfonbrener2019geo/nonasy}. 
Some variants, such as GTD2 and TDC \cite{maei2009convergent/gtd, wang2021non/smoothgtd, xu2021sample/smoothgtd}, consider the projected mean-squared Bellman error (MSPBE) in the parameter space and establish asymptotic or finite-time analyses. However, these algorithms can only prove that the parameters converge to a stationary point. 

Another popular approach for TD-learning is the Fitted Q-Iteration (FQI) method \cite{riedmiller2005neural/fqi, chen2019information/fqi, fan2020theoretical/fqi}, which repeatedly solves a nonlinear least square subproblem to fit the Q-iteration:
$$\theta^{k+1}\approx \mathrm{argmin}_\theta \, \mathcal{L}_\mu(\theta;\theta^k)$$
by sufficiently many stochastic gradient steps. Under expressive enough function approximation class such as over-parameterized neural networks, the FQI can largely approximate the contractive Q-iteration $Q(\theta^{k+1})\approx T^\pi Q(\theta^k)$ and hence retain desirable convergence properties. However, the need to solve the nonlinear least square subproblems may lead to an increased demand for RL state-action-transition samples and the per iteration computation time, which makes FQI less competitive against TD in practice.  

Interestingly, it can be observed that each iteration of TD method can be viewed as an inexact FQI that takes only one stochastic gradient descent step to optimize the nonlinear least square subproblem. The inexactness here can be an interpretation of the divergence behavior of TD learning for general function approximations. This motivates us to design the Gauss-Newton Temporal Difference (GNTD) learning algorithm, which takes only one Gauss-Newton step to optimize FQI's subproblem. The proposed method lies exactly in between FQI and TD, which requires fewer computation than FQI, while obtaining better theoretical convergence than TD. 

In this paper, we provide a complete finite-time convergence analysis of GNTD under neural network functions and general smooth functions under the i.i.d. sampling setting. Leveraging the powerful theoretical properties of GNTD in nonlinear function approximation, we further explore its practical performance in real-world problems. Specifically, for neural networks, we design a new algorithm framework based on the Kronecker-factored approximate curvature (K-FAC) method \cite{martens2015optimizing/kfac}. This algorithm demonstrates exceptional numerical performance across both continuous and discrete tasks, as well as offline and online settings.

\subsection{Contributions}
Our contributions are summarized as follows.
\begin{itemize}[leftmargin=*]
\item 
We propose the Gauss-Newton Temporal Difference (GNTD) learning algorithm to solve the Q learning problem with function approximation. We derive the convergence and sample complexity of our method under the i.i.d. sampling setting. Compared with the existing results of TD methods, GNTD achieves better sample complexities under nonlinear function approximations. For neural network approximation, GNTD achieves an improved $\tilde{\mathcal{O}}(\varepsilon^{-1})$ sample complexity as opposed the $\mathcal{O}(\varepsilon^{-2})$ sample complexity of the existing neural TD methods. Furthermore, as a by-product of the algorithm analysis, we also establish an $\tilde{\mathcal{O}}(\varepsilon^{-1.5})$ sample complexity of GNTD to find the global optima for smooth function approximation. 

\item We design an an alternative implementation of the Gauss-Newton step based on the Levenberg-Marquardt method and K-FAC \cite{martens2015optimizing/kfac} approximation method, named GNTD-KFAC, which proves to be highly effective in practice. We conduct extensive experiments to numerically validate the efficiency of GNTD. For both online and offline RL tasks, our method outperforms TD and its variants, including Deep Q-Network (DQN) method. 
\end{itemize}

\subsection{Related Work}
TD learning was first proposed for policy evaluation and Q-learning \cite{sutton1988learning/ql}, and it later on developed numerous variants. TD learning with linear approximation has been extensively studied, including Gradient TD (GTD) \cite{sutton2009fast/gtd,sutton2009convergent/gtd}, Accelerated GTD \cite{pan2017accelerated/gntd, ghiassian2020gradient/gntd}, and
Least-squares TD (LSTD) \cite{bradtke1996linear/lstd, boyan2002technical/lstd, ghavamzadeh2010lstd, tu2018least/lstd}. Subsequently, GTD \cite{maei2009convergent/gtd} was extended to the case of nonlinear function approximation. Since deep learning has achieved great success in large-scale problems, several variants based on neural network approximations have also been proposed, including Deep Q-network (DQN) \cite{mnih2013playing/dqn}, Preconditioned Q Networks (PreQN) \cite{achiam2019towards/var}, and TD3 \cite{fujimoto2018addressing/ac}. 

For the sample complexities of nonlinear TD method and its variants, current results mainly focus on neural network function approximation and general smooth function approximations. For neural TD methods, the key observation is that wide over-parameterized neural networks are approximately linear under the Neural Tangent Kernel (NTK) regime \cite{du2018gradient/ntk, zhang2019fast/ntk, allen2019convergence/ntk}. However, most existing neural TD methods only achieve the sub-optimal $\mathcal{\mathcal{O}}(\varepsilon^{-2})$ sample complexities \cite{cai2024neural/nonasy, brandfonbrener2019geo/nonasy, xu2020finite/nonasy}. 
A recent improvement was discovered in \cite{sun2022finite}, which derived an $\mathcal{O}(\varepsilon^{-\frac{2}{2-\alpha}})$  sample complexity for neural TD with some $\alpha\in(0,1]$. However, only the $\mathcal{\mathcal{O}}(\varepsilon^{-2})$ complexity with $\alpha=1$ can be guaranteed theoretically. Any complexity with $\alpha<1$ requires additional assumptions on the sparsity of the semi-gradients, which is hard to justify for neural TD with Gaussian initialization and restricted update region. There have also been recent works \cite{xu2021sample/smoothgtd, wang2021non/smoothgtd} that provide analysis for variants of TD for smooth function approximation. However, they can only guarantee that the parameters converge to a stationary point and are difficult to implement in practical large-scale tasks.

\section{Gauss-Newton Temporal Difference (GNTD) Learning}
\label{sect:alg}

\subsection{Preliminaries}
We consider the infinite-horizon discounted Markov decision process (MDP) $\mathcal{M}=(\mathcal{S}, \mathcal{A}, \mathbb{P}, r, \gamma)$, with state space $\mathcal{S}$, action space $\mathcal{A}$, reward function $r: \mathcal{S}\times \mathcal{A}\rightarrow [-R_{\max}, R_{\max}]$, transition probability 
$\mathbb{P}$, and a discount factor $\gamma \in(0,1)$. Let policy $\pi$ be a mapping that returns a probability distribution $\pi(\cdot|s)$ over the action space $\mathcal{A}$, for any state $s \in \mathcal{S}$. Then the state-action value function (Q-function) under policy $\pi$ is  $$Q^{\pi}(s,a)\!:=\!\mathbb{E}_\pi\!\!\left[\sum_{t=0}^{\infty} \gamma^{t} \!\cdot r(s_t,a_t) | s_{0}=s, a_{0}=a\right]\!, \,\,\forall s,a.$$
For any mapping $Q: \mathcal{S}\times \mathcal{A} \rightarrow \mathbb{R}$, we denote $T^\pi$ as the Bellman operator: 
$$T^\pi Q(s,a):=r(s, a)+\gamma P^\pi Q(s,a), \,\,\forall s,a,$$
where
$P^{\pi} Q(s,a)\!=\!\mathbb{E}\left[Q(s^{\prime}\!, a^{\prime}) | s^{\prime} \!\!\sim\! \mathbb{P}(\cdot | s, a), a^{\prime} \!\!\sim\! \pi\left(\cdot | s^{\prime}\right)\right].$ 

\subsection{The GNTD Method}
\label{section:GNTD-method}
Let the action-value function be parameterized as $Q(s,a;\theta)$ and we view $Q(\theta)$ as an $|\mathcal{S}\|\mathcal{A}|\times 1$ column vector, with $(s,a)$ being a multi-index arranged in the lexicographical order.
Recall the nonlinear least square subproblem of FQI:  
\begin{equation*} 
    \min_{\theta} \, \|Q(\theta) - T^\pi Q(\theta)\|^2_\mu,
\end{equation*} 
Unlike FQI that solves this problem with sufficiently many SGD steps, and unlike TD that optimizes the loss with merely one SGD step, we would like to propose a method lying between FQI and TD methods. Denote $J_Q(\theta)$ as the Jacobian matrix of the parameterized Q-function $Q(\theta)$. Then the GNTD method linearizes the $Q(\theta)$ in the FQI subproblem and updates the iterates by
\begin{equation} 
\label{eq:population}
\!\!\!\!\begin{cases}
d^{k}_* = \mathrm{argmin}_d \, \|Q(\theta^k) \!+\! J_Q(\theta^k)d \!-\! T^\pi Q(\theta^k)\|^2_\mu,\\ 
  \theta^{k+1} = \theta^k + \beta\cdot d^k_*.
\end{cases} 
\end{equation} 
The intuition behind the GNTD update is very clear. If  $\text{Span}\{J_Q(\theta^k)\}$ spanned by the columns of the Jacobian is expressive enough s.t. 
$J_Q(\theta^k)d^k_* = T^\pi Q(\theta^k) - Q(\theta^k).$
Under proper conditions, informally, one also has
$$\|Q(\theta^{k}+\beta d^k_*) - Q(\theta^k)- \beta J_Q(\theta^{k}) d^k_*\|\leq \mathcal{O}(\beta^2).$$
Combining the above two inequalities yields 
$$Q(\theta^{k+1}) = \beta\cdot T^\pi Q(\theta^k) + (1-\beta)\cdot Q(\theta^k) + \mathcal{O}(\beta^2).$$
Then the contraction of Bellman operator further yields
$\|Q(\theta^K) - Q^\pi\|_\infty \leq (1-(1-\gamma)\beta)^K\|Q(\theta^0) - Q^\pi\|_\infty + \mathcal{O}\big(\frac{\beta}{1-\gamma}\big).$ Therefore, setting $\beta=\mathcal{O}(\epsilon)$ or adopting a diminishing sequence of $\beta^k\to0$ will provide the convergence of GNTD in this ideal situation.  

For each tuple $\xi\!=\!(s,a,r,s',a')\sim\mathcal{D}$ where $\mathcal{D}$ is the distribution where  $(s,a)\!\sim\!\mu$, $r=r(s,a)$, $s'\!\sim\! \mathbb{P}(\cdot|s,a)$, $a'\!\sim\!\pi(\cdot|s')$, we define the loss function 
$$\ell(\xi,\theta^k,d):=\big(\nabla Q(s,a;\theta^k)^{\top}d+\delta^k(\xi)\big)^2$$ 
where  $\delta^k(\xi)=Q(s,a;\theta^k)-\left(r(s, a)+\gamma Q\left(s^{\prime}, a^{\prime};\theta^k\right)\right)$ is the TD error w.r.t. the tuple $\xi$. By removing constant terms, the GNTD subproblem for $d^k$ in \eqref{eq:population} can be rewritten in a more sample-friendly form: 
\begin{equation}
\label{eq:population-direction}
    d^k_* = \mathrm{argmin}_{d}\, L(\theta^k,d) := \mathbb{E}_{\xi\sim\mathcal{D}}\left[\ell(\xi,\theta^k,d)\right]. 
\end{equation}
Define the curvature matrix $H$ and the semi-gradient $g$ as
\begin{equation}
\label{eq:H-and-g}
\begin{aligned}
&H(\theta^k):=\mathbb{E}_{(s,a)\sim \mu}\left[ \nabla Q(s,a;\theta^k) \nabla Q(s,a;\theta^k)^{\top} \right],\\
&g(\theta^k):=\mathbb{E}_{\xi\sim\mathcal{D}}\left[ \delta^k(\xi)\cdot\nabla Q(s,a;\theta^k)\right].
\end{aligned}
\end{equation}
Then \eqref{eq:population-direction} has a closed form solution $d^k \!=\! -[H(\theta^k)]^{-1}g(\theta^k)$. Note that the solution $d^k$ is the Gauss-Newton direction for solving the nonlinear system $Q(\theta)=T^\pi Q(\theta^k)$ \cite{wright1999numerical}, and the semi-gradient $g(\theta^k)$ is exactly the expected TD direction.  Hence we call our method the Gauss-Newton Temporal Difference (GNTD) method. 

Note that the population update \eqref{eq:population} is not practically implementable. We introduce an empirical version of \eqref{eq:population-direction}, with an additional quadratic damping term to improve robustness:
\begin{equation}
\label{eq:stochastic-direction}
    d^k=\arg\min_{d}\,\, \frac{1}{N}\sum_{\xi\in\mathcal{D}_k}\ell(\xi,\theta^k,d)+\frac{\omega}{2} \|d\|_2^2,
\end{equation}
where $\mathcal{D}_k=\{(s_i,a_i,r_i,s'_i,a'_i)\}_{i=1}^{N}$ is a  batch of $N$ samples from $\mathcal{D}$.
Then \eqref{eq:stochastic-direction} leads to the empirical GNTD update:
\begin{equation}
\label{eq:stochastic}
\begin{cases}
\hat{H}^k = \frac{1}{N}\sum_{\xi\in\mathcal{D}_k}\nabla Q(s,a;\theta^k) \nabla Q(s,a;\theta^k)^{\top},\\
\hat{g}^k = \frac{1}{N}\sum_{\xi\in\mathcal{D}_k} \delta^k(\xi)\cdot\nabla Q(s,a;\theta^k),\\
\theta^{k+1}=\theta^k - \beta \cdot (\hat{H}^k+\omega I)^{-1} \hat{g}^k,
\end{cases} 
\end{equation}
as detailed in Algorithm \ref{alg:GNTD}. For large-scale function approximation class such as over-parameterized neural networks, the matrix inversion in \eqref{eq:stochastic} can be expensive. In this case, we provide a practically efficient implementation of GNTD using the Kronecker-factored Approximate Curvature (K-FAC) method. Please see the details in Section \ref{sect:kfac}.

\begin{algorithm}[tb]
\caption{The GNTD Algorithm}
\label{alg:GNTD}
\begin{algorithmic}
\STATE \textbf{Input:} Distribution $\mathcal{D}$, $Q$ function parameters $\theta^0$, batch size $N$, damping rate $\omega$, learning rate $\beta$.
\STATE Initialize the Q network parameters $\theta^0$.
\FOR{$k=0,1,\cdots,K-1$}
\STATE Obtain a dataset $\mathcal{D}_k=\{(s_i,a_i,r_i,s'_i,a'_i)\}_{i=1}^{N}$ of batch size $N$ from data distribution $\mathcal{D}$.
\STATE Update $\theta^{k+1}$ by \eqref{eq:stochastic}.
\ENDFOR
\STATE \textbf{Output:} $\theta^K$.
\end{algorithmic}
\end{algorithm}

\section{Convergence Results of GNTD Learning}
\label{sect:convergence}
In this section, we provide a finite-time convergence analysis of stochastic GNTD learning method linear, under neural network and general smooth function approximations for Algorithm \ref{alg:GNTD}. The detailed proof can be found in Section \ref{sect:proof}. First of all, let us make some basic assumptions on the data distribution. 

\begin{assumption}
\label{assumption:stationary}
The data distribution $\mu$ is the stationary distribution over the state-action pairs under the policy $\pi$. 
\end{assumption}
We write $U\!:=\!\text{diag}(\mu)$ as an $|\mathcal{S}| |\mathcal{A}|$-dimensional diagonal matrix, whose $(s,a)$-th diagonal element is $\mu(s,a)$.
Denote $\langle f,g\rangle_\mu\!:=\!\sum_{s,a} \mu(s,a) f(s,a)g(s,a)$, and $\left\|f\right\|_\mu^2=\langle f,f\rangle_\mu$. 
Then Assumption \ref{assumption:stationary} indicates that $T^\pi$ is a contraction w.r.t. $\|\cdot\|_\mu$ \cite{tsitsklis/asy}, that is,
$$
\|T^\pi Q_1-T^\pi Q_2\|_\mu \leq \gamma \|Q_1-Q_2\|_\mu, \,\,\forall Q_1, Q_2\in\mathbb{R}^{|\mathcal{S}|\times|\mathcal{A}|}.
$$
For the ease of the notation, we regard $Q^\pi$ as a $|\mathcal{S}\|\mathcal{A}|\times 1$ column vector similar to $Q(\theta)$, satisfying the Bellman equation $Q^\pi =T^\pi Q^\pi$ in later discussion.

\subsection{Linear Approximation}
Consider the linear function approximation with $\theta\in\mathbb{R}^d$: $$Q(s,a;\theta)=\phi(s,a)^{\top}\theta, \quad \forall s,a.$$
Let $\Phi\in\mathbb{R}^{|\mathcal{S}\|\mathcal{A}|\times d}$ be the collection of all feature vectors, whose $(s,a)$-th row equals $\phi(s,a)^{\top}$. Without loss of generality, we assume $\|\phi(s,a)\|\leq 1$ for any $(s,a)$. Then the minimizer $\theta^*$ of the MSBE \eqref{eq:MSBE} satisfies the projected Bellman equation
$$
\Phi \theta^*=\Pi_\mu T \Phi\theta^*,
$$
where $\Pi_\mu$ is the projection to the subspace $\{\Phi x : x\in \mathbb{R}^d\}$ under the inner product $\langle\cdot,\cdot\rangle_\mu$. We denote $Q^*:=\Phi\theta^*$ as the optimal linear approximator. 

In the under-parameterized regime where $d<|\mathcal{S}\|\mathcal{A}|$, we make the following assumption for the feature covariance matrix, in accordance with \cite{bhandari2018finite/nonasy}. 
\begin{assumption}
\label{assumption:cov-min-eig}
Let $\Sigma:=\mathbb{E}_{(s,a)\sim\mu}\left[\phi(s,a)\phi(s,a)^{\top}\right]$ be the feature covariance matrix. We assume $\Sigma\succ0$ and denote $\lambda_0>0$ as its minimum eigenvalue.  
\end{assumption}
Then the next theorem provides the convergence rate of stochastic linear GNTD in the under-parameterized regime, which matches the standard rate of linear TD under the same assumption \cite{bhandari2018finite/nonasy}.

\begin{theorem}
\label{theorem:sto-lin-under-parameterization}
Suppose Assumptions \ref{assumption:stationary} and \ref{assumption:cov-min-eig} hold. For any $\varepsilon\ll \|Q^0-Q^*\|_\mu$, if we set $\beta=\frac{(1-\gamma)\lambda_0}{4}$, the damping rate $\omega\in(0,1)$  for each iteration, then the output $\theta^K$ of Algorithm \ref{alg:GNTD} satisfies
$$
\begin{aligned}
    \|Q(\theta^K)-Q^\pi \|_\mu\ \leq\ &\left(1-\frac{(1-\gamma)^2\lambda_0}{8}\right)^{K}\|Q(\theta^0)-Q^*\|_\mu\\
    &+C_1\left(\omega+\sqrt{\frac{\log (K/\delta)}{N}}\right)+\frac{1}{1-\gamma} \|\Pi_\mu Q^\pi-Q^\pi\|_\mu.
\end{aligned}
$$
w.p. $1-\delta$, where $C_1>0$ is a given constant. 
\end{theorem}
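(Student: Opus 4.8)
The plan is to analyze the stochastic update~\eqref{eq:stochastic} as a perturbed version of the damped mean-path update $\theta^{k+1}=\theta^k-\beta(\Sigma+\omega I)^{-1}g(\theta^k)$, reusing the contraction behind Theorem~\ref{theorem:pop-lin-under-parameterization} and controlling the sampling error by concentration. Writing $\Delta^k:=(\Sigma+\omega I)^{-1}g(\theta^k)-(\hat H^k+\omega I)^{-1}\hat g^k$, the update splits as $\theta^{k+1}=\big[\theta^k-\beta(\Sigma+\omega I)^{-1}g(\theta^k)\big]+\beta\Delta^k$. For the deterministic part I would pass to $Q$-space: the exact Gauss–Newton step sends $Q(\theta^k)$ to $(1-\beta)Q(\theta^k)+\beta\,\Pi_\mu T Q(\theta^k)$, and $\Pi_\mu T$ is a $\gamma$-contraction with fixed point $Q^*$; the damping merely replaces the projection $\Pi_\mu=\Phi\Sigma^{-1}\Phi^\top U$ by $\Pi_\mu^\omega=\Phi(\Sigma+\omega I)^{-1}\Phi^\top U$, and since $\|\phi(s,a)\|\le1$ forces $\Sigma\preceq I$, the resolvent identity gives $\|\Pi_\mu^\omega-\Pi_\mu\|_\mu\le \omega/\lambda_0^2$. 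With $\omega=\tfrac{(1-\gamma)\lambda_0^2}{18}$ and $\beta=\tfrac{(1-\gamma)\lambda_0}{18}$ this bias is benign, so the deterministic part contracts $\|Q(\theta^k)-Q^*\|_\mu$ by a factor $1-\Theta\big((1-\gamma)^2\lambda_0\big)$, with the constant worked out to be $1-\tfrac{(1-\gamma)^2\lambda_0}{72}$ after absorbing the bias and the $(1+\gamma)$ factors.

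Next I would bound $\|\Delta^k\|$ by concentration. Decompose $\Delta^k=(\Sigma+\omega I)^{-1}\big(g(\theta^k)-\hat g^k\big)+\big[(\Sigma+\omega I)^{-1}-(\hat H^k+\omega I)^{-1}\big]\hat g^k$; the resolvent identity together with $\hat H^k+\omega I\succeq\omega I$ and $\Sigma+\omega I\succeq(\lambda_0+\omega)I$ yields $\|\Delta^k\|\le\tfrac{1}{\lambda_0+\omega}\|g(\theta^k)-\hat g^k\|+\tfrac{\|\hat H^k-\Sigma\|}{\omega(\lambda_0+\omega)}\|\hat g^k\|$. Here $\|\hat g^k\|\le\max_{\xi\in\mathcal{D}_k}|\delta^k(\xi)|\le R_{\max}+(1+\gamma)\|\theta^k\|$ since $\|\phi\|\le1$, while matrix and vector Bernstein/Hoeffding inequalities bound $\|\hat H^k-\Sigma\|$ and $\|\hat g^k-g(\theta^k)\|$ by $O\big((R_{\max}+\|\theta^k\|)\sqrt{\log(K/\delta)/N}\big)$ with probability $1-\delta/K$. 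Plugging in, one batch contributes a step error of size $O\big(\tfrac{\beta(R_{\max}+\|\theta^k\|)}{\omega\lambda_0}\sqrt{\log(K/\delta)/N}\big)$, which the choice $N=\Theta\big(\tfrac{1}{(1-\gamma)^2\varepsilon^2}\log\tfrac{K}{\delta}\big)$ reduces to the order of the claimed statistical term — provided $\|\theta^k\|$ stays bounded uniformly in $k$.

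The remaining work is a joint induction over $k$. On the event (of probability $\ge 1-\delta$ by a union bound over the $K$ iterations) that all per-iteration concentration estimates hold, I would maintain simultaneously: (i) the noisy contraction $a_{k+1}\le(1-\tfrac{(1-\gamma)^2\lambda_0}{72})a_k+b$ for $a_k:=\|Q(\theta^k)-Q^*\|_\mu$ and $b$ the statistical step size above, which unrolls to $a_k\le(1-\tfrac{(1-\gamma)^2\lambda_0}{72})^k a_0+\tfrac{72b}{(1-\gamma)^2\lambda_0}$; and (ii) the a priori bound $\|\theta^k-\theta^*\|\le B$, which follows from (i) and~\eqref{eq:Q-to-theta} once $\varepsilon\ll\|Q^0-Q^*\|_\mu$, and which in turn keeps the constants in (i)'s concentration bounds independent of $k$. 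This closes the induction; combining with $\|Q(\theta^K)-Q^\pi\|_\mu\le\|Q(\theta^K)-Q^*\|_\mu+\|Q^*-Q^\pi\|_\mu$ and the standard projected-Bellman estimate $\|Q^*-Q^\pi\|_\mu\le\tfrac{1}{1-\gamma}\|\Pi_\mu Q^\pi-Q^\pi\|_\mu$ gives the three-term bound, with $\tfrac{72b}{(1-\gamma)^2\lambda_0}$ matching $\tfrac{6\tau_1\sqrt{\log(2K/\delta)}}{(1-\gamma)\sqrt{\lambda_0 N}}$ after tracking the $\sqrt{\lambda_0}$ factors from the $Q$-norm/parameter-norm conversions.

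The main obstacle is exactly the circularity in this induction: the per-step error feeding the contraction scales with $\|\theta^k\|$ (through $\|\hat g^k\|$ and through the concentration constants), yet bounding $\|\theta^k\|$ requires the contraction, which requires controlling that error. Breaking the loop requires arranging that the contraction \emph{strictly} dominates the perturbation whenever the iterate lies above the error floor, so that the radius $B$ is self-consistent; this in turn forces the constants $\beta=\tfrac{(1-\gamma)\lambda_0}{18}$ and $\omega=\tfrac{(1-\gamma)\lambda_0^2}{18}$ to be tuned so that the damping bias, the $(1+\gamma)$ factors, and the statistical slack together still leave the contraction factor at $1-\tfrac{(1-\gamma)^2\lambda_0}{72}$. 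Getting this bookkeeping to line up, rather than any single inequality, is the delicate part.
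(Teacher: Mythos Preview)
Your outline is sound and shares the paper's high-level scheme (concentration for $\hat H^k,\hat g^k$; a one-step contraction-plus-noise inequality; the joint induction with \eqref{eq:Q-to-theta}), but the one-step analysis is organized differently. You split off a damped mean-path step and contract $\|Q(\theta^k)-Q^*\|_\mu$ in $Q$-space via the map $Q\mapsto Q-\beta\Pi_\mu^\omega(Q-TQ)$; the paper instead expands $\|Q^{k+1}-Q^*\|_\mu^2$ directly and reduces the cross term $(\theta^*-\theta^k)^\top\Phi^\top U\Phi(\hat H^k+\omega I)^{-1}\hat g^k$ to the inner-product bound $(\theta^*-\theta^k)^\top g(\theta^k)\le -(1-\gamma)\|Q^k-Q^*\|_\mu^2$ of Lemma~\ref{lemma:pop-semi-grad-bound}, treating the stochastic curvature and gradient as correction terms. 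More importantly, the paper applies matrix Bernstein with threshold $\eta=\omega$ so that $\lambda_{\min}(\hat H^k+\omega I)\ge\lambda_0$ on the good event, yielding $\|(\hat H^k+\omega I)^{-1}\|\le 1/\lambda_0$ rather than your cruder $1/\omega$; \emph{this}, not a damping-bias correction to a separate population step, is the actual role of the choice $\omega=\tfrac{(1-\gamma)\lambda_0^2}{18}$. Your route still closes because $\beta/\omega=1/\lambda_0$ neutralizes the $1/\omega$ blow-up, at the price of looser $\lambda_0$-dependence in the constants.

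One point you glossed over: your damped mean-path map does \emph{not} purely contract toward $Q^*$, since $\Pi_\mu^\omega(Q^*-TQ^*)=(\Pi_\mu^\omega-\Pi_\mu)(Q^*-TQ^*)\ne 0$ in general. This leaves a persistent additive bias of order $\tfrac{\omega}{(1-\gamma)\lambda_0}\|Q^*-TQ^*\|_\mu$ in the error floor, not merely a multiplicative hit to the contraction factor as you suggest. It is ultimately harmless because $\|Q^*-TQ^*\|_\mu=\|(I-\Pi_\mu)TQ^*\|_\mu\le\tfrac{1}{1-\gamma}\|\Pi_\mu Q^\pi-Q^\pi\|_\mu$, so the term is absorbed into the approximation-error contribution already present in the statement, but this needs to be said explicitly.
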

If the true Q-function is realizable, namely,
\begin{equation}
    \label{eq:realizable}
    Q^\pi=Q^*\in\text{Span}\{\Phi\},
\end{equation}
then the intrinsic error term $\|\Pi_\mu Q^\pi-Q^\pi\|_\mu$ in Theorem \ref{theorem:sto-lin-under-parameterization} diminishes and the estimated Q-functions $Q(\theta^K)$ converges  linearly to the true Q-function $Q^\pi$. 

In accordance with the definition of the $\varepsilon$-optimal solution of \cite{bhandari2018finite/nonasy, cai2024neural/nonasy, xu2020finite/nonasy}, namely, $\mathbb{E}\left[\|Q(\theta^K)-Q^\pi\|_\mu^2\right]\leq\varepsilon$ or $\|Q(\theta^K)-Q^\pi\|_\mu^2\leq \varepsilon$ with high probability, we set the $\varepsilon$-optimal solution of GNTD to be $\|Q(\theta^K)-Q^\pi\|_\mu\leq \sqrt{\varepsilon}$. Suppose the Q-function is realizable, then setting $K=\mathcal{O}(\log \frac{1}{\varepsilon})$ in Theorem \ref{theorem:sto-lin-under-parameterization} yields $\|Q(\theta^K)-Q^* \|_\mu^2\leq\mathcal{O}(\varepsilon)$. Such a complexity $\tilde{\mathcal{O}}(\frac{1}{\varepsilon})$ complexity matches the result of stochastic linear TD \cite{bhandari2018finite/nonasy, zou2019finite/nonasy}.

\subsection{Neural Network Approximation}
For a two-layer neural network, we write:
\begin{equation}
\label{eq:neural-network}
    Q(s,a;\theta):=\frac{1}{\sqrt{m}}\sum_{r=1}^mb_r\psi(\theta_r^{\top}\phi(s,a)),
\end{equation}
where $\phi:\mathcal{S}\times\mathcal{A}\to\mathbb{R}^d$ is the feature mapping, $\theta\!:=\!(\theta_1^{\top},\cdots,\theta_m^{\top})^{\!\top}\!\in\mathbb{R}^{m\times d}, b:=(b_1,\cdots,b_m)^{\!\top}\in\mathbb{R}^m$ are the weight matrices, and $\psi(z):=\max\{0,z\}$ is the Relu activation. Similar to linear function approximation, we assume $\|\phi(s,a)\|\leq 1$ for any $(s,a)$. We denote $\theta^k_{r}$ the value of $\theta_r$ in iteration $k$. For each $r$, we initialize the weights $\theta^0_{r}\sim \mathcal{N}(0,\nu^2I)$ and $b_r\sim \rm{Unif} \{-1, +1\}$. The parameter $b$ will not be trained during the optimization. According to \cite{du2018gradient/ntk}, we make the following assumption to ensure the positive definiteness of the $\mu$-weighted Gram matrix.

\begin{assumption}
\label{assumption:feature-distance}
For all pairs $(s,a)\neq(s^\prime,a^\prime)$, we assume $\phi(s,a)\nparallel \phi(s^\prime,a^\prime)$. 
Moreover, We assume $\mu(s,a)>0$ to simplify the discussion. 
\end{assumption} 

The first setting in Assumption \ref{assumption:feature-distance} imply independence and boundedness between two feature vectors. Regarding the latter setting, let $\Omega:=\text{supp}(\mu)$ be the support of the distribution $\mu$. Then Assumption \ref{assumption:feature-distance} is equivalent to requiring that the feature vectors are linearly independent and the support $\Omega$ covers the full $\mathcal{S}\times\mathcal{A}$. In general, $\Omega=\mathcal{S}\times\mathcal{A}$ is a very strong assumption. However, it is not necessary for our analysis. For a policy $\pi$, under mild condition that the state transition Markov chain is aperiodic and irreducible, then $\mu(s,a)=0$ iff. $\pi(a|s)=0$. 
Then the Bellman equation is \emph{closed} on $\Omega$  since it does not involve $Q(s,a)$ for $(s,a)\notin\Omega$:
$$
Q^\pi (s,a)=r(s,a)+\!\!\sum_{(s^\prime,a^\prime)\in\Omega}\!\!\mathbb{P}(s^\prime|s,a)\cdot\pi(a^\prime|s^\prime)\cdot Q^\pi(s^\prime,a^\prime).
$$
This allows us to only care about the MSBE on $\Omega$:
$$
\min_\theta \mathbb{E}\left[\|Q(\theta)-T^\pi  Q(\theta)\|_\mu^2\right]=\mathbb{E}\left[\|Q(\theta)-T^\pi Q(\theta)\|_{\Omega,\mu}^2\right],
$$
where $||f||_{\Omega,\mu}^2\!:=\!\sum_{(s,a)\in\Omega}\mu(s,a)|f(s,a)|^2$. Therefore, Assumption \ref{assumption:feature-distance} can actually be relaxed to requiring the $\mu$-weighted Gram matrix on $\Omega$ to be positive definite. Though we will have no guarantee for $|Q(s,a;\theta)-Q^\pi(s,a)|$ for $(s,a)\notin\Omega$ in this case, it is not an issue. For example, when policy evaluation is applied as a built-in module of actor-critic methods, the Q-value outside $\Omega$ will never appear in the policy gradient formula. Therefore, we will assume $\Omega=\mathcal{S}\times\mathcal{A}$ for the ease of discussion.

\begin{theorem}
\label{theorem:sto-neural-netwrk}
Suppose Assumptions \ref{assumption:stationary} and \ref{assumption:feature-distance} hold. If we set $\beta,\omega\!\in\! (0,1)$ and the network width $m=\Omega \left(\frac{|\mathcal{S}|^3|\mathcal{A}|^3}{\delta^2}\right)$ for each iteration
, then the output $\theta^K$ of Algorithm \ref{alg:GNTD} satisfies
$$
\begin{aligned}
    \|Q(\theta^K)-Q^\pi\|_\mu\ \leq&\ (1-(1-\gamma)\beta)^K\|Q(\theta^0)-Q^\pi\|_\mu\\
    &\ +\frac{C_2}{(1-\gamma)^2} \left(\frac{\log(NK/\delta)}{\sqrt{N}}+\omega+m^{-\frac{1}{6}}\right)
\end{aligned}
$$
w.p. $1-\delta$, where 
$C_2>0$ is a given constant.  
\end{theorem}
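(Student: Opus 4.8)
The plan is to transport the population analysis of Theorem~\ref{theorem:pop-neural-network} to the stochastic setting with the same two ingredients that drive the over-parameterized linear case (Theorem~\ref{theorem:sto-lin-over-parameterization}): first, show that the iterates $\{\theta^k\}_{k=0}^{K}$ stay in a bounded neighborhood of the initialization, which simultaneously keeps the NTK linearization valid and supplies the deterministic bounds matrix concentration needs; second, bound the deviation of the damped empirical step $\beta\hat d^k=-\beta(\hat H^k+\omega I)^{-1}\hat g^k$ from the ideal population Gauss--Newton step, then re-run the contraction of Theorem~\ref{theorem:pop-neural-network} carrying these deviations as additive error. The anchor is the NTK estimate already behind Theorem~\ref{theorem:pop-neural-network}: under Assumption~\ref{assumption:feature-distance}, for $m=\Omega\big(|\mathcal{S}|^3|\mathcal{A}|^3/(\delta^2(1-\gamma)^2)\big)$, the $\mu$-weighted Gram matrix $U^{1/2}J_Q(\theta^0)J_Q(\theta^0)^{\!\top}U^{1/2}$ is positive definite with smallest eigenvalue at least some explicit $\lambda_0>0$, with probability $1-\delta/3$ over the random initialization.

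The core is an induction on $k$ establishing \emph{both} $\|\theta^k-\theta^0\|\le R$ for a radius $R=\mathcal{O}\big(\mathrm{poly}(|\mathcal{S}||\mathcal{A}|,\lambda_0^{-1},(1-\gamma)^{-1})\big)$ \emph{and} the per-step contraction below. Inside the ball of radius $R$, the two-layer ReLU network at the stated width obeys the usual NTK perturbation bounds: $U^{1/2}J_Q(\theta)J_Q(\theta)^{\!\top}U^{1/2}\succeq\tfrac{\lambda_0}{2}I$, $\|J_Q(\theta)\|$ and $\|Q(\theta)\|_\infty$ are $\mathcal{O}((1-\gamma)^{-1})$, and the linearization remainder $\|Q(\theta^{k+1})-Q(\theta^k)-J_Q(\theta^k)(\theta^{k+1}-\theta^k)\|_\mu=\mathcal{O}\big(m^{-1/2}\|\theta^{k+1}-\theta^k\|^2\big)$. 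On a per-iteration good event --- matrix Bernstein for $\hat H^k$ around $H(\theta^k)$ and a Bernstein bound for $\hat g^k$ around $g(\theta^k)=J_Q(\theta^k)^{\!\top}U(Q(\theta^k)-TQ(\theta^k))$, each accurate to $\mathcal{O}\big((1-\gamma)^{-1}\sqrt{\log(K/\delta)/N}\big)$ and costing $\mathcal{O}(\delta/K)$ --- one obtains $\|\theta^{k+1}-\theta^k\|=\beta\|\hat d^k\|\lesssim\tfrac{\beta}{\lambda_0}\big(\|Q(\theta^k)-TQ(\theta^k)\|_\mu+\sqrt{\log(K/\delta)/N}\big)$. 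Since the contraction forces $\|Q(\theta^k)-TQ(\theta^k)\|_\mu\le(1+\gamma)\|Q(\theta^k)-Q^\pi\|_\mu$ to decay geometrically to the error floor, the increments are summable up to a $\sqrt{\log(K/\delta)/N}$-sized floor, so $\|\theta^{k+1}-\theta^0\|\le\sum_{i\le k}\|\theta^{i+1}-\theta^i\|\le R$ for $N$ large enough relative to $K$, closing the induction; a union bound over the $K$ iterations and the initialization event produces the $\log(3K/\delta)$ in the final bound.

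For the per-iteration contraction I would write $\theta^{k+1}=\theta^k+\beta d^k+\beta e^k$ with $d^k=-[H(\theta^k)]^{\dagger}g(\theta^k)$ the \emph{exact} population Gauss--Newton direction. Since the positive-definite Gram matrix makes $J_Q(\theta^k)$ full row rank, $d^k$ satisfies $J_Q(\theta^k)d^k=TQ(\theta^k)-Q(\theta^k)$, and $\beta e^k$ collects the damping bias and the sampling error. Plugging in the linearization and this identity gives $Q(\theta^{k+1})=\beta\,TQ(\theta^k)+(1-\beta)Q(\theta^k)+\beta J_Q(\theta^k)e^k+\mathcal{O}\big(m^{-1/2}\beta^2\|\hat d^k\|^2\big)$; subtracting $Q^\pi$ and using $\gamma$-contraction of $T$ yields $\|Q(\theta^{k+1})-Q^\pi\|_\mu\le(1-(1-\gamma)\beta)\|Q(\theta^k)-Q^\pi\|_\mu+\beta\|J_Q(\theta^k)e^k\|_\mu+\mathcal{O}\big(m^{-1/2}\beta^2\|\hat d^k\|^2\big)$. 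Here the linearization remainder is \emph{quadratic}, $\mathcal{O}\big(m^{-1/2}\|Q(\theta^k)-Q^\pi\|_\mu^2\big)$, so at the stated $m$ it is at most $\tfrac{(1-\gamma)\beta}{2}\|Q(\theta^k)-Q^\pi\|_\mu$ and degrades the rate to $1-\tfrac{(1-\gamma)\beta}{2}$, exactly as in Theorem~\ref{theorem:pop-neural-network}; whereas $\|J_Q(\theta^k)e^k\|_\mu$ is genuinely additive, $\lesssim\lambda_0^{-2}(1-\gamma)^{-1}\big(\omega+\sqrt{\log(K/\delta)/N}\big)$, using that $\|Q(\theta^k)-TQ(\theta^k)\|_\mu$ and $\|J_Q(\theta^k)\|$ are bounded and $|\delta^k(\xi)|=\mathcal{O}((1-\gamma)^{-1})$ on the ball. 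Unrolling and summing $\sum_j\big(1-\tfrac{(1-\gamma)\beta}{2}\big)^j=\mathcal{O}\big((1-\gamma)^{-1}\beta^{-1}\big)$ turns the additive term into $\mathcal{O}\big((1-\gamma)^{-2}(\omega+\sqrt{\log(K/\delta)/N})\big)$; absorbing all feature- and initialization-dependent constants into $\tau_3$ gives precisely the stated bound.

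The main obstacle is the circular dependence between the uniform bound on $\{\theta^k\}$ and the concentration estimates: matrix Bernstein needs a deterministic bound on $\theta^k$ to control the ranges of the i.i.d.\ summands $\nabla Q(s,a;\theta^k)\nabla Q(s,a;\theta^k)^{\!\top}$ and $\delta^k(\xi)\nabla Q(s,a;\theta^k)$, yet that bound is itself a consequence of the convergence statement concentration is meant to establish. This is untangled by a stepwise induction that conditions on the good concentration events $\mathcal{E}_0,\dots,\mathcal{E}_{k-1}$, deduces $\theta^k$ lies in the radius-$R$ ball, on that ball obtains the deterministic constants making $\mathcal{E}_k$ high-probability, and only then applies the union bound. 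A secondary burden is verifying, at the \emph{specified} width $m$, that all the NTK perturbation inequalities (Gram-matrix lower bound over the whole ball, smallness of the ReLU linearization remainder, boundedness of $\|J_Q(\theta)\|$ and $\|\hat d^k\|$) hold simultaneously with the budgeted probability --- it is precisely this simultaneous requirement, together with the $(1-\gamma)^{-1}$ scaling of $\|Q^\pi\|_\infty$, that forces the cubic dependence on $|\mathcal{S}||\mathcal{A}|$ and the $(1-\gamma)^{-2}$ in the width.
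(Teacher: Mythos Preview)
Your high-level strategy coincides with the paper's: induction on $k$ keeping $\theta^k$ in a ball around $\theta^0$, NTK stability on that ball, matrix/vector concentration for $\hat H^k$ and $\hat g^k$ with per-iteration failure $\mathcal{O}(\delta/K)$, a one-step contraction with additive error, and unrolling. The paper likewise controls $\|\theta^{k+1}-\theta^0\|$ through the geometric decay of $\|Q^k-TQ^k\|_\mu$ together with the Gram lower bound, and handles the rank-deficient $(\hat H^k+\omega I)^{-1}$ via Sherman--Morrison--Woodbury exactly as in the over-parameterized linear analysis (Lemma~\ref{lemma:lin-over-paramerterization-1-step}); your plan glosses over this last step, which is where the $\omega$ and $\sqrt{\log(K/\delta)/N}$ terms acquire their precise form.

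There is one genuine technical gap. Your linearization bound
$\|Q(\theta^{k+1})-Q(\theta^k)-J_Q(\theta^k)(\theta^{k+1}-\theta^k)\|_\mu=\mathcal{O}\big(m^{-1/2}\|\theta^{k+1}-\theta^k\|^2\big)$
presumes $Q(s,a;\cdot)$ is smooth in $\theta$, but the network \eqref{eq:neural-network} uses ReLU, whose Jacobian is discontinuous; no such quadratic Taylor remainder is available. The paper instead writes the remainder as $\int_0^1 U^{1/2}\big(J_Q(\theta(s))-J_Q(\theta^k)\big)(\theta^{k+1}-\theta^k)\,ds$ and invokes the Jacobian-stability estimate $\|J_\mu(\theta)-J_\mu^0\|_2\le\tfrac{C}{3}\sqrt{\lambda_{\min}(G^0)}$ with $C=\mathcal{O}(m^{-1/6})$ uniformly on the ball (the same lemma behind Theorem~\ref{theorem:pop-neural-network}). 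This gives a remainder bounded by $\beta C\,\|TQ^k-Q^k\|_\mu$, \emph{linear} in the Bellman residual with a small prefactor, and the width condition enforces $C\le\tfrac{1-\gamma}{2(1+\gamma)}$, which is exactly what degrades the rate from $1-(1-\gamma)\beta$ to $1-\tfrac{(1-\gamma)\beta}{2}$. Your conclusion that the nonlinearity is absorbed into the contraction constant is correct, but the mechanism you invoke does not apply to ReLU and must be replaced by this Jacobian-stability argument.
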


Consistent with neural network approximation, we set the $\varepsilon$-optimal solution of GNTD to be $\|Q(\theta^K)-Q^\pi\|_\mu\leq \sqrt{\varepsilon}$. Then
Theorem \ref{theorem:sto-neural-netwrk} indicates 
that under proper choice of the parameters, our method finds an $\varepsilon$-optimal solution within a sample complexity of $\tilde{\mathcal{O}}\left(\varepsilon^{-1}\right)$, which is better than the $\mathcal{O}\left(\varepsilon^{-2}\right)$ sample complexity of neural TD \cite{cai2024neural/nonasy, xu2020finite/nonasy}. Our results are also better than the $\mathcal{O}\left(\varepsilon^{-\frac{2}{2-a}}\right), a\in(0,1]$ complexity of \cite{sun2022finite} while making no additional assumptions. 

\subsection{Smooth Function Approximation}
\label{sect:smooth-conv}
In this subsection, we consider the smooth function approximation that satisfies the following properties.
\begin{assumption}
\label{assumption:smooth}
    For all $(s,a)$, the function $Q(s,a;\cdot)$ is uniformly bounded by $M$, $L_1$-Lipschitz, and $L_2$-smooth: 
    $$\,\,\,\,\|Q(s,a;\theta_1)-Q(s,a;\theta_2)\|\leq L_1\|\theta_1-\theta_2\|,\qquad\, \forall \theta_1,\theta_2,$$
    $$\| \nabla Q(s,a;\theta_1)- \nabla Q(s,a;\theta_2)\|\leq L_2\|\theta_1-\theta_2\|, \quad \forall \theta_1,\theta_2.$$ 
\end{assumption}
Analogous to Assumption \ref{assumption:feature-distance}, we make the following requirement for the Jacobian matrix  $J_Q(\theta)$.
\begin{assumption}
\label{assumption:parameter-curve-min-eig}
$\exists\lambda_0>0$ such that for any $\theta$, we have $J_Q(\theta)^{\!\top}  U J_Q(\theta)\succeq \lambda_0 I$, where $U:=\text{diag}(\mu)$ is diagonal.
\end{assumption}

When $Q(\cdot)$ is linear, Assumption \ref{assumption:parameter-curve-min-eig} reduces to Assumption \ref{assumption:feature-distance}. It actually implies that the objective function $L(\theta^k,d)$ (cf. \eqref{eq:population-direction}) of the $d^k$ subproblem of the population update \eqref{eq:population} is $2\lambda_0$-strongly convex. Let us define the worst-case optimal fitting error of $L(\theta,d)$ over the parameter space $\mathcal{F}$ of $\theta$ as
\begin{equation}
    \label{eq:fitted-error}
    \varepsilon_{\mathcal{F}}^2=\max_{\theta\in \mathcal{F}}\min_{d} L(\theta,d)< \infty,
\end{equation}
then we have the following theorem.
\begin{theorem}
\label{theorem:sto-smooth}
Suppose Assumptions \ref{assumption:stationary}, \ref{assumption:smooth}, and \ref{assumption:parameter-curve-min-eig} hold. If we set $\beta,\omega\in (0,1)$,
then w.p. $1-\delta$, the output $\theta^K$ of Algorithm \ref{alg:GNTD} satisfies
$$
\begin{aligned}
    \|Q(\theta^K)-Q^\pi\|_\mu \leq&\  (1-(1-\gamma)\beta)^K\|Q(\theta^0)-Q^\pi\|_\mu \\ 
    &\ +\frac{C_3}{1-\gamma} \left(\omega+\sqrt{\frac{\log (2K/\delta)}{N}}+\varepsilon_{\mathcal{F}}+\beta\right)
\end{aligned}
$$
for a given constant $C_3>0$.  
\end{theorem}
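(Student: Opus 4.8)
The plan is to mimic the population analysis of Theorem \ref{theorem:pop-smooth} and treat the stochastic GNTD update \eqref{eq:stochastic} as a perturbed version of the population update \eqref{eq:population}, with the perturbation controlled by (i) the damping parameter $\omega$ and (ii) the statistical error in approximating the population subproblem \eqref{eq:population-direction} by the empirical subproblem \eqref{eq:stochastic-direction}. First I would recall the one-step contraction that underlies Theorem \ref{theorem:pop-smooth}: if $d^k$ exactly solved the population subproblem, then using Assumption \ref{assumption:smooth} to Taylor-expand $Q(\theta^k+\beta d^k)$ and Assumption \ref{assumption:parameter-curve-min-eig} to control $\|d^k\|$ in terms of the fitting residual, one gets $Q(\theta^{k+1}) = \beta\, T Q(\theta^k) + (1-\beta) Q(\theta^k) + e^k$ with $\|e^k\|_\mu \le \varepsilon_{\mathcal{F}} + \beta M_1$ (the $\varepsilon_{\mathcal{F}}$ term coming from the worst-case fitting error \eqref{eq:fitted-error}, the $\beta M_1$ from the second-order Taylor remainder). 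Combining with the $\gamma$-contraction of $T$ in $\|\cdot\|_\mu$ yields the recursion $\|Q(\theta^{k+1})-Q^\pi\|_\mu \le (1-(1-\gamma)\beta)\|Q(\theta^k)-Q^\pi\|_\mu + (\varepsilon_{\mathcal F}+\beta M_1)$, which telescopes to the population bound.

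The core new work is bounding $\|\hat d^k - d^k\|$, the gap between the empirical damped direction from \eqref{eq:stochastic-direction} and the population direction $d^k=-[H(\theta^k)]^{-1}g(\theta^k)$. I would split this into two pieces: the damping bias $\|[H(\theta^k)+\omega I]^{-1}g(\theta^k) - [H(\theta^k)]^{-1}g(\theta^k)\| \le \frac{\omega}{\lambda_0}\|d^k\|$, which is $\mathcal O(\omega)$ since $H(\theta^k)\succeq\lambda_0 I$ by Assumption \ref{assumption:parameter-curve-min-eig} and $\|d^k\|$ is bounded; and the sampling error, for which I would invoke ProxBoost \cite{davis2021low/proxboost} applied to the strongly convex ERM subproblem \eqref{eq:stochastic-direction} — its objective is $2\lambda_0$-strongly convex with bounded, Lipschitz, smooth components by Assumptions \ref{assumption:smooth} and \ref{assumption:parameter-curve-min-eig}, so ProxBoost gives $\|\hat d^k - d^k\| \le M_2'\sqrt{\log(K/\delta)/N}$ with high probability for each $k$, and a union bound over $k=0,\dots,K-1$ absorbs the factor $K$ inside the log. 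Propagating this direction error through one GNTD step (again via the Lipschitz/smoothness of $Q$) adds an extra term of order $M_2[\omega^{1/2} + \sqrt{\log(K/\delta)/N}]$ to the per-step error $e^k$; note the $\omega^{1/2}$ rather than $\omega$ arises because the damped ERM objective, being $(\omega/2)\|d\|^2$-regularized, contributes a $\sqrt{\omega}$-scale statistical fluctuation in the ProxBoost bound rather than a pure bias. Telescoping the resulting recursion with rate $(1-(1-\gamma)\beta)$ and geometric-sum bound $\sum_k (1-(1-\gamma)\beta)^k \le \frac{1}{(1-\gamma)\beta}\cdot\frac{1}{\text{—}}$... more precisely dividing the accumulated per-step error by $(1-\gamma)$, gives exactly the stated bound with accumulated error $\frac{1}{1-\gamma}\big(\varepsilon_{\mathcal F} + 4\beta M_1 + M_2[\omega^{1/2}+\sqrt{\log(K/\delta)/N}]\big)$.

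A subtlety I would be careful about is that, unlike the under-parameterized linear case (Theorem \ref{theorem:sto-lin-under-parameterization}) where one must inductively maintain a uniform bound on $\{\theta^k\}$ to apply concentration, here Assumption \ref{assumption:smooth} already gives uniform boundedness of $Q(s,a;\cdot)$ and $\nabla Q(s,a;\cdot)$ over the whole parameter space, so the concentration/ProxBoost bounds hold uniformly without an inductive boundedness argument — this is why the theorem statement carries no smallness requirement on $\varepsilon$ and no sample-size prescription in its hypotheses. The main obstacle is the ProxBoost step: one must verify that \eqref{eq:stochastic-direction} fits ProxBoost's assumptions (strong convexity with the right modulus after damping, boundedness of the stochastic gradients $\nabla Q(s,a;\theta^k)^\top d + \delta^k(\xi)$ times $\nabla Q$ on the relevant domain of $d$) and then correctly convert ProxBoost's guarantee on the function-value gap or the iterate gap into a bound on $\|\hat d^k - d^k\|$ using strong convexity. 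The rest — Taylor expansion, operator-norm perturbation of matrix inverses, and the telescoping geometric sum — is routine given Theorem \ref{theorem:pop-smooth}.
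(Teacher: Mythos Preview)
Your high-level plan is sound and close to the paper's: reduce to the population one-step contraction of Theorem \ref{theorem:pop-smooth} and control the extra error induced by damping and sampling via ProxBoost, then telescope. Your observation that Assumption \ref{assumption:smooth} already furnishes uniform bounds on $Q$ and $\nabla Q$, so no inductive boundedness argument is needed, is correct and matches the paper.

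The paper, however, takes a slightly different route at the key step. Rather than bounding the \emph{iterate} gap $\|\hat d^k-d^k\|$ and propagating it through Lipschitz constants, the paper bounds the \emph{function value} $L(\theta^k,\hat d^k)$ directly. The point is that the one-step error is exactly
\[
\|Q^{k+1}-T_\beta Q^k\|_\mu \le \beta\,\|J^k\hat d^k+\delta^k\|_\mu + \beta^2 L_2\|\hat d^k\|^2
= \beta\,L(\theta^k,\hat d^k)^{1/2} + \mathcal O(\beta^2),
\]
so controlling $L(\theta^k,\hat d^k)$ is the natural quantity. The paper shows, via ProxBoost on the damped constrained subproblem and the trivial inequality $L\le L^\omega\le L+\tfrac{\omega}{2}\|d\|^2$, that
\[
L(\theta^k,\hat d^k)\le L(\theta^k,d^k)+M_2^2\Big[\omega+\tfrac{\log(K/\delta)}{N}\Big],
\]
and then takes a square root to get $L(\theta^k,\hat d^k)^{1/2}\le \varepsilon_{\mathcal F}+M_2[\omega^{1/2}+\sqrt{\log(K/\delta)/N}]$.

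This is where your explanation goes wrong. The $\omega^{1/2}$ in the statement does \emph{not} come from a ``$\sqrt\omega$-scale statistical fluctuation'' in ProxBoost; it comes simply from taking the square root of an $\mathcal O(\omega)$ function-value bias. Your own decomposition gives an iterate-level damping bias of order $\omega$, and the ProxBoost iterate error (after converting the function-value guarantee via $2\lambda_0$-strong convexity) is $\mathcal O(\sqrt{\log(K/\delta)/N})$; propagating $\|\hat d^k-d^k\|=\mathcal O(\omega+\sqrt{\log/N})$ through $\|J^k\cdot\|_\mu\le L_1\|\cdot\|$ would yield a per-step error of order $\omega$, not $\omega^{1/2}$. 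That is actually \emph{stronger} than the stated bound (since $\omega\le\omega^{1/2}$ for $\omega\in(0,1)$), so your approach would still prove the theorem---but the mismatch between your decomposition and your claimed $\omega^{1/2}$ indicates the mechanism is not understood. Also note a small slip: ProxBoost is applied to the \emph{damped} empirical problem, so its output is compared to $d^k_\omega=\arg\min_d L^\omega(\theta^k,d)$, not to $d^k$; the $\omega$-bias enters separately through $\|d^k_\omega-d^k\|$ (or, in the paper's version, through $L^\omega-L$).
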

By choosing the step size $\beta=\mathcal{O}(\varepsilon^{0.5})$ and the sample size $N=\mathcal{O}(\varepsilon^{-1})$, stochastic GNTD can output an 
$\varepsilon$-accurate Q-function approximator with  $\tilde{\mathcal{O}}(\varepsilon^{-0.5})$ iterations and in total $\tilde{\mathcal{O}}(\varepsilon^{-1.5})$ samples.
To our best knowledge, there is no explicit finite-time convergence analysis of TD or FQI for general smooth functions in terms of finding $\varepsilon$-optimal solutions. Existing results, such as GTD2 and TDC \cite{xu2021sample/smoothgtd, wang2021non/smoothgtd}, only analyze the complexity for finding $\varepsilon$-stationary points.

\section{Proof of Section \ref{sect:convergence}}
\label{sect:proof} 
In this section, we present detailed proofs of the finite-time analysis for GNTD under linear, neural and nonlinear smooth approximation.

\subsection{Proof of Theorem \ref{theorem:sto-lin-under-parameterization}}
\label{sect:lin-proof}
To simplify the notation, we rewrite $Q^k=Q(\theta^k)$ and $\delta^k=Q^k-T^\pi Q^k$. Before starting the analysis of stochastic GNTD with linear approximation, we introduce a few notation. In the $k$-th iteration, we obtain a batch of data tuples of the form $\xi=(s,a,r,s')$, we call this set of data tuples as $\mathcal{D}_k$. 
For each $\xi\in\mathcal{D}_k$, we define $\hat{g}(\theta^k,\xi)=\delta^k(\xi) \cdot \nabla Q(s,a;\theta^k)$. Consequently, the semi-gradient estimator $\hat{g}^k$ defined in \eqref{eq:stochastic} can also be written as $\hat{g}^k=\frac{1}{|\mathcal{D}_k|}\sum_{\xi\in\mathcal{D}_k}\hat{g}(\theta^k,\xi)$. Let $\hat{\mu}^k$ be the empirical estimator of $\mu$ based on the dataset $\mathcal{D}_k$, then the stochastic estimator  $\hat{H}^k$ defined in \eqref{eq:stochastic} can be equivalently written as $\hat{H}^k=\hat{H}(\theta^k)=(J^k)^\top \hat{U}^k J^k$, where $\hat{U}^k=\text{diag}(\hat{\mu}^k)$.


Recalling the population update \eqref{eq:population} and the stochastic update \eqref{eq:stochastic} of GNTD, We define $Q^{k}_*=Q(\theta^k_*)$. The following three lemmas employ $Q^k_*$ as an intermediary to derive an upper bound for $\|Q^k-Q^*\|_\mu$.

\begin{lemma}
\label{lemma:pop-semi-grad-bound}
(\cite{bhandari2018finite/nonasy}) Under Assumption \ref{assumption:stationary}, for given $k$, we have
$$
(\theta^*-\theta^k)^\top g(\theta^k)\leq -(1-\gamma)\|Q^*-Q^k\|_\mu^2\qquad\mbox{and}\qquad \|g(\theta^k)\|_2\leq (1+\gamma)\|Q^*-Q^k\|_\mu.
$$
\end{lemma}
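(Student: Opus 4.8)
The statement to prove is Lemma~\ref{lemma:pop-semi-grad-bound}, which asserts two bounds on the expected TD direction $g(\theta^k) = \Phi^\top U \delta^k$ where $\delta^k = Q^k - TQ^k$: a lower bound on the inner product $(\theta^*-\theta^k)^\top g(\theta^k)$ and an upper bound on $\|g(\theta^k)\|_2$. Since this is attributed to \cite{bhandari2018finite/nonasy}, the plan is to reproduce the classical argument.

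\textbf{Plan.} First I would rewrite the inner product in terms of $Q$-functions rather than parameters. Since $g(\theta^k) = \Phi^\top U(Q^k - TQ^k)$ and $Q^* - Q^k = \Phi(\theta^*-\theta^k)$, we get
$$
(\theta^*-\theta^k)^\top g(\theta^k) = (\theta^*-\theta^k)^\top \Phi^\top U (Q^k - TQ^k) = \langle Q^* - Q^k,\, Q^k - TQ^k\rangle_\mu.
$$
Next I would use the key fact that $Q^*$ satisfies the projected Bellman equation $Q^* = \Pi_\mu T Q^*$, so that $\langle Q^* - TQ^*, \, \Phi x\rangle_\mu = 0$ for all $x$; in particular $\langle Q^* - TQ^*,\, Q^* - Q^k\rangle_\mu = 0$ since $Q^*-Q^k \in \text{Span}\{\Phi\}$. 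Adding this zero term, I rewrite
$$
\langle Q^* - Q^k,\, Q^k - TQ^k\rangle_\mu = \langle Q^*-Q^k,\, (Q^k - TQ^k) - (Q^* - TQ^*)\rangle_\mu = -\|Q^*-Q^k\|_\mu^2 + \langle Q^*-Q^k,\, TQ^* - TQ^k\rangle_\mu.
$$
Then Cauchy-Schwarz plus the $\gamma$-contraction of $T$ in $\|\cdot\|_\mu$ (Assumption~\ref{assumption:stationary}) gives $\langle Q^*-Q^k, TQ^*-TQ^k\rangle_\mu \le \|Q^*-Q^k\|_\mu \cdot \gamma\|Q^*-Q^k\|_\mu$, yielding the desired bound $-(1-\gamma)\|Q^*-Q^k\|_\mu^2$.

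For the norm bound, I would write $\|g(\theta^k)\|_2 = \|\Phi^\top U \delta^k\|_2$ and again insert the projected Bellman equation: since $\Phi^\top U (Q^* - TQ^*) = 0$, we have $g(\theta^k) = \Phi^\top U\big((Q^k - TQ^k) - (Q^* - TQ^*)\big) = \Phi^\top U\big((Q^k - Q^*) - (TQ^k - TQ^*)\big)$. Using $\|\Phi^\top U v\|_2 = \|\Phi^\top U^{1/2} (U^{1/2} v)\|_2 \le \|\Phi^\top U^{1/2}\|_{op} \|v\|_\mu$, and the assumption $\|\phi(s,a)\| \le 1$ which gives $\|\Phi^\top U^{1/2}\|_{op} \le 1$ (since $\Phi^\top U \Phi \preceq I$), I bound $\|g(\theta^k)\|_2 \le \|(Q^k-Q^*) - (TQ^k - TQ^*)\|_\mu \le \|Q^k - Q^*\|_\mu + \gamma\|Q^k-Q^*\|_\mu = (1+\gamma)\|Q^*-Q^k\|_\mu$ by the triangle inequality and contraction.

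\textbf{Main obstacle.} There is no serious obstacle here — the argument is short and standard. The only point requiring care is the correct use of the projected Bellman equation (the orthogonality $\Phi^\top U(Q^* - TQ^*) = 0$, equivalently $Q^* = \Pi_\mu T Q^*$), which is what makes both the $-(1-\gamma)$ factor and the $(1+\gamma)$ factor appear cleanly; without it one would only get a bound involving $\|Q^*-TQ^*\|_\mu$, the irreducible approximation error, rather than a clean contraction-type estimate. I would also double-check the operator-norm bound $\|\Phi^\top U^{1/2}\|_{op}\le 1$ follows from $\|\phi(s,a)\|\le 1$, since that is exactly where the feature-normalization assumption enters.
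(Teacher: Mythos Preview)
Your proposal is correct and is essentially the standard argument from \cite{bhandari2018finite/nonasy}; the paper itself does not supply a proof of this lemma but simply cites that reference. Your use of the orthogonality $\Phi^\top U(Q^*-TQ^*)=0$ (from the projected Bellman equation), followed by Cauchy--Schwarz and the $\gamma$-contraction of $T$ under $\|\cdot\|_\mu$, is exactly the intended route, and your verification that $\|\Phi^\top U^{1/2}\|_{op}\le 1$ via $\lambda_{\max}(\Sigma)\le 1$ under the feature normalization $\|\phi(s,a)\|\le 1$ is the right justification for the second bound.
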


\begin{lemma}
\label{lemma:pop-gap-1-step}
Under Assumption \ref{assumption:cov-min-eig}, for given $k$, we have
$$
\|Q^{k+1}_*-Q^*\|_\mu^2\leq  \left(1-2(1-\gamma)\beta+\frac{4\beta^2}{\lambda_0}\right)\|Q^k-Q^*\|_\mu^2.
$$
\end{lemma}

\begin{proof}
Recall the update formula from \eqref{eq:population}, then we have 
\begin{eqnarray*}
    \|Q^{k+1}_*-Q^*\|_\mu^2&=&\|\Phi \theta^{k+1}_*-\Phi \theta^*\|_\mu^2 \\
    &=&\|\Phi \theta^k-\Phi\theta^*\|_\mu^2+\beta^2\|\Phi d^k_*\|_\mu^2+2\beta \left<\Phi (\theta^k-\theta^*),\Phi d^k_*\right>_\mu \\
    &=& \|Q^k-Q^*\|_\mu^2+\beta^2g(\theta^k)^\top (\Phi^\top U\Phi)^{-1}\Phi^\top U\Phi(\Phi^\top U\Phi)^{-1}g(\theta^k)-2\beta (\theta^k-\theta^*)^\top g(\theta^k)  \\
    &=&\|Q^k-Q^*\|_\mu^2+\beta^2\|g(\theta^k)\|_{(\Phi^\top U\Phi)^{-1}}^2-2\beta (\theta^k-\theta^*)^\top g(\theta^k).
\end{eqnarray*}
By Assumption \ref{assumption:cov-min-eig} and Lemma \ref{lemma:pop-semi-grad-bound}, we have 
\begin{eqnarray*}
    \|Q^{k+1}-Q^*\|_\mu^2&\leq & \|Q^k-Q^*\|_\mu^2+\frac{\beta^2}{\lambda_0}\|g(\theta^k)\|^2-2\beta(1-\gamma)\|Q^k-Q^*\|^2_\mu \\
    &\leq & \left(1-2(1-\gamma)\beta+\frac{4\beta^2}{\lambda_0}\right)\|Q^k-Q^*\|_\mu^2.
\end{eqnarray*}
\end{proof}

\begin{lemma}
\label{lemma:sto-gap-1-step}
We define $\tau_1^2=9(1+\gamma^2)\|\theta^*\|^2+9R_{\max}^2+\frac{12(1+\gamma)^2\|Q^0-Q^*\|_\mu^2}{\lambda_0}$ and set $\varepsilon\ll \|Q^0-Q^*\|_\mu,\beta=\frac{(1-\gamma)\lambda_0}{4}$.
Then for any $\delta\in(0,1)$,
under Assumptions \ref{assumption:stationary} and \ref{assumption:cov-min-eig}, we have $\|Q^k-Q^*\|_\mu\leq \|Q^0-Q^*\|_\mu$. Furthermore, there exists a constant $C_1$ w.r.t. $\tau_1,\gamma,\lambda_0$ such that
$$
\|Q^k-Q^*\|_\mu \leq \left(1-\frac{(1-\gamma)^2\lambda_0}{8}\right)^{k+1}\|Q^0-Q^*\|_\mu+C_1\left(\omega+\sqrt{\frac{\log (K/\delta)}{N}}\right)
$$
w.p. $1-\delta$ for any $1\leq k\leq K$.
\end{lemma}

\begin{proof}
We prove this lemma by induction. Initially, it is evident for $k=0$. Subsequently, we consider the case where it holds for $k\leq t$. We then contemplate the case for $t+1$.  It is noteworthy that for any $\theta$ and tuple $\xi$, we have
\begin{eqnarray}
\label{eq:theta-0-bound}
\|\hat{g}(\theta,\xi)\|^2 &=& \|\phi(s,a)(Q(s,a;\theta)-r(s,a)-\gamma Q(s',a';\theta))\|^2 \nonumber\\
& \overset{(i)}{\leq} & 3\left(Q(s,a;\theta^*)-r(s,a)-\gamma Q(s',a';\theta^*)\right)^2+3\left(Q(s,a;\theta)-Q(s,a;\theta^*)\right)^2\nonumber\\
& & +3\gamma^2\left(Q(s',a';\theta)-Q(s',a';\theta^*)\right)^2 \\
& \overset{(ii)}{\leq} & 9(1+\gamma^2)\|\theta^*\|^2+9R_{\max}^2+3(1+\gamma^2)\|\theta-\theta^*\|^2 \nonumber\\
& \overset{(iii)}{\leq} & 9(1+\gamma^2)\|\theta^*\|^2+9R_{\max}^2+\frac{12(1+\gamma)^2\|Q(\theta)-Q^*\|_\mu^2}{\lambda_0},\nonumber
\end{eqnarray}
where (i) and (ii) are both due to the triangle inequality $\|x+y+z\|^2\leq 3(\|x\|^2+\|y\|^2+\|z\|^2)$, and (iii)is due to $\|\theta-\theta^*\|\leq \frac{1}{\sqrt{\lambda_0}}\|Q(\theta)-Q^*\|_\mu$. This means that 
$\sigma_k:=\mbox{Var}(\hat{g}(\theta^k,\xi)\mid \theta^k)\leq \max_\xi \|\hat{g}(\theta^k,\xi)\|^2\leq \tau_1^2$ (replacing $\theta$ with $\theta^k$ in \eqref{eq:theta-0-bound} when $0\leq k\leq t$).

Next, we compute 
\begin{eqnarray}
\label{eq:lin-d*-dk}
\|d^k-d^k_*\|&=& \left\|\left(\hat{H}^k+\omega I\right)^{-1}\hat{g}^k - \left(\Phi^\top U\Phi\right)^{-1}g^k\right\| \\
&\leq &\left\|\left(\hat{H}^k+\omega I\right)^{-1}\hat{g}^k - \left(\Phi^\top U\Phi\right)^{-1}\hat{g}^k\right\|+\left\| \left(\Phi^\top U\Phi\right)^{-1}(\hat{g}^k-g^k)\right\|. \nonumber
\end{eqnarray}
According to Assumption \ref{assumption:cov-min-eig} and Lemma \ref{lemma:matrix-bernstein}, if for any $\eta\in(0,1)$, sample size $|\mathcal{D}_k|\geq\frac{6}{\eta^2}\log \frac{2d}{\delta_0}$, we have
\begin{equation}
\label{eq:lin-under-gn-matrix-approx}
\lambda_{\min}(\Phi^{\top}\hat{U}^k\Phi)\geq \lambda_{\min}(\Phi^{\top}U \Phi)-\|\Phi^{\top}\hat{U}^k\Phi-\Phi^{\top} U \Phi\|_2\geq \lambda_0-\eta,\quad w.p.\ 1-\delta_0.
\end{equation}
Let $\eta=\omega$. Thus we have w.p. $1-\delta_0$ that 
\begin{eqnarray}
\label{eq:lin-under-matrix-1}
    &\ &\left\|(\hat{H}^k+\omega I)^{-1}-\left(\Phi^{\top} U \Phi\right)^{-1}\right\|_2 \nonumber\\
    &=&\Big\|\big(\Phi^{\top} U \Phi\big)^{-1}\big(\Phi^{\top}\hat{U}^k\Phi+\omega I-\Phi^{\top} U \Phi\big)\big(\Phi^{\top}\hat{U}^k\Phi+\omega I\big)^{-1}\Big\|_2  \\
    &\leq &\left\|\left(\Phi^{\top} U \Phi\right)^{-1}\right\|_2\cdot \left\|\Phi^{\top}\hat{U}^k\Phi+\omega I-\Phi^{\top} U \Phi\right\|_2\cdot\Big\|\left(\Phi^{\top}\hat{U}^k\Phi+\omega I\right)^{-1}\Big\|_2 \nonumber \\
    &\overset{(i)}{\leq} & \frac{1}{\lambda_0}\cdot(\eta+\omega)\cdot \frac{1}{\lambda_0+\omega-\eta} 
    = \frac{1}{\lambda_0^{2}}(\eta+\omega), \nonumber
\end{eqnarray}
where (i) is due to \eqref{eq:lin-under-gn-matrix-approx} and Assumption \ref{assumption:cov-min-eig}.
Therefore, for the first term of \eqref{eq:lin-d*-dk}, we have w.p. $1-\delta_0$ that 
\begin{eqnarray}
\label{eq:lin-err-1}
&\ &\left\|\left(\hat{H}^k+\omega I\right)^{-1}\hat{g}^k - \left(\Phi^\top U\Phi\right)^{-1}\hat{g}^k\right\|\\
&\leq& \left\|\left(\hat{H}^k+\omega I\right)^{-1} - \left(\Phi^\top U\Phi\right)^{-1}\right\|\cdot \|\hat{g}^k\| \leq \frac{2\tau_1\omega}{\lambda_0^{2}}, \nonumber
\end{eqnarray}

Next, by Lemma \ref{lemma:vector-bernstein}, we have
$$
\|g(\theta^k)-\hat{g}^k\|\leq \sigma_k\sqrt{\frac{3\log (1/\delta_0)}{|\mathcal{D}_k|}},\qquad w.p.\ 1-\delta_0.
$$
For the second term of \eqref{eq:lin-d*-dk}, we have
\begin{eqnarray}
\label{eq:lin-err-2}
\left\| \left(\Phi^\top U\Phi\right)^{-1}(\hat{g}^k-g^k)\right\|&\leq& \frac{\sigma_k}{\lambda_0}\cdot\sqrt{\frac{3\log (1/\delta_0)}{|\mathcal{D}_k|}}\leq \frac{\tau_1}{\lambda_0}\cdot\sqrt{\frac{3\log (1/\delta_0)}{|\mathcal{D}_k|}}
\end{eqnarray}
w.p. $1-\delta_0$. Plugging \eqref{eq:lin-err-1} and \eqref{eq:lin-err-2} into \eqref{eq:lin-d*-dk} yields that
$$
\|d^k-d^k_*\|\leq \frac{2\tau_1\omega}{\lambda_0^{2}}+\frac{\tau_1}{\lambda_0}\cdot\sqrt{\frac{3\log (1/\delta_0)}{|\mathcal{D}_k|}}.
$$

Now we choose $\beta=\frac{(1-\gamma)\lambda_0}{4}$. According to the estimation of $\|d^k-d^k_*\|$, we have
\begin{eqnarray*}
\|Q^{k+1}-Q^*\|_\mu&\leq& \|Q^{k+1}-Q^{k+1}_*\|_\mu-\|Q^{k+1}_*-Q^*\|_\mu\\
&\overset{(i)}{\leq}& \|d^k-d^k_*\|+\left(1-\frac{(1-\gamma)^2\lambda_0}{8}\right)\|Q^k-Q^*\|_\mu,
\end{eqnarray*}
where (i) is due to Lemma \ref{lemma:pop-gap-1-step}, the estimation of $\|d^k-d^k_*\|$ and the $Q$-function is 1-Lipschitz w.r.t. $\mu$-norm. 
Choose $\delta_0=\frac{\delta}{2K}$ and the sample size $|\mathcal{D}_k|=N\geq\frac{6}{\omega^2}\log \frac{2d}{\delta_0}$, according to the inductive hypothesis and setting $k=t$, we have
\begin{eqnarray*}
\|Q^{t+1}-Q^*\|_\mu&\leq& \|d^t-d^t_*\|+\left(1-\frac{(1-\gamma)^2\lambda_0}{8}\right)\|Q^t-Q^*\|_\mu \\ 
&\overset{(i)}{\leq}& \frac{2\tau_1\omega}{\lambda_0^{2}}+\frac{\tau_1}{\lambda_0}\cdot\sqrt{\frac{3\log (2K/\delta)}{N}}+\left(1-\frac{(1-\gamma)^2\lambda_0}{8}\right)\|Q^0-Q^*\|_\mu,
\end{eqnarray*}
where (i) is due to \eqref{eq:lin-err-1}.
Choose $\omega=\mathcal{O}(\varepsilon^{0.5}),N=(\varepsilon^{-1}),K=\mathcal{O}(\varepsilon^{-0.5})$, and let $\varepsilon$ be sufficiently small such that 
$$
\frac{2\tau_1\omega}{\lambda_0^{2}}+\frac{\tau_1}{\lambda_0}\cdot\sqrt{\frac{3\log (2K/\delta)}{N}}\leq \frac{(1-\gamma)^2\lambda_0}{16}\|Q^0-Q^*\|_\mu,
$$
which ensures $\|Q^{t+1}-Q^*\|_\mu\leq \|Q^0-Q^*\|_\mu$. Then there exists a constant $C_1$ such that 
\begin{eqnarray*}
\|Q^{t+1}-Q^*\|_\mu&\leq&  \frac{C_1(1-\gamma)^2\lambda_0}{8}\left(\omega+\sqrt{\frac{\log (K/\delta)}{N}}\right)+\left(1-\frac{(1-\gamma)^2\lambda_0}{8}\right)\|Q^0-Q^*\|_\mu\\
&\leq& \left(1-\frac{(1-\gamma)^2\lambda_0}{8}\right)^{t+1}\|Q^0-Q^*\|_\mu+C_1\left(\omega+\sqrt{\frac{\log (K/\delta)}{N}}\right).
\end{eqnarray*}
Hence, it is valid for $k=t+1$, which concludes our proof by induction.
\end{proof}

Prior to this, our focus was on estimating $\|Q^k-Q^*\|_\mu$. Now, we present the relationship between $Q^*$ and $Q^\pi$, thereby aiding the proof of Theorem \ref{theorem:sto-lin-under-parameterization}.

\begin{lemma}
\label{lemma:gap-between-Q*-Qpi} (\cite{cai2024neural/nonasy})
Under Assumption \ref{assumption:stationary}, 
$
\|Q^*-Q^\pi\|_\mu \leq \frac{1}{1-\gamma} \|\Pi_\mu Q^\pi-Q^\pi\|_\mu.
$
\end{lemma}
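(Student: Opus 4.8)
The plan is to combine the two fixed-point characterizations available to us with the contraction property of $T$ from Assumption \ref{assumption:stationary}. The relevant facts are: the true Q-function satisfies the Bellman equation $Q^\pi = TQ^\pi$; the optimal linear approximator satisfies the projected Bellman equation $Q^* = \Pi_\mu TQ^*$ (this is exactly how $\theta^*$ was defined); $\Pi_\mu$ is the $\mu$-orthogonal projection onto $\{\Phi x\}$ and is therefore non-expansive in $\|\cdot\|_\mu$; and $T$ is a $\gamma$-contraction in $\|\cdot\|_\mu$. The strategy is to insert the intermediate point $\Pi_\mu Q^\pi$ and bound the two resulting pieces by the triangle inequality.

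First I would write
$$
\|Q^*-Q^\pi\|_\mu \leq \|Q^*-\Pi_\mu Q^\pi\|_\mu + \|\Pi_\mu Q^\pi - Q^\pi\|_\mu .
$$
The second term is precisely the quantity appearing on the right-hand side of the claim, so all the work lies in controlling the first term. For this I would use the two fixed-point relations to replace $Q^*$ and $\Pi_\mu Q^\pi$ by their images under $\Pi_\mu T$: since $Q^* = \Pi_\mu TQ^*$ and $\Pi_\mu Q^\pi = \Pi_\mu TQ^\pi$ (the latter because $TQ^\pi = Q^\pi$), one obtains
$$
\|Q^*-\Pi_\mu Q^\pi\|_\mu = \|\Pi_\mu TQ^* - \Pi_\mu TQ^\pi\|_\mu \leq \|TQ^*-TQ^\pi\|_\mu \leq \gamma\|Q^*-Q^\pi\|_\mu ,
$$
where the first inequality is the non-expansiveness of the $\mu$-orthogonal projection and the second is the $\gamma$-contraction of $T$.

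Combining the two displays gives $\|Q^*-Q^\pi\|_\mu \leq \gamma\|Q^*-Q^\pi\|_\mu + \|\Pi_\mu Q^\pi - Q^\pi\|_\mu$, and rearranging using $\gamma<1$ yields the stated bound $\|Q^*-Q^\pi\|_\mu \leq \frac{1}{1-\gamma}\|\Pi_\mu Q^\pi - Q^\pi\|_\mu$. This is a classical Tsitsiklis--Van Roy-type estimate, so I do not expect a genuine obstacle; the only points requiring care are verifying that $\Pi_\mu$ is non-expansive in $\|\cdot\|_\mu$ (which holds because it is the orthogonal projection onto a subspace of the Hilbert space equipped with the inner product $\langle\cdot,\cdot\rangle_\mu$) and that the fixed-point identities $Q^\pi = TQ^\pi$ and $Q^* = \Pi_\mu TQ^*$ are invoked in the correct order so that the projection absorbs into the contraction step cleanly.
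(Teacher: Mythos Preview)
Your argument is correct and is precisely the classical Tsitsiklis--Van Roy estimate. The paper itself does not supply a proof of this lemma but simply cites it from \cite{cai2019neural/nonasy}, so there is nothing to compare against; your derivation is the standard one and would serve perfectly well as the omitted proof.
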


Now we are ready to prove Theorem \ref{theorem:sto-lin-under-parameterization}.

\begin{proof}
By Lemmas \ref{lemma:sto-gap-1-step} and \ref{lemma:gap-between-Q*-Qpi}, we have w.p. $1-\delta$ that 
\begin{eqnarray*}
    &\ &\|Q^K-Q^\pi\|_\mu \leq \|Q^K-Q^*\|_\mu+\|Q^*-Q^\pi\|_\mu \\
    &\leq& \left(1-\frac{(1-\gamma)^2\lambda_0}{8}\right)^{K}\|Q^0-Q^*\|_\mu+C_1\left(\omega+\sqrt{\frac{\log (K/\delta)}{N}}\right)+\frac{1}{1-\gamma} \|\Pi_\mu Q^\pi-Q^\pi\|_\mu.
\end{eqnarray*}
Thus we complete the proof. 
\end{proof}

\subsection{Proof of Theorem \ref{theorem:sto-neural-netwrk}}
\label{sect:nn-proof}
For neural network function approximation, we define the $\mu$-weighted Gram matrix $G^\infty \in \mathbb{R}^{|\mathcal{S}| |\mathcal{A}|\times |\mathcal{S}| |\mathcal{A}|}$. Let $\mu(s_i, a_i)$ be the $i$-th diagonal element of the diagonal matrix $U$. The $(i,j)$-th element of $G^\infty$ can be written as follows:
$$
\begin{aligned}
G^{\infty}_{ij}:=\mathbb{E}_{\theta\sim \mathcal{N}(0,\nu^2I)}&\left[\sqrt{\mu(s_i, a_i)\mu(s_j,a_j)}\phi(s_i, a_i)^{\top}\phi(s_j, a_j)\right. \\
&\left.\mathbf{1}\{\theta^{\top}\phi(s_i, a_i)\geq 0, \theta^{\top}\phi(s_j, a_j)\geq 0\}\right],
\end{aligned}
$$
where $\mathbf{1}\{\cdot\}$ is the indicator function and the expectation is taken w.r.t. the Gaussian initialization of the weights. 
According to \cite{du2018gradient/ntk}, Assumption \ref{assumption:feature-distance} ensure the positive definiteness of $G^\infty$. Let $\lambda_0$ be the minimum eigenvalue of $G^\infty$. 

Let $J^k=J_Q(\theta^k)$ be the Jacobian matrix and $J_\mu^k=U^{\frac{1}{2}}J^k$. 
We can also define the $\mu$-weighted Gram matrix in the $k$-th iteration as $G^k=J^k_\mu(J^k_\mu)^{\top}$. Let  
$
c=\frac{\min_{(s,a)}\|\sqrt{\mu(s,a)}\phi(s,a)\|}{\max_{(s,a)}\|\sqrt{\mu(s,a)}\phi(s,a)\|},
$ 
\cite{du2018gradient/ntk, zhang2019fast/ntk} suggest that $G^k\succ 0$ can be ensured by setting  the network width $m$ to be an appropriate polynomial of $|\mathcal{S}| |\mathcal{A}|$, $c$, and $\lambda_{\min}(G^\infty)^{-1}$. However, the constant $c$ is only related to $|\mathcal{S}| |\mathcal{A}|$, and does not affect the convergence rate of GNTD. Therefore, to simplify the discussion, we omit the constant $c$ in subsequent proofs. 

\begin{lemma}
\label{lemma:neural-min-eig}
(\textbf{\cite{zhang2019fast/ntk}, Lemma 6})
Suppose Assumption \ref{assumption:feature-distance} holds, then $G^\infty\succ0$. Define
$\lambda_0:=\lambda_{\min} (G^{\infty}) > 0$. If the network width $m=\Omega\left(\frac{ |\mathcal{S}| |\mathcal{A}| \log (|\mathcal{S}| |\mathcal{A}| / \delta)}{\lambda_{0}}\right)$, then we have $\lambda_{\min }(G^0) \geq$ $\frac{3}{4} \lambda_{0}$ w.p. $1-\delta$.
\end{lemma}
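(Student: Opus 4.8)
The plan is to follow the two-stage argument of \cite{du2018gradient/ntk}, adapted to the $\mu$-weighted features $x_i=\sqrt{\mu(s_i,a_i)}\,\phi(s_i,a_i)$: first establish that the limiting Gram matrix $G^\infty$ is strictly positive definite, then show that the random initial Gram matrix $G^0$ concentrates around $G^\infty$ in spectral norm once the width $m$ is large.

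\textbf{Step 1 ($G^\infty\succ0$).} I would fix an arbitrary nonzero $v\in\mathbb{R}^n$ and write $v^\top G^\infty v=\mathbb{E}_{\theta\sim\mathcal{N}(0,\nu^2 I)}\big\|z(\theta)\big\|^2$ with $z(\theta):=\sum_{i=1}^n v_i x_i\,\mathbf{1}\{\theta^\top x_i\geq 0\}$, which is manifestly $\geq 0$. Assume for contradiction it vanishes, so $z(\theta)=0$ for a.e.\ $\theta$. Assumption \ref{assumption:feature-distance} ensures $x_i\neq 0$ (because $\mu(s_i,a_i)>0$) and $x_i\nparallel x_j$ for $i\neq j$, hence the hyperplanes $H_i:=\{\theta:\theta^\top x_i=0\}$ are pairwise distinct; therefore, for each fixed $i$, there is a positive-measure region touching $H_i$ on which every indicator $\mathbf{1}\{\theta^\top x_j\geq 0\}$ with $j\neq i$ is constant. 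Comparing $z(\theta)$ on the two sides of $H_i$ inside that region (e.g.\ by reflecting $\theta$ across $H_i$, which flips only the $i$-th indicator) shows the two limiting values differ by $\pm v_i x_i$; since both are $0$, we get $v_i x_i=0$, hence $v_i=0$. Doing this for all $i$ forces $v=0$, a contradiction, so $G^\infty\succ 0$ and I set $\lambda_0:=\lambda_{\min}(G^\infty)>0$.

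\textbf{Step 2 (concentration).} With $G^0_{ij}=\frac1m\sum_{r=1}^m x_i^\top x_j\,\mathbf{1}\{(\theta^0_r)^\top x_i\geq 0,\ (\theta^0_r)^\top x_j\geq 0\}$ and $\theta^0_1,\dots,\theta^0_m$ i.i.d.\ $\mathcal{N}(0,\nu^2 I)$, we have $\mathbb{E}[G^0_{ij}]=G^\infty_{ij}$ and each summand is bounded by $\|x_i\|\|x_j\|\leq 1$ in magnitude. I would apply Hoeffding entrywise and union-bound over the $n^2$ pairs to get $\max_{i,j}|G^0_{ij}-G^\infty_{ij}|=\mathcal{O}\!\big(\sqrt{\log(n/\delta)/m}\big)$ w.p.\ $1-\delta$, hence $\|G^0-G^\infty\|_{\mathrm{op}}\leq\|G^0-G^\infty\|_F=\mathcal{O}\!\big(n\sqrt{\log(n/\delta)/m}\big)$ (a sharper matrix-concentration bound saves the extra $n$ factor and yields the stated width). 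Weyl's inequality then gives $\lambda_{\min}(G^0)\geq\lambda_0-\|G^0-G^\infty\|_{\mathrm{op}}$, so taking $m$ to be the sufficiently large polynomial in $n$, $\lambda_0^{-1}$, $\log(n/\delta)$ abbreviated by $m=\Omega(n\log(n/\delta)/\lambda_0)$ — also absorbing the feature-ratio constant $c$ flagged above — makes the perturbation $\leq\tfrac14\lambda_0$, i.e.\ $\lambda_{\min}(G^0)\geq\tfrac34\lambda_0$.

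The hard part will be Step 1: making rigorous the claim that near $H_i$ exactly one indicator jumps while the others stay frozen, which is exactly where the pairwise-non-parallel hypothesis is indispensable. Step 2 is routine (Hoeffding, a union bound, Weyl). Since the statement is quoted directly from \cite{du2018gradient/ntk}, in the writeup I would ultimately cite their theorem for the precise constant in $m$, only verifying that their proof is unaffected by replacing unit-norm inputs with the weighted features $x_i=\sqrt{\mu(s_i,a_i)}\,\phi(s_i,a_i)$.
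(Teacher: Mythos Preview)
Your proposal is correct and follows exactly the approach of \cite{du2018gradient/ntk}, which is all the paper does here as well: the paper does not supply its own proof of this lemma but simply cites it. Your two-step outline (positive definiteness of $G^\infty$ via the pairwise-non-parallel hypothesis, then entrywise Hoeffding plus a union bound and Weyl's inequality for concentration of $G^0$) is precisely the argument in \cite{du2018gradient/ntk}, and your remark that the weighting by $\sqrt{\mu(s_i,a_i)}$ does not affect the argument is the only adaptation needed.

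One minor note: the entrywise-Hoeffding route you describe in Step~2 actually yields $m=\Omega(n^2\log(n/\delta)/\lambda_0^2)$ rather than the $m=\Omega(n\log(n/\delta)/\lambda_0)$ appearing in the statement; this is also what \cite{du2018gradient/ntk} obtains in their Lemma~3.1, so the exponent in the paper's statement may be a transcription artifact rather than something your argument fails to reach.
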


\begin{lemma}
\label{lemma:neural-jacobian-bound}
(\textbf{\cite{zhang2019fast/ntk}, Lemma 7}) Suppose Assumption \ref{assumption:feature-distance} holds. For any $\theta$, denote $J_\mu:=U^{\frac{1}{2}}J_Q(\theta)$, then w.p. at least $1-\delta$, we have 
$
\|J_\mu-J_\mu^0\|_{2}^{2} \leq \frac{2 |\mathcal{S}| |\mathcal{A}| B^{2 / 3}}{\nu^{2 / 3} \delta^{2 / 3} m^{1 / 3}}
$ for all $\theta$ satisfying $\|\theta-\theta^0\|_{2} \leq B$.
\end{lemma}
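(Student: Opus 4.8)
The plan is to control the spectral norm by the Frobenius norm and then reduce the entire perturbation $J_\mu-J_\mu^0$ to a count of ReLU sign flips between $\theta$ and $\theta^0$. First I would write the Jacobian explicitly: for the two-layer network \eqref{eq:neural-network}, the block of $\nabla_\theta Q(s,a;\theta)$ corresponding to neuron $r$ equals $\frac{1}{\sqrt m}b_r\mathbf 1\{\theta_r^\top\phi(s,a)\ge 0\}\phi(s,a)$, so the $(s,a)$-th row of $J_\mu-J_\mu^0$ is $\sqrt{\mu(s,a)}$ times a vector whose $r$-th block is nonzero only when the activation of neuron $r$ on $\phi(s,a)$ changes sign between $\theta^0$ and $\theta$. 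Using $b_r^2=1$, $\|\phi(s,a)\|\le 1$, and $\|J_\mu-J_\mu^0\|_2^2\le\|J_\mu-J_\mu^0\|_F^2$, this yields
$$\|J_\mu-J_\mu^0\|_2^2\le \frac1m\sum_{(s,a)}\mu(s,a)\sum_{r=1}^m \mathbf 1\{\mathrm{sign}(\theta_r^\top\phi(s,a))\ne \mathrm{sign}((\theta_r^0)^\top\phi(s,a))\}.$$

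The core step is to bound, \emph{uniformly} over the ball $\|\theta-\theta^0\|\le R$, the number of flipped neurons for each fixed input $x=\phi(s,a)$. A flip forces $|(\theta_r^0)^\top x|\le\|\theta_r-\theta_r^0\|\,\|x\|$. Introducing a threshold $\rho>0$, I would split the neurons into a boundary set $S_\rho(x)=\{r:|(\theta_r^0)^\top x|\le\rho\|x\|\}$ and its complement: a neuron outside $S_\rho(x)$ can flip only if $\|\theta_r-\theta_r^0\|>\rho$, and since $\sum_r\|\theta_r-\theta_r^0\|^2\le R^2$ there are at most $R^2/\rho^2$ such neurons. Crucially, this split is uniform in $\theta$, because the $R^2/\rho^2$ term is deterministic while $|S_\rho(x)|$ depends only on the initialization. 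The scale-invariant thresholding of $|(\theta_r^0)^\top x|/\|x\|$ makes the subsequent step independent of $\|x\|$, so no lower bound on $\|\phi(s,a)\|$ is needed.

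I would then take expectation over the Gaussian initialization. Since $(\theta_r^0)^\top x/\|x\|\sim\mathcal N(0,\nu^2)$, Gaussian anti-concentration gives $\mathbb P(r\in S_\rho(x))\le C\rho/\nu$ for an absolute constant $C$, so $\mathbb E[|S_\rho(x)|]\le Cm\rho/\nu$ for every input. Combining the two counts, dividing by $m$, and averaging against $\mu$ (with $\sum_{(s,a)}\mu(s,a)=1$) gives $\|J_\mu-J_\mu^0\|_2^2\le \frac{R^2}{m\rho^2}+\frac1m\sum_{(s,a)}\mu(s,a)|S_\rho(\phi(s,a))|$, whose random second term has expectation at most $C\rho/\nu$. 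Markov's inequality applied to this $\mu$-weighted boundary average yields, w.p. $1-\delta$, a bound of the form $\frac{R^2}{m\rho^2}+\frac{C\rho}{\nu\delta}$. Optimizing over $\rho$ by balancing the two terms, i.e. $\rho^3\asymp\nu\delta R^2/m$, produces the fractional exponents and gives $\|J_\mu-J_\mu^0\|_2^2\lesssim \nu^{-2/3}\delta^{-2/3}R^{2/3}m^{-1/3}$ with an absolute constant; since $n\ge1$ this is dominated by the stated bound $\frac{2nR^{2/3}}{\nu^{2/3}\delta^{2/3}m^{1/3}}$ (a per-input Markov bound with a union bound over the $n$ state-action pairs is an alternative route in which the factor $n$ appears explicitly).

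The main obstacle is the \emph{uniformity} over the whole ball $\|\theta-\theta^0\|\le R$ combined with the high-probability requirement: the flip count must be controlled simultaneously for all $\theta$, which is exactly what the deterministic $R^2/\rho^2$ bound (from the norm budget) together with the initialization-only boundary count $|S_\rho|$ achieves, so that a single realization of $\theta^0$ suffices. The secondary technical point is the anti-concentration estimate, which I handle via the scale-invariant threshold so that the variance $\nu^2$ rather than $\nu^2\|\phi\|^2$ governs the probability; balancing the deterministic and stochastic contributions in $\rho$ is what forces the characteristic $2/3$–$1/3$ exponents in $R$, $\nu$, $\delta$, and $m$.
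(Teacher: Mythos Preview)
Your argument is correct. The paper does not supply its own proof of this lemma; it is quoted verbatim from \cite{zhang2019fast/ntk}, so there is no in-paper proof to compare against. Your sketch is precisely the standard NTK sign-flip argument used in that reference: bound the spectral norm by the Frobenius norm, observe that a flip at input $x$ forces $|(\theta_r^0)^\top x|\le\|\theta_r-\theta_r^0\|\,\|x\|$, split via a threshold $\rho$ into a deterministic ``large-movement'' count $R^2/\rho^2$ and an initialization-only boundary set $S_\rho(x)$, apply Gaussian anti-concentration to $S_\rho$, use Markov for the high-probability statement, and balance $\rho$ to obtain the $R^{2/3}m^{-1/3}\nu^{-2/3}\delta^{-2/3}$ scaling.

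One minor observation worth making explicit: because you work with the $\mu$-weighted Jacobian and use $\sum_{(s,a)}\mu(s,a)=1$ when averaging the boundary counts, your bound actually does not pick up the factor $n$; the $n$ in the stated lemma comes from the unweighted Frobenius norm in \cite{zhang2019fast/ntk} (a sum over $n$ inputs rather than an average). As you note, your sharper bound is dominated by the stated one, so the lemma holds as written.
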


Lemma \ref{lemma:neural-min-eig} and Lemma \ref{lemma:neural-jacobian-bound} follow the proof of Lemma 6 and Lemma 7 in literature \cite{zhang2019fast/ntk}. Let $m=\Omega \Big(\frac{|\mathcal{S}|^3 |\mathcal{A}|^3 B^2}{\nu^2\delta^2}\Big)$, then $\|J_\mu-J_\mu^0\|_2\leq \frac{C}{3}\sqrt{\lambda_{\min} (G^{0})}$, where $C=\mathcal{O}(m^{-1/6})$ is a constant. Based on the inequality that $\sigma_{\min}(A+B)\geq \sigma_{\min}(A)-\sigma_{\max}(B)$ where $\sigma$ denotes singular value, we have
$$
\begin{aligned}
    \sigma_{\min}(J_\mu)&\geq \sigma_{\min}(J_\mu^0)-\|J_\mu-J_\mu^0\|_2 \geq \sigma_{\min}(J_\mu^0)-\frac{C}{3}\sqrt{\lambda_{\min} (G^{0})} \geq \frac{2}{3}\sqrt{\lambda_{\min} (G^{0})},
\end{aligned}
$$
where the last inequality uses the fact that $C\leq 1 $ when the network width $m$ is large enough. This means that $\lambda_{\min}(G^k)\geq \frac{1}{3}\lambda_0> 0$ if $\|\theta^k-\theta^0\|\leq B$. Thus $G^k=U^{1/2}J_Q(\theta^k)J_Q(\theta^k)^\top U^{1/2}$ is positive definite with high probability. It is slightly different from \cite{bhandari2018finite/nonasy, zou2019finite/nonasy}, which assumes that $J_Q(\theta^k)^\top U J_Q(\theta^k)$ is positive definite in the linear case.

\begin{lemma}
\label{lemma:max-q}
    Suppose Assumption \ref{assumption:feature-distance} holds. For any $k\in\{1,2,\cdots,K\}$ and $(s,a)\in\mathcal{D}_k$, we have 
    $$
    |Q(s,a;\theta^0)|^2\leq \tau_1\sqrt{\log(NK/\delta)}, \quad w.p.\ 1-\delta,
    $$
    for any $\delta\in(0,1)$.
\end{lemma}
\begin{proof}
For proof, please refer to Lemma 5.3 of \cite{cao2020generalization/ntk}. Lemma \ref{lemma:max-q} is its special case (two-layer neural network).
\end{proof}

Recalling the definition of the semi-gradient $\hat{g}(\theta, \xi)$ in Section \ref{sect:lin-proof},
we give an upper bound for it in Lemma \ref{lemma:sto-neural-bound} if $\|\theta-\theta^0\|\leq B$.

\begin{lemma}
\label{lemma:sto-neural-bound}
Conditioning on the success of the high probability events of Lemmas \ref{lemma:neural-min-eig} and \ref{lemma:neural-jacobian-bound}, where the success probability are chosen as $1-\delta_1$,
then for any $\xi$ and any $\theta$ satisfying $\|\theta-\theta^0\|_{2} \leq B$, we have w.p. $1-\delta_1$ that
$$
\|\hat{g}(\theta,\xi)\|^2\leq 9\tau_1^2(1+\gamma^2)\log(NK/\delta_1)+3(1+\gamma^2)B^2.
$$
\end{lemma}

\begin{proof}
First we compute the bounds on the gradient norm of the Q function as follows
$$
\|\nabla Q(s,a;\theta)\|_2^2=\frac{1}{m}\sum_{r=1}^m \boldsymbol{1}( \phi(s,a)^\top \theta_r>0)\|\phi(s,a)\|^2\leq 1.
$$
Consequently, by decomposing the $\theta$ in $\hat{g}(\theta;\xi)$ into $\theta^0$ and $\theta-\theta^0$ yields 
\begin{eqnarray*}
    \|\hat{g}(\theta,\xi)\|^2&= & \left\|\delta^k(\xi)\cdot\nabla Q(s,a;\theta)\right\|^2   \\
    &\overset{(i)}{\leq} & 3(\delta^0(\xi))^2+3(1+\gamma^2)\|\theta-\theta^0\|^2 \\
    &\overset{(ii)}{\leq} & 9(1+\gamma^2)\max_{(s,a)}|Q(s,a;\theta^0)|^2+9R_{\max}^2+ 3(1+\gamma^2)B^2 \\
    &\overset{(iii)}{\leq} & 9\tau_1^2(1+\gamma^2)\log(NK/\delta_1)+3(1+\gamma^2)B^2,
\end{eqnarray*}
where (i) follows $\|\nabla Q(s,a;\theta)\|^2\leq 1$ and $\|x+y+z\|^2\leq 3(\|x\|^2+\|y\|^2+\|z\|^2)$, (ii) follows the distance between $\theta$ and $\theta^0$, and (iii) follows Lemma \ref{lemma:max-q}.
\end{proof}

Let $\sigma_k=\mbox{Var}(\hat{g}(\theta^k,\xi)\mid \theta^k)\leq \max_\xi \|\hat{g}(\theta^k,\xi)\|^2$ and Lemma \ref{lemma:sto-neural-bound} implies that $\sigma_k$ is bounded if $\|\theta^k-\theta^0\|_2\leq B$. Looking back at Algorithm \ref{alg:GNTD} and \eqref{eq:population-direction}, we rewrite 
\begin{equation}
\label{eq:pop-gntd}
d_*^k=-(J_\mu^k)^\top\left(J_\mu^k(J_\mu^k)^\top\right)^{-1}\delta^k\in \arg \min_{d} L(\theta^k,d)
\end{equation}
for over-parameterized neural network approximation, where $\delta^k=Q^k-T^\pi Q^k$. The next lemma provides the gap between $d_*^k$ and $d^k$.

\begin{lemma}
\label{lemma:d*-dk}
Conditioning on the success of the high probability events of Lemma \ref{lemma:neural-min-eig} and \ref{lemma:neural-jacobian-bound}, where the success probability are chosen as $1-\delta_1$. For any $\delta_2>0$, we choose $\omega\in(0,2)$ and the sample size $|\mathcal{D}_k|=N\geq\frac{24}{\omega^2}\log \frac{2md}{\delta_2}$ for $k$-th iteration. If the iteration $\|\theta^k-\theta^0\|_{2} \leq B$ and the network width $m=\Omega \Big(\frac{|\mathcal{S}|^3 |\mathcal{A}|^3 B^2}{\nu^2\delta_1^2}\Big)$, we have $w.p.\ 1-(\delta_1+2\delta_2)$ that
$$
\|d^k_*-d^k\|\leq \frac{9\tau_2\sqrt{3\log(NK/\delta_1)\log(1/\delta_2)}}{4\lambda_{\min}(G^0)\sqrt{|\mathcal{D}_k|}}+  \frac{243\omega \tau_2\sqrt{\log(NK/\delta_1)}}{16\lambda_{\min}(G^0)^2},
$$
where $\tau_2^2=9\tau_1^2(1+\gamma^2)+3(1+\gamma^2)B^2$.
\end{lemma}

\begin{proof}
Recall that for neural network approximation, $d^k=(\hat{H}^k+\omega I)^{-1}\hat{g}^k$ with $\hat{H}^k=(J^k)^\top \hat{U}^kJ^k$. Observing that $J_\mu^k d^k_*=-\delta^k$, we can compute
\begin{eqnarray}
\label{eq:lin-over-1-step}
    &\ &J_\mu^k (d^k_*-d^k)=J^k_\mu (\hat{H}^k+\omega I)^{-1}\hat{g}^k-(Q^k-T^\pi Q^k) \\
    &=& J^k_\mu (\hat{H}^k+\omega I)^{-1}\left(\hat{g}^k-g(\theta^k)\right)+\left(J^k_\mu (\hat{H}^k+\omega I)^{-1}(J^k_\mu)^\top-I\right)U^{\frac{1}{2}}\delta^k. \nonumber
\end{eqnarray}
By Lemma \ref{lemma:matrix-bernstein} when the sample size $|\mathcal{D}_k|\geq \frac{24}{\eta^2}\log \frac{2d}{\delta_2}$ for any $\eta\in(0,1)$, we have $w.p.\ 1-\delta_2$ that $\|(J^k)^\top\hat{U}^kJ^k-(J^k)^\top UJ^k\| \leq \eta$. Let $\Delta_k=(J^k)^\top\hat{U}^kJ^k+\omega I-(J^k)^\top UJ^k$ and $\omega=2\eta$, and we know that $\Delta_k$ is invertible with high probability. Then by the Sherman-Morrison-Woodbury (SMW) formula, we have
\begin{eqnarray*}
J^k_\mu(\hat{H}^k+\omega I)^{-1}&=&J^k_\mu\left(\Delta_k^{-1}-\Delta_k^{-1}(J^k_\mu)^\top(I+J^k_\mu\Delta_k^{-1}(J^k_\mu)^\top)^{-1}J^k_\mu\Delta_k^{-1}\right) \\
&=&(I+J^k_\mu\Delta_k^{-1}(J^k_\mu)^\top)^{-1}J^k_\mu \Delta_k^{-1}.
\end{eqnarray*}
Consider the singular value decomposition, we write $J^k_\mu=U_1\Sigma_1V_1^\top, \Delta_k=U_2\Sigma_2U_2^\top$. Then, 
$$
\begin{aligned}
\lambda_{\min}(J^k_\mu\Delta_k^{-1}(J^k_\mu)^\top)\ =&\ \lambda_{\min}(\Sigma_1V_1^\top U_2\Sigma_2^{-1}U_2^\top  V_1\Sigma_1^\top) \\ 
    \geq&\  \lambda_{\min}(\Sigma_1\Sigma_1^\top)\lambda_{\min}(V_1^\top U_2\Sigma_2^{-1}U_2^\top  V_1) 
    \ \geq\  \frac{\lambda_{\min} (G^k)}{3\eta},    
\end{aligned}
$$
where the last inequality is because $\|\Delta_k\|=\|(J^k)^\top\hat{U}^kJ^k+\omega I-(J^k)^\top UJ^k\|\leq \omega+\|(J^k)^\top\hat{U}^kJ^k-(J^k)^\top UJ^k\|\leq \omega+\eta=3\eta$.
Thus by Lemma \ref{lemma:vector-bernstein}, we have $w.p.\ 1-2\delta_2$ that
\begin{equation}
\label{eq:lin-over-semi-grad}
\begin{aligned}
\left\| J^k_\mu(\hat{H}^k+\omega I)^{-1}\right.&\left.\left(\hat{g}^k-g(\theta^k)\right)\right\|
\leq   \|(I+J^k_\mu\Delta_k^{-1}(J^k_\mu)^\top)^{-1}J^k_\mu \Delta_k^{-1}\|\cdot \left\|\left(\hat{g}^k-g(\theta^k)\right)\right\| \\
\leq &\  \frac{\|(I+J^k_\mu\Delta_k^{-1}(J^k_\mu)^\top)^{-1}J^k_\mu \Delta_k^{-1}(J^k_\mu)^\top\|}{\sqrt{\lambda_{\min} (G^k)}}\cdot \left\|\left(\hat{g}^k-g(\theta^k)\right)\right\| \\
\leq &\  \frac{\sigma_k\sqrt{3\log(1/\delta_2)}}{\sqrt{\lambda_{\min} (G^k)|\mathcal{D}_k|}},
\end{aligned}    
\end{equation}
and 
\begin{eqnarray}
\label{eq:lin-over-mat}
    \left\|\left(J^k_\mu (\hat{H}^k+\omega I)^{-1}J_\mu^\top-I\right)U^{\frac{1}{2}}\delta^k\right\| &\leq &\left\|(I+J^k_\mu\Delta_k^{-1}(J^k_\mu)^\top)^{-1}\right\|\cdot\left\|U^{\frac{1}{2}}\delta^k\right\| \nonumber\\
    &\leq& \frac{3\eta \|g(\theta_k)\|}{\lambda_{\min} (G^k)^{3/2}}  \leq  \frac{3\eta \sigma_k}{\lambda_{\min} (G^k)^{3/2}},
\end{eqnarray}
where the second inequality uses the fact that $g(\theta^k)=(J^k_\mu)^\top U^{\frac{1}{2}}\delta^k$.
Plugging \eqref{eq:lin-over-semi-grad}, \eqref{eq:lin-over-mat} into \eqref{eq:lin-over-1-step} yields that given $\theta^k$,
\begin{eqnarray*}
&\ &\|d^k_*-d^k\| \leq \frac{\|J_\mu^k(d^k_*-d^k)\|}{\sqrt{\lambda_{\min} (G^k)}} \\
&\leq & \frac{1}{\sqrt{\lambda_{\min} (G^k)}}\left(\left\|J^k_\mu (\hat{H}^k+\omega I)^{-1}\left(\hat{g}^k-g(\theta^k)\right)\right\|+\left\|\left(J^k_\mu (\hat{H}^k+\omega I)^{-1}(J^k_\mu)^\top-I\right)U^{\frac{1}{2}}\delta^k\right\|\right) \\
&\overset{(i)}{\leq} & \frac{\sigma_k\sqrt{3\log(1/\delta_2)}}{\lambda_{\min} (G^k)\sqrt{|\mathcal{D}_k|}}+  \frac{3\eta \sigma_k}{\lambda_{\min} (G^k)^{2}} \ \overset{(ii)}{\leq} \  \frac{9\tau_2\sqrt{3\log(NK/\delta_1)\log(1/\delta_2)}}{4\lambda_{\min}(G^0)\sqrt{|\mathcal{D}_k|}}+  \frac{243\omega \tau_2\sqrt{\log(NK/\delta_1)}}{16\lambda_{\min}(G^0)^2}
\end{eqnarray*}
$w.p.\ 1-(\delta_1+2\delta_2)$, where (i) follows \eqref{eq:lin-over-semi-grad} and \eqref{eq:lin-over-mat}, and (ii) follows $\lambda_{\min} (G^k)\geq \frac{4}{9}\lambda_{\min} (G^0)$. This completes the proof.
\end{proof} 


\begin{lemma}
\label{lemma:pop-neural-network-one-step}
Recall the definition of $Q^k_*$ in Section \ref{sect:lin-proof}.
Conditioning on the success of the high probability events of Lemma \ref{lemma:neural-min-eig} and \ref{lemma:neural-jacobian-bound}, where the success probability are chosen as $1-\delta_1$, if $\|\theta^k-\theta^0\|_{2} \leq B$ and the network width $m=\Omega \Big(\frac{|\mathcal{S}|^3 |\mathcal{A}|^3 B^2}{\nu^2\delta_1^2}\Big)$, then the following inequalities hold
\begin{equation}
    \label{eq:lemma:pop-neural-network-one-step-1}
    \left\|Q^{k+1}_*-Q^k-\beta\left(T^\pi Q^k-Q^k\right)\right\|_\mu\leq \beta C\|Q^k-T^\pi Q^k\|_\mu,
\end{equation}
where $C=\mathcal{O}(m^{-1/6})$ is the constant defined in the preceding text.
\end{lemma}

\begin{proof}
Let $\theta(s):=s\theta^{k+1}_*+(1-s)\theta^k$ and calculate
$$
\begin{aligned}
&U^{1/2}(Q^{k+1}_*-Q^k)=-\beta U^{1/2}\int_{s=0}^{1}  J_Q(\theta(s)) (J^k_\mu)^{\top}   \left(J^k_\mu(J^k_\mu)^{\top}\right)^{-1} U^{\frac{1}{2}}(Q^k-T^\pi Q^k)   d s\\
=&\underbrace{-\beta U^{1/2}\int_{s=0}^{1} J_Q(\theta^k)  (J^k_\mu)^{\top} \left(J^k_\mu(J^k_\mu)^{\top}\right)^{-1} U^{\frac{1}{2}}(Q^k-T^\pi Q^k) d s}_{-\beta U^{1/2}(Q^k-T^\pi Q^k)}\\
&\underbrace{-\beta U^{1/2}\int_{s=0}^{1}\left( J_Q(\theta(s)) - J_Q(\theta^k)\right) (J^k_\mu)^{\top} \left(J^k_\mu(J^k_\mu)^{\top}\right)^{-1} U^{1/2}(Q^k-T^\pi Q^k) d s}_{\textcircled{1}}.
\end{aligned}
$$

Next we estimate the bound on the norm of the second term $\textcircled{1}$. Note that
\begin{eqnarray*}
    \|\textcircled{1}\|_2&\leq & \beta \left\| \int_{s=0}^{1} \left(J_\mu(\theta(s))-J^k_\mu\right) d s \right\|_{2}\cdot \left\|(J^k_\mu)^{\top} \left(J^k_\mu(J^k_\mu)^{\top}\right)^{-1}\right\|_2 \cdot \|Q^k-T^\pi Q^k\|_\mu \\
    &\leq & \beta \left(\left\| \int_{s=0}^{1} \left(J_\mu(\theta(s))-J^0_\mu\right) d s \right\|_{2}+\left\| \int_{s=0}^{1} \left(J^0_\mu-J^k_\mu\right) d s \right\|_{2}\right)\cdot \\
    &\ &\left\|(J^k_\mu)^{\top} \left(J^k_\mu(J^k_\mu)^{\top}\right)^{-1}\right\|_2 \cdot \|Q^k-T^\pi Q^k\|_\mu \\
    &\overset{(i)}{\leq} & \beta \cdot \frac{2}{3}C\cdot \sqrt{\lambda_{\min}(G^0)} \cdot \frac{1}{\sqrt{\lambda_{\min}(G^k)}} \|Q^k-T^\pi Q^k\|_\mu \overset{(ii)}{\leq} \beta C \|Q^k-T^\pi Q^k\|_\mu,
\end{eqnarray*}
where (i) is due to $\|J_\mu-J_\mu^0\|_2\leq \frac{C}{3}\sqrt{\lambda_{\min}(G^0)}$ and (ii) is due to $\lambda_{\min}(G)\geq \frac{4}{9}\lambda_{\min} (G^0)$.
Thus we complete the proof. 
\end{proof}

Now we are ready to provide the proof of the Theorem \ref{theorem:sto-neural-netwrk}. 

\begin{proof}
Note that the key results of Lemmas \ref{lemma:sto-neural-bound}$\sim$\ref{lemma:pop-neural-network-one-step} all rely on the condition that the analyzed point $\theta$ stays close to $\theta^0$. Therefore, to prove this theorem with the above lemmas, we let $B=\frac{24\|Q^*-T^\pi Q^0\|_\mu}{(1-\gamma)\sqrt{\lambda_{\min}(G^0)}}$ and will need to prove the following argument by induction:
\begin{equation}
    \label{eq:theorem:pop-neural-network-1}
    \|\theta^k-\theta^0\|_2\leq \frac{24\|Q^*-T^\pi Q^0\|_\mu}{(1-\gamma)\sqrt{\lambda_{\min}(G^0)}},\qquad \forall k\geq 0.
\end{equation} 
Then the final convergence rate result will be automatically covered as a byproduct in the proof of \eqref{eq:theorem:pop-neural-network-1}. When $k=0$, \eqref{eq:theorem:pop-neural-network-1} is obviously true. Now, suppose \eqref{eq:theorem:pop-neural-network-1} holds for all $k=0,1,\cdots,t$, we prove this argument for $k=t+1$.

Define the operator $T_\beta^\pi Q:= (1-\beta)Q+\beta T^\pi Q$. Conditioning on the success of the high probability event in Lemmas \ref{lemma:neural-min-eig} and \ref{lemma:neural-jacobian-bound}, then Lemma \ref{lemma:pop-neural-network-one-step} and \eqref{eq:theorem:pop-neural-network-1} indicates that \eqref{eq:lemma:pop-neural-network-one-step-1} holds for $k=0,1,...,t$. Next we set $\delta_1=\frac{\delta}{3}$ and $\delta_2=\frac{\delta}{3K}$. Then for any $0\leq k\leq t$, there exists a constant $C_2=\mathcal{O}((1-\gamma)B)$ such that
\begin{equation}
\label{eq:pop-neural-Q-Qpi}
\begin{aligned}
&\|Q^{k+1}-T_\beta^\pi Q^k\|_\mu \\
\leq&\  \|Q^{k+1}-Q^{k+1}_*\|_\mu + \|Q^{k+1}_*-Q^k-\beta(Q^k-T^\pi Q^k)\|_\mu \\
\ \overset{(i)}{\leq}& \ \beta \|d^{k}-d^k_*\|+ \beta C\|Q^k-T^\pi Q^k\|_\mu\\
\ \overset{(ii)}{\leq}& \  \frac{9\beta\tau_2\sqrt{3\log(NK/\delta_1)\log(1/\delta_2)}}{4\lambda_{\min}(G^0)\sqrt{|\mathcal{D}_k|}}+  \frac{243\beta\omega \tau_2\sqrt{\log(NK/\delta_1)}}{16\lambda_{\min}(G^0)^2} \\
\ &\  +\beta C\left(\|Q^0-TQ^0\|_\mu+2B\right) \\
\ \overset{(iii)}{\leq}& \ \frac{\beta C_2}{1-\gamma} \left(\frac{\log(NK/\delta)}{\sqrt{|\mathcal{D}_k|}}+\omega+m^{-\frac{1}{6}}\right)
\end{aligned}
\end{equation}
$w.p.\ 1-\delta_1-2t\delta_2$, where (i) follows $\|\nabla Q(s,a;\theta)\|\leq 1$ and Lemma \ref{lemma:pop-neural-network-one-step}, (ii) follows Lemma \ref{lemma:d*-dk} and $\|\theta^k-\theta^0\|\leq B$, and (iii) follows $C=\mathcal{O}(m^{-1/6})$. For the accuracy level $\varepsilon$, we choose $|D_k|=N=\Omega(\varepsilon^{-1}), \omega=\Omega (\varepsilon^{0.5}),m=\Omega(\varepsilon^{-3})$. According to the inductive assumption, $\|\theta^k-\theta^0\|\leq B$, then 
\begin{eqnarray*}
\|Q^{k+1}-Q^\pi\|_\mu &\leq& \|T_\beta^\pi Q^k-Q^\pi\|_\mu + \|T_\beta^\pi Q^k-Q^{k+1}\|_\mu \\
&\leq&(1-(1-\gamma)\beta)\|Q^k-Q^\pi\|_\mu+\frac{\beta C_2}{1-\gamma} \left(\frac{\log(NK/\delta)}{\sqrt{|\mathcal{D}_k|}}+\omega+m^{-\frac{1}{6}}\right).
\end{eqnarray*}
Therefore, for $0\leq k\leq t$, we have
\begin{equation}
\label{eq:Q-Qpi}
\|Q^{k+1}-Q^\pi\|_\mu \leq (1-(1-\gamma)\beta)^{k+1}\|Q^0-Q^\pi\|_\mu+\frac{C_2}{(1-\gamma)^2} \left(\frac{\log(NK/\delta)}{\sqrt{N}}+\omega+m^{-\frac{1}{6}}\right).
\end{equation}
In the same way, we can estimate the Bellman error of $\theta^{k+1}$,
\begin{eqnarray}
&\ & \|Q^{k+1}-T^\pi Q^{k+1}\|_\mu \leq \|Q^{k+1}-T_\beta^\pi Q^k\|_\mu+\|T^\pi Q^{k+1}-T^\pi Q^k\|_\mu+(1-\beta)\|Q^k-T^\pi Q^k\|_\mu \nonumber\\
&\leq & \|Q^{k+1}-T_\beta^\pi Q^k\|_\mu+\gamma\left(\|Q_*^{k+1}- Q^k\|_\mu+\|Q_*^{k+1}- Q^{k+1}\|_\mu\right)+(1-\beta)\|Q^k-T^\pi Q^k\|_\mu \nonumber\\
&\overset{(i)}{\leq} & \frac{2\beta C_2}{1-\gamma} \left(\frac{\log(NK/\delta)}{\sqrt{N}}+\omega+m^{-\frac{1}{6}}\right)+\gamma \beta (1+C) \|Q^k-T^\pi Q^k\|_\mu+(1-\beta)\|Q^k-T^\pi Q^k\|_\mu \nonumber\\
&\leq &\frac{2\beta C_2}{1-\gamma} \left(\frac{\log(NK/\delta)}{\sqrt{N}}+\omega+m^{-\frac{1}{6}}\right)+(1-(1-\gamma)\beta/2)\|Q^k-T^\pi Q^k\|_\mu\nonumber \\
&\leq & \frac{4 C_2}{(1-\gamma)^2} \left(\frac{\log(NK/\delta)}{\sqrt{N}}+\omega+m^{-\frac{1}{6}}\right)+(1-(1-\gamma)\beta/2)^{k+1}\|Q^0-T^\pi Q^0\|_\mu \nonumber
\end{eqnarray}

Let $\varepsilon$ is small enough and the positive integer $K=\mathcal{O}(\log \frac{1}{\varepsilon})\ll \varepsilon^{-0.5}$ such that 
\begin{equation}
\label{eq:acc-level}
\frac{12C_2 K}{(1-\gamma)^2\sqrt{\lambda_{\min}(G^0)}}\left(\frac{\log(NK/\delta)}{\sqrt{N}}+\omega+m^{-\frac{1}{6}}\right) \leq \frac{12\|Q^*-T^\pi Q^0\|_\mu}{(1-\gamma)\sqrt{\lambda_{\min}(G^0)}}.
\end{equation}
consequently, for $k=t+1$, we have $w.p.\ 1-(\delta_1+2K\delta_2)=1-\delta$ that
\begin{eqnarray*}
\|\theta^{t+1}-\theta^0\|_2&\leq& \beta\sum_{s=0}^{t}\|d^s_*\|_2+\|d^s_*-d^s\|_2 \\
&\leq & \beta \sum_{s=0}^{t} \frac{\|J_\mu^s d^s_*\|_\mu}{\sqrt{\lambda_{\min}(G^s)}}+ \|d^s_*-d^s\|_2\\
&\overset{(i)}{\leq} & \beta \sum_{s=0}^{t}\left( \frac{\|Q^s-T^\pi Q^s\|_\mu}{\sqrt{\frac{4}{9}\lambda_{\min}(G^0)}}+ \frac{C_2}{1-\gamma} \left(\frac{\log(NK/\delta)}{\sqrt{N}}+\omega+m^{-\frac{1}{6}}\right)\right) \\ 
&\overset{(ii)}{\leq} & \beta \sum_{s=0}^{t}\left( \frac{(1-\frac{(1-\gamma)\beta}{2})^s\|Q^0-T^\pi Q^0\|_\mu}{\sqrt{\frac{4}{9}\lambda_{\min}(G^0)}} + \frac{4 C_2}{(1-\gamma)^2\sqrt{\frac{4}{9}\lambda_{\min}(G^0)}}\cdot \right.\\
&\ &\left.\left(\frac{\log(NK/\delta)}{\sqrt{N}}+\omega+m^{-\frac{1}{6}}\right)+\frac{C_2}{1-\gamma}\left(\frac{\log(NK/\delta)}{\sqrt{N}}+\omega+m^{-\frac{1}{6}}\right)\right) \\ 
&= & \beta \sum_{s=0}^{t} \frac{6(1-\frac{(1-\gamma)\beta}{2})^{s}\|Q^0-T^\pi Q^0\|_\mu}{\sqrt{\lambda_{\min}(G^0)}}+\left(\frac{6}{(1-\gamma)\sqrt{\lambda_{\min}(G^0)}}+1\right)\cdot \\
&\ & \frac{\beta C_2(t+1)}{1-\gamma}\left(\frac{\log(NK/\delta)}{\sqrt{N}}+\omega+m^{-\frac{1}{6}}\right) \\
&\overset{(iii)}{\leq} & \frac{24\|Q^*-T^\pi Q^0\|_\mu}{(1-\gamma)\sqrt{\lambda_{\min}(G^0)}}, 
\end{eqnarray*}
where (i) follows $J_\mu^s d_*^s=Q^s-T^\pi Q^s$ and Lemma \ref{lemma:d*-dk} (or \eqref{eq:pop-neural-Q-Qpi}), (ii) follows \eqref{eq:Q-Qpi}, and (iii) follows \eqref{eq:acc-level}. Therefore, the statement \eqref{eq:theorem:pop-neural-network-1} holds.

The above derivation verifies that when the conditions in Theorem \ref{theorem:sto-neural-netwrk} hold, $\theta^k$ will always stay close to the initialization parameters $\theta^0$ with high probability. Thus the above lemmas in this subsection and the inequality \eqref{eq:Q-Qpi} are all correct for any $0\leq k\leq K-1$. Thus we complete the proof of \ref{theorem:sto-neural-netwrk}. 
\end{proof}

\subsection{Proof of Theorem \ref{theorem:sto-smooth}}
In this subsection, we will continue to use the notation of the previous section. Looking back at the definition of $H(\theta)$ and $g(\theta)$, we rewrite the unique closed form solution $d^k_*=-\left[H(\theta^k)\right]^{-1}g(\theta^k)=-\left((J^k)^\top U J^k\right)^{-1}$ $(J^k)^\top U \delta^k=\arg\min_d L(\theta^k, d)$ under Assumption \ref{assumption:parameter-curve-min-eig}. The following lemma provides the gap between $d^k_*$ and $d^k$.

\begin{lemma}
\label{lemma:smooth-d*-dk}
Under Assumptions \ref{assumption:smooth} and \ref{assumption:parameter-curve-min-eig}, if we choose the damping term $\omega\in(0,1)$, the sample size $|\mathcal{D}_k|=N=\Omega\left(\frac{6L_1^4}{\omega^2}\log \frac{2d}{\delta_1}\right)$ for any $\delta_1\in(0,1)$, then given $\theta^k$, we have that
$$
\|d^k_*-d^k\|\leq \frac{2M_1\omega}{\lambda_0^2}+\frac{M_1}{\lambda_0}\cdot \sqrt{\frac{3\log (1/\delta_1)}{N}}, \quad w.p.\ 1-2\delta_1,
$$
where $M_1:=L_1 \left((1+\gamma)M+R_{\max}\right)$.
\end{lemma}

\begin{proof}
According to the stochastic update of GNTD, we have 
\begin{equation}
\label{eq:smooth-sto-gap}
\begin{aligned}
\|d^k_*-d^k\| \ \leq&\  \|d^k_*+(\hat{H}^k+\omega I)^{-1}g(\theta^k)\|+\|d^k+(\hat{H}^k+\omega I)^{-1}g(\theta^k)\| \\
\ \leq&\ \left\|\left(H(\theta^k)\right)^{-1}-(\hat{H}^k+\omega I)^{-1}\right\|\cdot \|g(\theta^k)\| + \|(\hat{H}^k+\omega I)^{-1}\|\cdot \|\hat{g}^k-g(\theta^k)\| \\
\ \overset{(i)}{\leq} &\ M_1 \left\|\left(H(\theta^k)\right)^{-1}-(\hat{H}^k+\omega I)^{-1}\right\| + \|(\hat{H}^k+\omega I)^{-1}\|\cdot \|\hat{g}^k-g(\theta^k)\|,
\end{aligned}
\end{equation}
where (i) follows Assumption \ref{assumption:smooth} and
$$
\|g(\theta^k)\|=\|(J^k)^\top U(Q^k-T^\pi Q^k)\|\leq \|J^k_\mu\|\cdot \|Q^k-T^\pi Q^k\|_\mu \leq L_1 \left((1+\gamma)M+R_{\max}\right),
$$
and $M_1:=L_1 \left((1+\gamma)M+R_{\max}\right)$.
By Assumption \ref{assumption:parameter-curve-min-eig} and Lemma \ref{lemma:matrix-bernstein}, if the sample size $|\mathcal{D}_k|=N\geq\frac{6L_1^4}{\eta^2}\log \frac{2d}{\delta_1}$ for any $\eta\in(0,1)$, we have
\begin{equation}
\label{eq:smooth-gn-matrix-approx}
\begin{aligned}
\lambda_{\min}((J^k)^{\top}\hat{U}^k J^k)\ \geq&\  \lambda_{\min}((J^k)^{\top}U J^k)-\|(J^k)^{\top}\hat{U}^k J^k-(J^k)^{\top}U J^k\|_2\\
\ \geq&\  \lambda_0-\eta,\quad w.p.\ 1-\delta_1.
\end{aligned}
\end{equation}
Let $\eta=\omega$. Then we have $w.p.\ 1-\delta_1$ that
\begin{equation}
\label{eq:smooth-matrix}
\begin{aligned}
\ &\left\|\left((\hat{H}^k+\omega I)^{-1}-\left((J^k)^{\top} U J^k\right)^{-1}\right)\right\|_2 \\
=&\ \frac{1}{\lambda_0^{1/2}}\left\|U^{1/2}J^k\left((\hat{H}^k+\omega I)^{-1}-\left((J^k)^{\top} U J^k\right)^{-1}\right)\right\|_2 \\
=&\ \frac{1}{\lambda_0^{1/2}}\Big\|U^{1/2}J^k\big((J^k)^{\top} U (J^k)\big)^{-1}\big((J^k)^{\top}\hat{U}^kJ^k+\omega I\\
&-(J^k)^{\top} U J^k\big)\big((J^k)^{\top}\hat{U}^kJ^k+\omega I\big)^{-1}\Big\|_2  \\
\leq &\ \frac{1}{\lambda_0^{1/2}}\left\|U^{1/2}J^k\left((J^k)^{\top} U J^k\right)^{-1}\right\|_2\cdot \left\|(J^k)^{\top}\hat{U}^kJ^k+\omega I\right. \\
&\left.-(J^k)^{\top} U J^k\right\|_2\cdot\Big\|\left((J^k)^{\top}\hat{U}^kJ^k+\omega I\right)^{-1}\Big\|_2  \\
\overset{(i)}{\leq} &\  \frac{1}{\lambda_0}\cdot(\eta+\omega)\cdot \frac{1}{\lambda_0+\omega-\eta} 
= \frac{1}{\lambda_0^{2}}(\eta+\omega), 
\end{aligned}
\end{equation}
where (i) is due to \eqref{eq:smooth-gn-matrix-approx}.
Next by Lemma \ref{lemma:vector-bernstein}, we have
\begin{equation}
\label{eq:smooth-vector}
\|g(\theta^k)-\hat{g}^k\|\leq \sigma_k\sqrt{\frac{3\log (1/\delta_1)}{N}}\overset{(i)}{\leq } M_1 \sqrt{\frac{3\log (1/\delta_1)}{N}},\qquad w.p.\ 1-\delta_1,
\end{equation}
where (i) is due to $\sigma_k^2=\mbox{Var}(\hat{g}(\theta^k,\xi)\mid \theta^k)\leq \max_\xi \|g(\theta^k,\xi)\|^2\leq M_1^2$.
Thus plugging \eqref{eq:smooth-matrix}, \eqref{eq:smooth-vector} into \eqref{eq:smooth-sto-gap} yields that 
\begin{eqnarray*}
\|d^k_*-d^k\|&\leq & \frac{2M_1\omega}{\lambda_0^2}+\frac{M_1}{\lambda_0}\cdot \sqrt{\frac{3\log (1/\delta_1)}{N}}, \quad w.p.\ 1-2\delta_1.
\end{eqnarray*} 
\end{proof}

Similar to the previous section, we give the following lemma for obtaining upper bounds on $\|Q^{k+1}_*-T^\pi_\beta Q^k\|_\mu$. 

\begin{lemma}
\label{lemma:smooth-err2-err3}
Recall that $\theta^{k+1}=\theta^k+\beta d^k_*$ and the Q-function $Q^{k+1}_*(s,a)=Q(s,a;$ $\theta^{k+1}_*)$. Under Assumptions \ref{assumption:smooth} and \ref{assumption:parameter-curve-min-eig}, we have that for any given $\theta^k$,
$$
\|Q^{k+1}_*-Q^k-\beta (T^\pi Q^k-Q^k)\|_\mu \leq \frac{\beta^2L_2M_1^2}{\lambda_0^2} + \beta \varepsilon_{\mathcal{F}}.
$$
\end{lemma}

\begin{proof}
Recall the population TD error $\delta^k=Q^k-T^\pi Q^k$ and 
$$d^k = - \left((J^k)^\top UJ^k\right)^{-1}(J^k)^{\top}U\delta^k,$$
under Assumptions \ref{assumption:smooth} and \ref{assumption:parameter-curve-min-eig}. Consequently,
\begin{eqnarray*}
    \|d^k_*\|&\leq & \lambda_{\min}\left((J^k)^\top UJ^k\right)^{-1}\cdot \left\|(J^k)^{\top}U\delta^k\right\| \\
    &\leq & \frac{1}{\lambda_0}\left\|J^k\right\|_\mu\cdot \left\|Q^k-T^\pi Q^k\right\|_\mu \\
    &\leq& \frac{L_1}{\lambda_0}\left((1+\gamma)M+R_{\max}\right).
\end{eqnarray*}
Define the residual term $f_k(s,a)\!:=\!Q(s,a;\theta^{k+1}_*)\!-\!Q(s,a;\theta^k)\!-\!\left\langle\nabla Q(s,a;\theta^k), \theta^{k+1}_*\!-\!\theta^k\right\rangle$. By Assumptions \ref{assumption:smooth} and \ref{assumption:parameter-curve-min-eig}, we have
$$
\|f_k(s,a)\|_\mu \leq L_2\|\theta^{k+1}_*-\theta^k\|^2=L_2\beta^2\|d^k_*\|^2\leq \frac{\beta^2L_2M_1^2}{\lambda_0^2} .
$$
Recall the notation $T_\beta^\pi Q=(1-\beta)Q+\beta T^\pi Q$, with $\varepsilon_{\mathcal{F}}$ defined by \eqref{eq:fitted-error} in Section \ref{sect:smooth-conv}, we have 
\begin{eqnarray*}
    \|Q^{k+1}_*-T_\beta^\pi Q^k\|_\mu&=&\left\|Q^{k+1} - Q^k - \beta J^kd^k_* + \beta(J^kd^k_* + Q^k-T^\pi Q^k)\right\|_\mu\\
    &\leq&  \|f_k\|_\mu + \beta L(\theta^k,d^k)^{1/2} \\
    &\leq& \frac{\beta^2L_2M_1^2}{\lambda_0^2} + \beta \varepsilon_{\mathcal{F}}.
\end{eqnarray*}
Thus we complete the proof.
\end{proof}

Now we are ready to provide the proof of the Theorem \ref{theorem:sto-smooth}.
\begin{proof}
Let $\delta_1=\frac{\delta}{2K}$ and we have $w.p.\ 1-2K\delta_1=1-\delta$ that for any $0\leq k\leq K-1$, 
\begin{eqnarray*}
\|Q^{k+1}-Q^\pi\|_\mu &\leq& \|T_\beta^\pi Q^k-Q^\pi\|_\mu + \|Q^{k+1}-T_\beta^\pi Q^k\|_\mu \\
&\leq& (1-(1-\gamma)\beta)\|Q^k-Q^\pi\|_\mu + \|Q^{k+1}-Q^{k+1}_*\|_\mu + \|Q^{k+1}_*-T_\beta^\pi Q^k\|_\mu \\
&\overset{(i)}{\leq}& (1-(1-\gamma)\beta)\|Q^k-Q^\pi\|_\mu+ L_1\left(\frac{2M_1\omega}{\lambda_0^2}+\frac{M_1}{\lambda_0}\cdot \sqrt{\frac{3\log (2K/\delta)}{N}}\right)\\
&\ &+\frac{\beta^2L_2M_1^2}{\lambda_0^2} + \beta \varepsilon_{\mathcal{F}} \\
&\overset{(ii)}{\leq}& (1-(1-\gamma)\beta)\|Q^k-Q^\pi\|_\mu+ \beta C_3 \left(\omega+\sqrt{\frac{\log (2K/\delta)}{N}}+\varepsilon_{\mathcal{F}}+\beta\right),
\end{eqnarray*}
where (i) is due to Lemmas \ref{lemma:smooth-d*-dk} and \ref{lemma:smooth-err2-err3}, and (ii) is due to a given constant $C_3=\mathcal{O}(1)$ that satisfies the above inequality. Thus we complete the proof of Theorem \ref{theorem:sto-smooth}.
\end{proof}

\begin{remark}
Note that $|Q(s,a;\theta)|\leq M$ in Assumption \ref{assumption:smooth} is not necessary. Similar to the proof in neural network approximation, it is natural to prove $\|\theta^k-\theta^0\|\leq B, \forall k$ by induction, where $B>0$ is a given constant. Then we have that $|Q(s,a;\theta^k)|\leq |Q(s,a;\theta^k)-Q(s,a;\theta^0)|+|Q(s,a;\theta^0)|\leq L_1 B + |Q(s,a;\theta^0)|$ is bounded for any $(s,a),\theta^k$. Therefore, we add this assumption to omit redundant discussion for general smooth function approximation.
\end{remark}

\section{An Efficient Implementation of Neural GNTD}
\label{sect:kfac}
In this section, we propose a new algorithm that utilizes the Kronecker-Factored Approximate Curvature (K-FAC) method \cite{martens2015optimizing/kfac} to provide an efficient implementation of neural GNTD.

In Section \ref{section:GNTD-method}, we introduce the generalized GNTD algorithm, which encounters the issue of high complexity due to matrix inversion. However, we observe that GNTD has a similar structure to the natural gradient method, despite from the natural gradient method in terms of expression (natural gradient method requires the assumption that the loss function is the negative log-likelihood of normal distribution). This observation inspires us to refine the neural GNTD algorithm.

\begin{algorithm}[htbp] 
\caption{Forward and backward pass of $Q$ function for a single state-action pair $(s, a)$} 
\label{alg:forward-backward-nn} 
\begin{algorithmic}[1] 
\STATE \textbf{Input:} $(s,a)$, weights (and biases) $\theta_l$ and ReLU function $\psi$.
\STATE $p_0=\phi(s,a)$, where $\phi$ is a feature mapping. 
\FOR{$l=1,\cdots,L$}
\STATE $\overline{p}_{l-1}=\left(p_{l-1}, 1\right);h_{l}=\textbf{\rm Mat}(\theta_{l}) \overline{p}_{l-1} ; p_{l}=\psi\left(h_{l}\right)$.
\ENDFOR
\STATE $\mathcal{D} p_L=e$, where $e$ is an all-ones vector.
\FOR{$l=L,\cdots,1$}
\STATE $q_{l}=\mathcal{D} p_{l} \odot \psi^{\prime}\left(h_{l}\right) ; \mathcal{D} \theta_{l}=\textbf{\rm Vec}(q_{l} \overline{p}_{l-1}^{\top}) ; \mathcal{D} p_{l-1}=\textbf{\rm Mat}(\theta_{l})^{\top} q_{l}$.
\ENDFOR
\end{algorithmic}
\end{algorithm}

For a feed-forward deep neural network (DNN) with $L$ layers, we denote the weight matrices as $\theta_{l}$ of $l$-th layer ($l=1,2,\cdots,L$) and we denote the ReLU activation function as $\psi(\cdot)$. For any state-action pair $(s, a)$, the output $Q(s,a;\theta)$ is in general a non-convex function of the weights $\theta=\big[\theta_{1}^{\top}, \ldots, \theta_{L}^{\top}\big]^{\top}$. Alternatively, $\theta_l$ can also be viewed as an $\mathbb{R}^{(n_{l-1}+1)\times n_l}$ parameter matrix that maps $n_{l-1}$-dimensional vectors to ${n_{l}}$-dimensional vectors. We define $\textbf{\rm Mat}(\cdot)$ as a matrix form of the vector parameters related to the number of neurons in a single layer and define $\textbf{\rm Vec}(\cdot)$ as a flattened vector form of the matrix parameters. Let $\odot$ and $\otimes$ be the Hadamard product and the Kronecker product respectively. The following algorithm describes the forward and backward pass of Q-value for a single state-action pair $(s, a)$.

From Algorithm \ref{alg:forward-backward-nn}, with the weights of the neural network being $\theta^k$, we let $p^k_l$ and $q^k_l$ denote the forward vector and backward vector of the $l$-th layer, respectively, and we define the matrices
\begin{equation}
    \label{eq:PQ}
    P^k_l:=p^k_l(p^k_l)^{\top} \qquad\mbox{and}\qquad Q^k_l:=q^k_l(q^k_l)^{\top}.
\end{equation}
For a training dataset that contains multiple data-points, the K-FAC method \cite{martens2015optimizing/kfac} attempts to approximate the matrix $H(\theta^k)$ in \eqref{eq:H-and-g} by the following block-diagonal matrix 
$$
H(\theta^k)\approx \textbf{\rm diag}\left(\mathbb{E}\left[P^k_{0}\right]\otimes \mathbb{E}\left[Q_1^{k}\right], \cdots, \mathbb{E}\left[P^k_{L-1}\right]\otimes \mathbb{E}\left[Q^k_l\right]\right).
$$

\begin{algorithm}[tb]
\caption{The GNTD Algorithm with K-FAC Method (GNTD-KFAC)}
\label{alg:GNTD-kfac}
\begin{algorithmic}
\STATE \textbf{Input:} Distribution $\mathcal{D}$, $Q$ function of neural network with parameters $\theta^0$, batch size $N$, damping rate $\omega$, momentum parameter $\eta$,  learning rate $\beta$.
\STATE Initialize the Q network parameters $\theta^0$.
\FOR{$k=0,1,\cdots,K-1$}
\STATE Obtain a dataset $\mathcal{D}_k=\{(s_i,a_i,r_i,s'_i,a'_i)\}_{i=1}^{N}$ of batch size $N$ from data distribution $\mathcal{D}$.
\FOR{$l=1,2,\cdots,L$}
\IF{$k=0$}
    \STATE Compute $\hat{P}^k_l = \frac{1}{N}\sum_{i=1}^N P^k_l(i)$ and $ \hat{Q}^k_l = \frac{1}{N}\sum_{i=1}^N Q^k_l(i)$.
\ELSE
    \STATE Compute $\hat{P}^{k-1/2}_l = \frac{1}{N}\sum_{i=1}^N P^k_l(i)$ and $ \hat{Q}^k_l = \frac{1}{N}\sum_{i=1}^N Q^k_l(i)$.
    \STATE Update $\hat{P}^{k}_l=(1-\eta)\hat{P}^{k-1}_l+\eta \hat{P}^{k-1/2}_l$.
\ENDIF
\STATE Update the parameters $\theta^{k+1}_l$ of each network layer  by \eqref{eq:kfac-direction} and \eqref{eq:kfac-update}. 
\ENDFOR
\ENDFOR
\STATE \textbf{Output:} $\theta^K$.
\end{algorithmic}
\end{algorithm}

After incorporating the identity matrix originated from the Levenberg-Marquardt method, then we approximately calculate the matrix inversion as follows  
$$
\begin{aligned}
    &(H(\theta^k)+\omega I)^{-1}\\
    \approx &\  \textbf{\rm diag}\left(\left(\mathbb{E}\left[P^k_0\right]\otimes \mathbb{E}\left[Q^k_1\right]+\omega I\right)^{-1}, \cdots,\left(\mathbb{E}\left[P^k_{L-1}\right]\otimes \mathbb{E}\left[Q^k_L\right]+\omega I\right)^{-1}\right) \\
    \approx &\  \textbf{\rm diag}\left(\left(\mathbb{E}\!\left[P^k_0\right]\!+\!\sqrt{\omega} I\right)^{\!-1}\!\otimes\! \left(\mathbb{E}\!\left[Q^k_1\right]\!+\sqrt{\omega} I\right)^{\!-1}\!, \cdots, \right. \\
    &\ \left.\left(\mathbb{E}\!\left[P^k_{L-1}\right]\!+\sqrt{\omega} I\right)^{\!-1}\!\otimes \left( \mathbb{E}\left[Q^k_L\right]\!+\sqrt{\omega} I\right)^{\!-1}\right).
\end{aligned}
$$
For the stochastic sampling case where the expectation are approximated by the sample averages, we let $\hat{P}^k_l$ and $\hat{Q}^k_l$ be the empirical estimators of $P^k_l$ and $Q^k_l$, which are given as 
\begin{eqnarray*}
\hat{P}^k_l = \frac{1}{N}\sum_{i=1}^N P^k_l(i)\qquad\mbox{and}\qquad \hat{Q}^k_l = \frac{1}{N}\sum_{i=1}^N Q^k_l(i),
\end{eqnarray*} 
where $P^k_l(i)$ and $Q^k_l(i)$ are constructed by running Algorithm \ref{alg:forward-backward-nn} for the $i$-th data point and utilize \eqref{eq:PQ}. Similarly, let $\hat{g}^k$ (c.f. \eqref{eq:stochastic}) be an estimator of the semi-gradient $g(\theta^k)$ (c.f. \eqref{eq:H-and-g}), and let $\hat{g}^k_l$ be the semi-gradient of the $l$-th layer. Then the descending direction for the $l$-th layer is
\begin{equation}
\label{eq:kfac-direction}
\begin{aligned}
    d^k_l\approx&\ (\hat{H}^k_l+\omega I)^{-1}\hat{g}^k_l
    \approx (\hat{P}^k_l+\sqrt{\omega} I)^{-1}\otimes(\hat{Q}^k_l+\sqrt{\omega} I)^{-1} \cdot \hat{g}^k_l \\
    =&\ \textbf{\rm Vec}\left((\hat{Q}^k_l+\sqrt{\omega} I)^{-1}\textbf{\rm Mat}(\hat{g}^k_l)(\hat{P}^k_l+\sqrt{\omega} I)^{-1}\right).
\end{aligned}
\end{equation}
Then we can naturally get the expression for the parameter update:
\begin{equation}
\label{eq:kfac-update}
\theta^{k+1}_{l}=\theta^k_l-\beta d^k_l, \quad\mbox{for}\quad l=1,2,\cdots,L.
\end{equation}
See Algorithm \ref{alg:GNTD-kfac} for details.

\section{Experiments}
\label{sect:ex}
Finally, we conduct a series of experiments over the OpenAI Gym \cite{brockman2016openai} tasks and demonstrate the efficiency of GNTD method under a variety of settings. For the sake of simplicity, we default to Algorithm \ref{alg:GNTD-kfac} (i.e. GNTD-KFAC) as the implemented version of GNTD in this section.

In details, we first examine the advantage of GNTD over TD in on-policy reinforcement learning setting, where the policy evaluation serves as a built-in module of the policy iteration method. Second, we also consider a few offline RL tasks, where we extend the proposed method to the Q-learning settings. All the compared learning algorithms are trained without exploration. We compare the performance of different algorithms in terms of the Bellman error and the final return.

\subsection{Policy Optimization with GNTD Method}
\label{section:pg-gntd}
First, we present the experiments where GNTD and TD are executed as built-in modules of an entropy regularized \cite{haarnoja2018soft/ac} policy iteration method. Typically, policy iteration is divided into two steps: policy evaluation and policy improvement. 
In details, given an initial policy $\pi_0$, our agents collect a data batch and then perform a 25-step policy evaluation to obtain $Q^{\pi_0}$, by either GNTD or TD method. Then, we take a 1-step policy gradient (PG) ascent to the entropy regularized total reward:
$$
\max_{\pi}\mathbb{E}_{s\sim\mathcal{D}, a\sim \pi(\cdot \mid s)}[Q^{\pi_0}(s,a)+\lambda H(\pi(\cdot\!\mid\! s))],
$$

where $H(\pi(\cdot\!\mid\! s))=\mathbb{E}_{a\sim \pi(\cdot\mid s)} [-\log \pi(a\!\mid\! s)]$ is the entropy of the density function $\pi(\cdot\!\mid\! s)$. Then the agent execute the new policy to collect a new data batch, and loop through policy optimization until convergence.

\begin{figure*}[htbp]
\vspace{-0.3cm} 
\begin{center}
\centering
\subfigure{
\includegraphics[width=0.4\textwidth,height=0.32\textwidth]{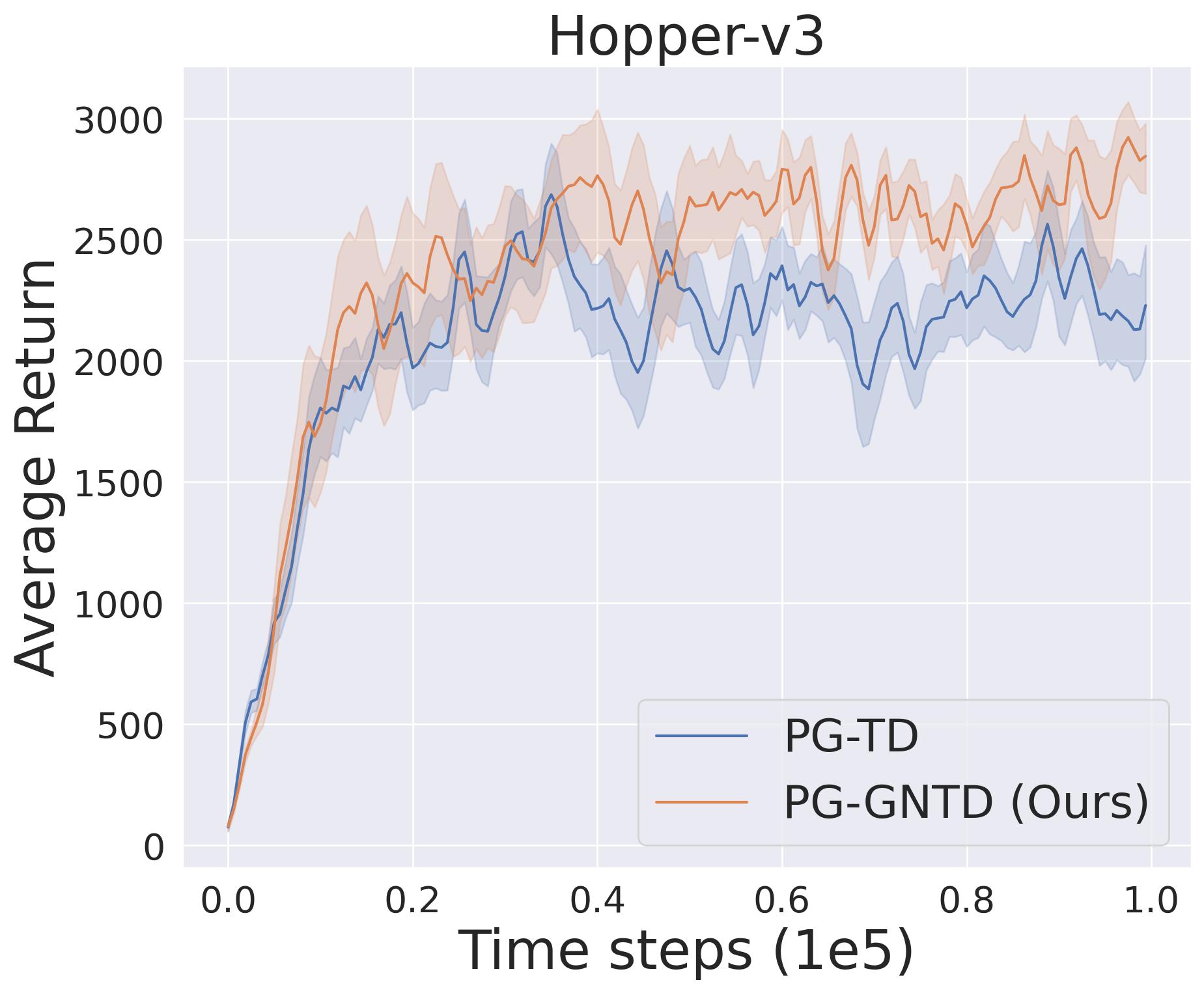}
}
\hspace{-3mm}
\subfigure{
\includegraphics[width=0.4\textwidth,height=0.32\textwidth]{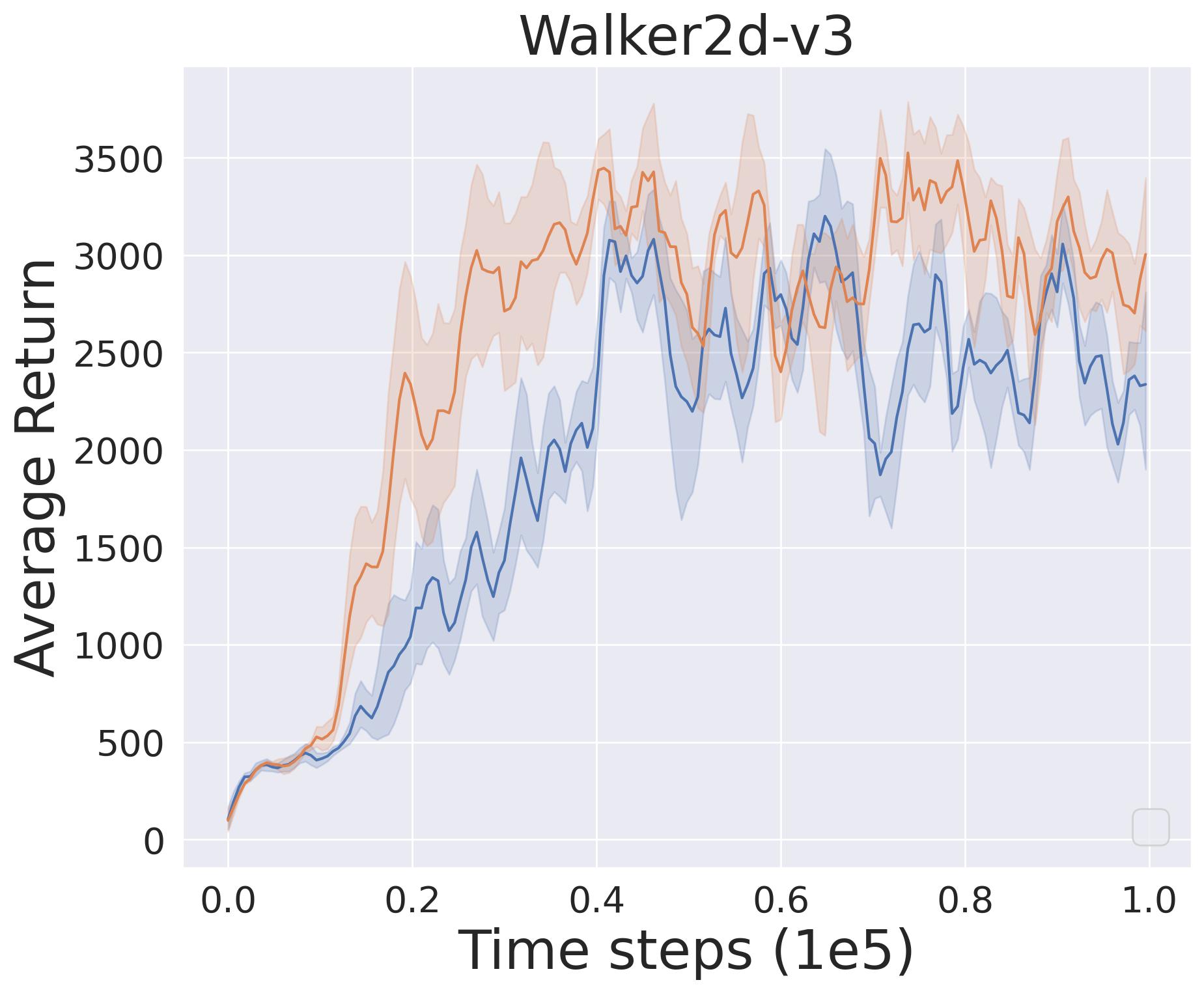}
}

\vskip -8pt
\subfigure{
\includegraphics[width=0.4\textwidth,height=0.32\textwidth]{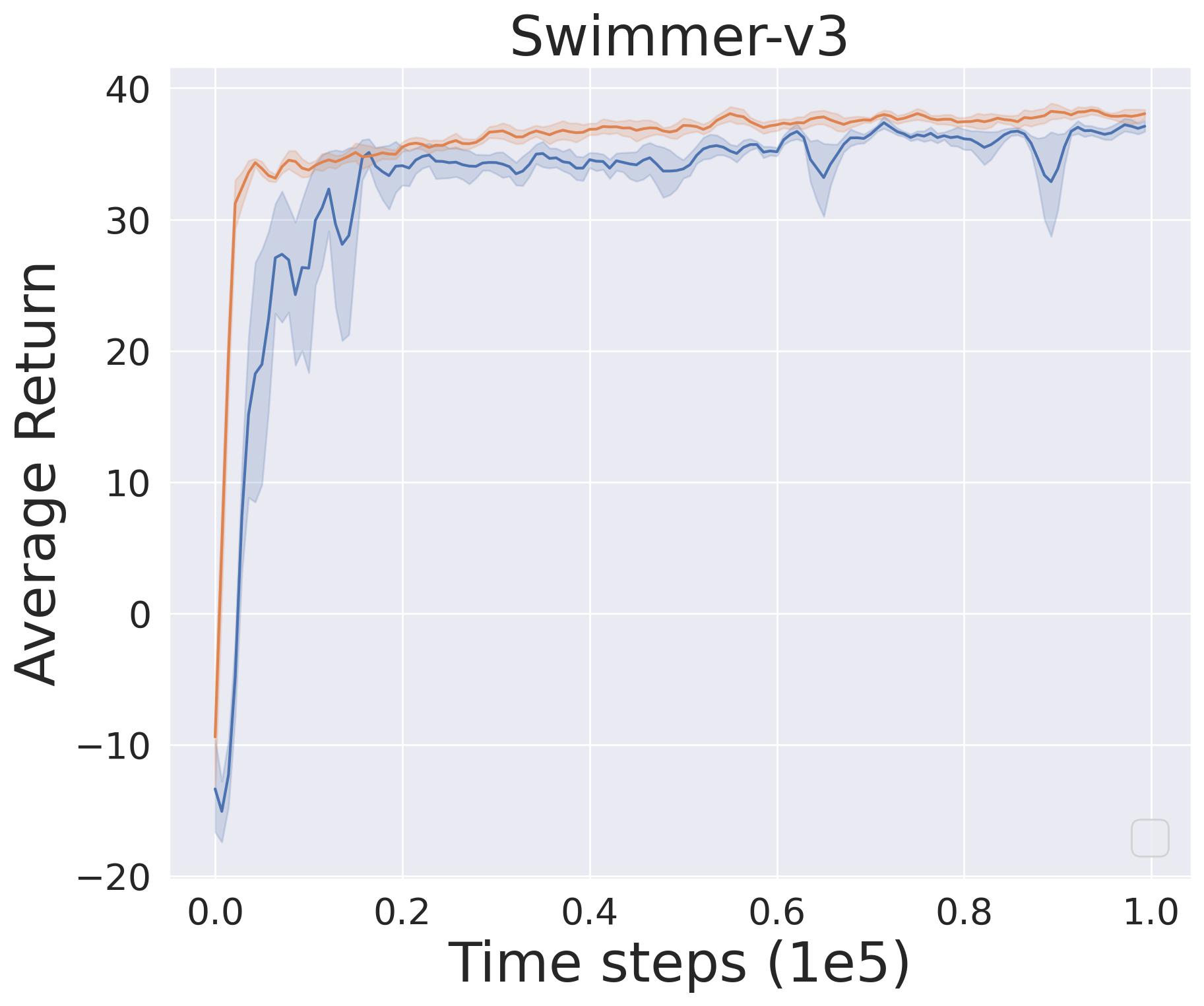}
}
\hspace{-3mm}
\subfigure{
\includegraphics[width=0.4\textwidth,height=0.32\textwidth]{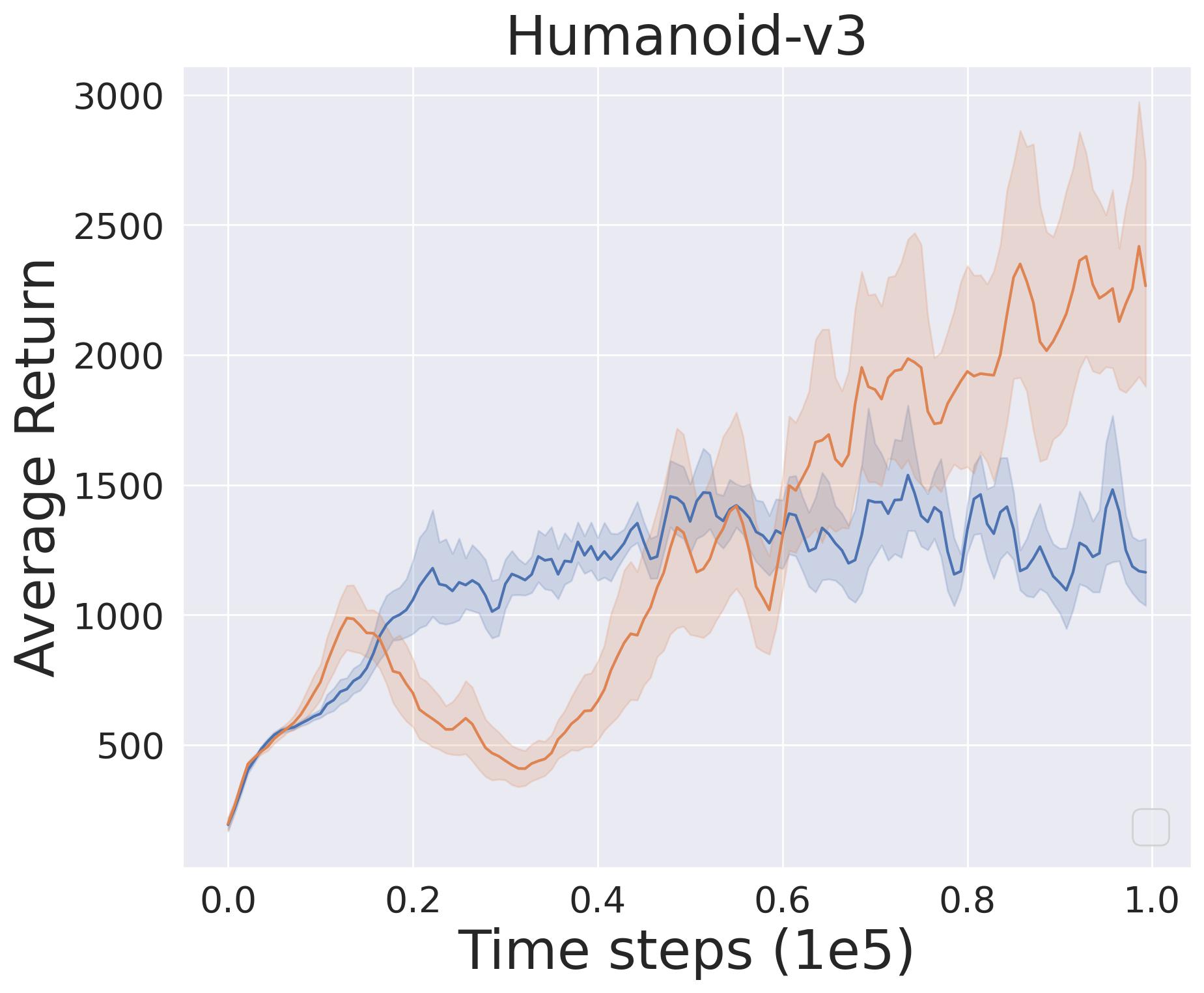}
}
\vspace{-0.3cm}
\caption{Training curves over 5 random seeds on OpenAI Gym MuJoCo tasks. We use TD and GNTD to handle policy evaluation in the policy gradient (PG) algorithm, respectively. The shaded area captures the standard deviation at each iteration. ``Time steps'' refers to the number of occurrences in which various algorithms utilize samples of the same batch size. }
\label{fig:pg-td-gntd}
\end{center} 
\end{figure*}

\vspace{-0.2cm}
\begin{table}[h]
\begin{center}
\tabcolsep=0.1cm
\begin{small}
\begin{tabular}{l|cccc|ccccr}
\toprule
\ & TD & DQN & GNTD & GNDQN & TD & DQN & GNTD & GNDQN \\
Data Set & \multicolumn{4}{c|}{\emph{Bellman Error}}& \multicolumn{4}{c}{\emph{Total Time} (s)} \\
\midrule
CartPole-rep    & 757.93 & 0.6 & 3.03 &\textbf{0.51}&   -- & 23.17 & 26.59 &\textbf{15.46}\\
CartPole-med-rep    & 97.78 & 0.66 & 1.51 & \textbf{0.58}&  -- & 36.65 & 30.73 & \textbf{26.96} \\
Acrobot-rep    &1.32 & 0.63 & 0.71 & \textbf{0.52}& --&103.55 & \textbf{56.21} & 99.98   \\
Acrobot-med-rep &1.41 & 0.68 & 0.68 & \textbf{0.58}& -- & 95.67 & \textbf{46.33} & 89.71 \\
\bottomrule
\end{tabular}
\vspace{0.3cm}
\caption{The left half of the table shows the average Bellman error of four algorithms TD, DQN, GNTD, and GNDQN on various datasets, while The right half of the table shows the average total time (s) required for these four algorithms to reach the corresponding reward threshold. Here CartPole has a threshold of $-400$, while Acrobot has a threshold of $-100$. }
\label{tab:bellman-error}
\end{small}
\end{center}
\vspace{-0.5cm}
\end{table}

For the policy function $\pi$ and state-action value function $Q$, we employ two layer neural networks. For computational efficiency, we implement the GNTD alorithm with K-FAC method. We set the damping rate $\omega=0.25$ and the learning rate $\beta=0.0003$ of the $Q$-function. 

Figure \ref{fig:pg-td-gntd} shows the experimental results under several
on-policy OpenAI gym environments. PG-GNTD and PG-TD refer to the policy optimization based on GNTD and TD, respectively. It can be observed that PG-GNTD converges faster than PG-TD in Hopper, Walker2d and Swimmer environments. It also fetches higher final rewards in all tasks.

\subsection{Offline Reinforcement Learning Tasks}
In this section, we present the experiments for both discrete and continuous offline RL tasks, where we will focus on optimizing rather than evaluating the policy. We compare the performance of our method against several benchmarks in terms of the Bellman error and average return.

\begin{figure*}[htbp]
\vspace{-0.3cm} 
\begin{center}
\centering
\subfigure{
\includegraphics[width=0.4\textwidth,height=0.32\textwidth]{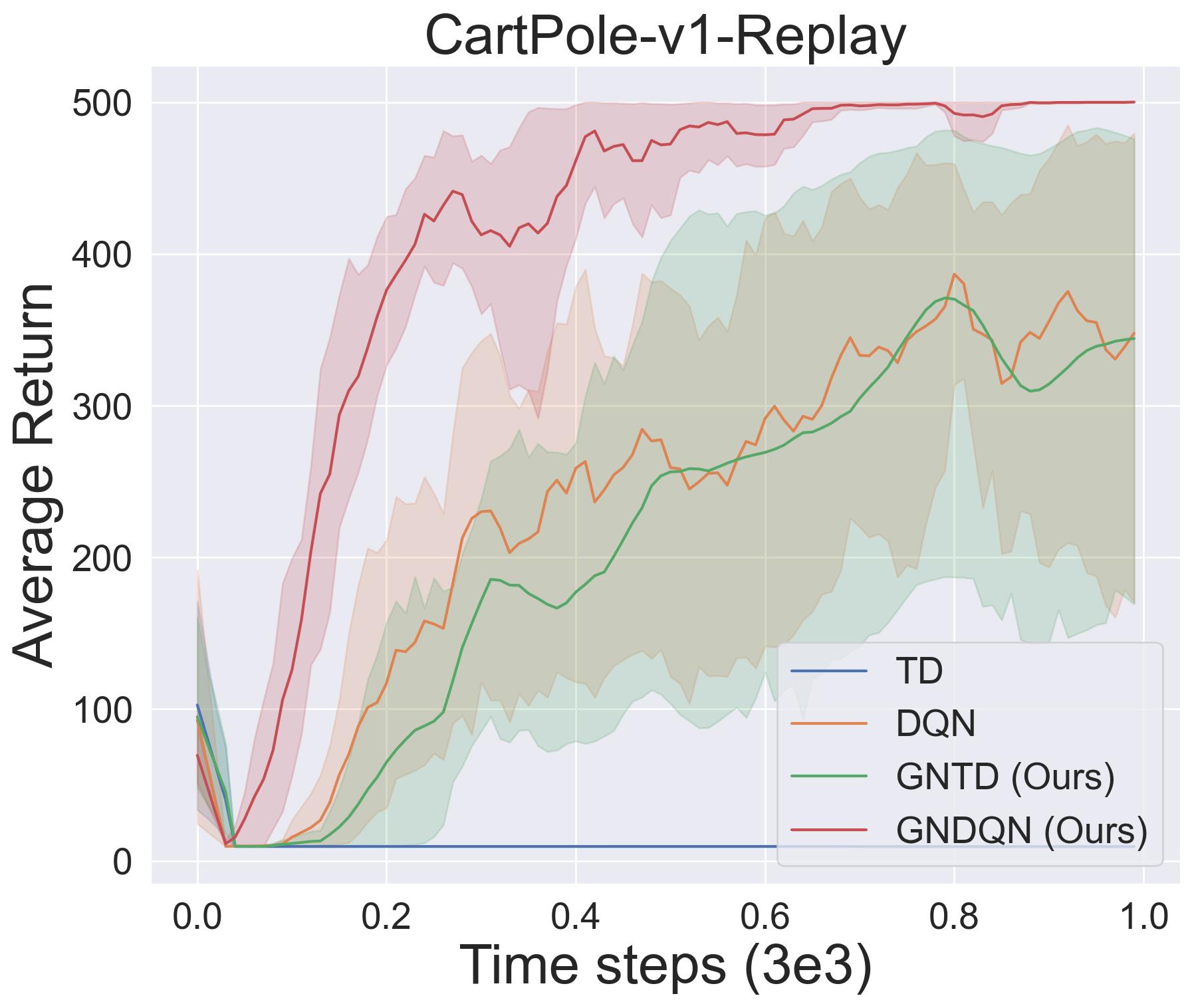}
}
\hspace{-3mm}
\subfigure{
\includegraphics[width=0.4\textwidth,height=0.32\textwidth]{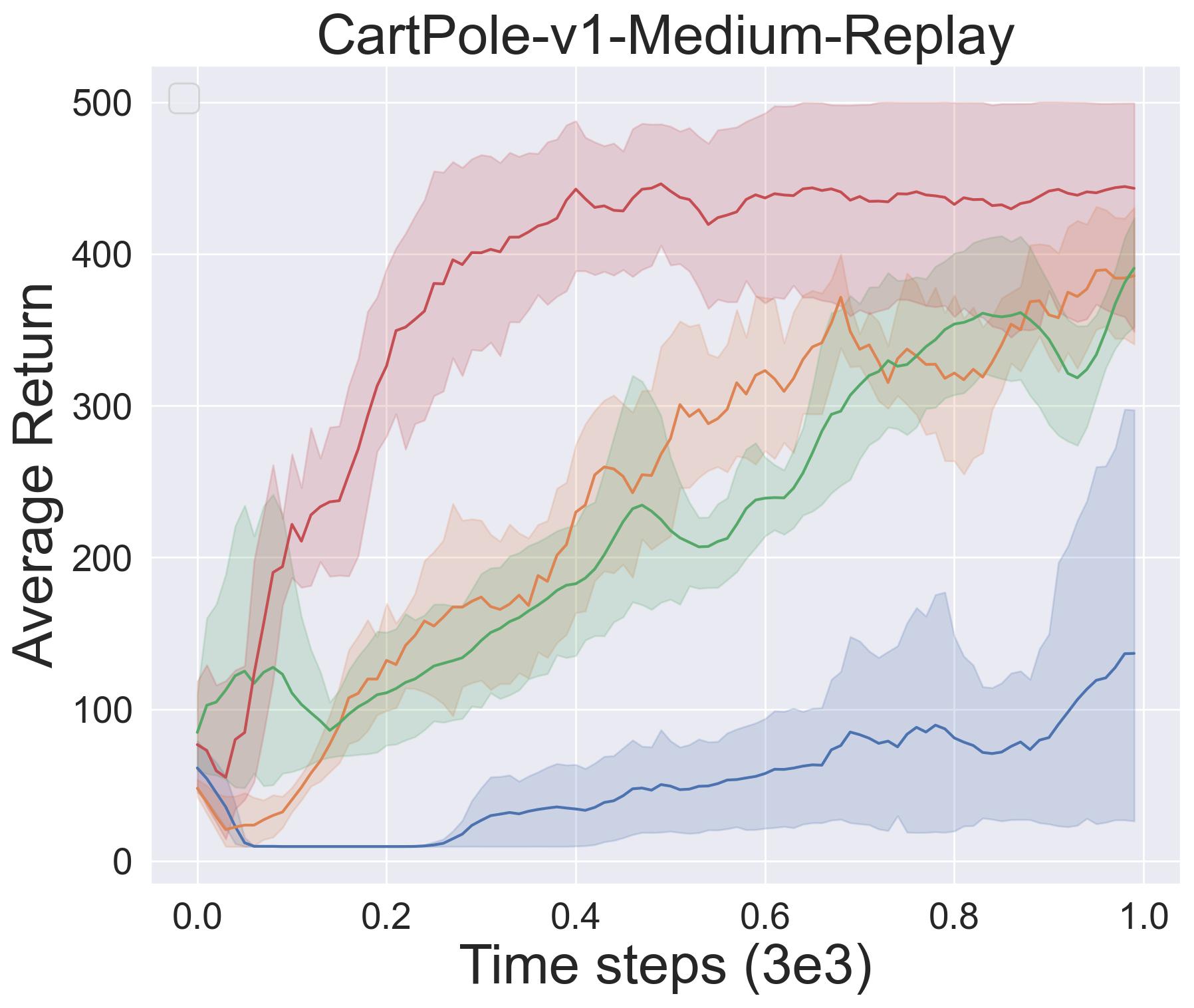}
}

\vskip -8pt
\subfigure{
\includegraphics[width=0.4\textwidth,height=0.32\textwidth]{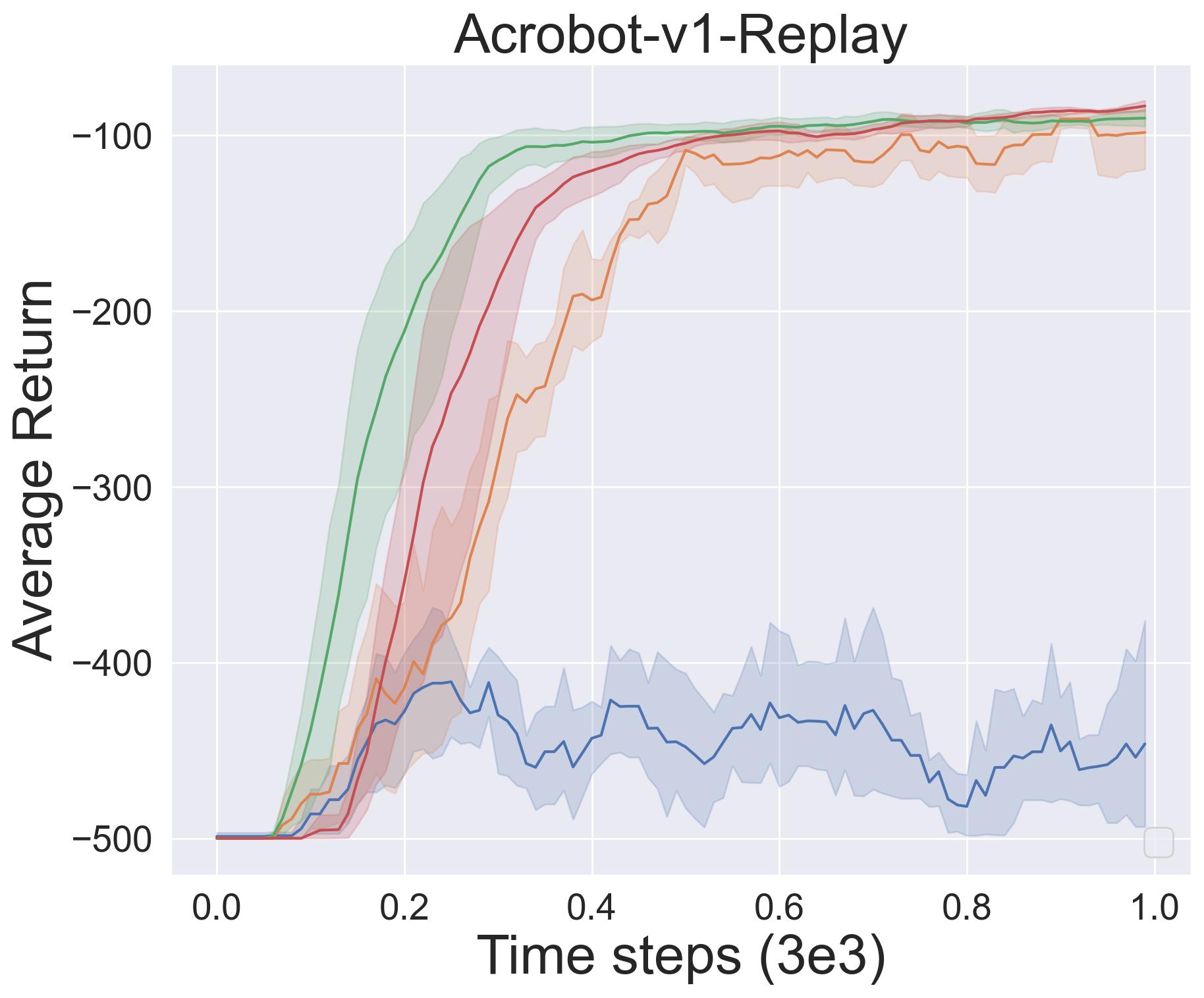}
}
\hspace{-3mm}
\subfigure{
\includegraphics[width=0.4\textwidth,height=0.32\textwidth]{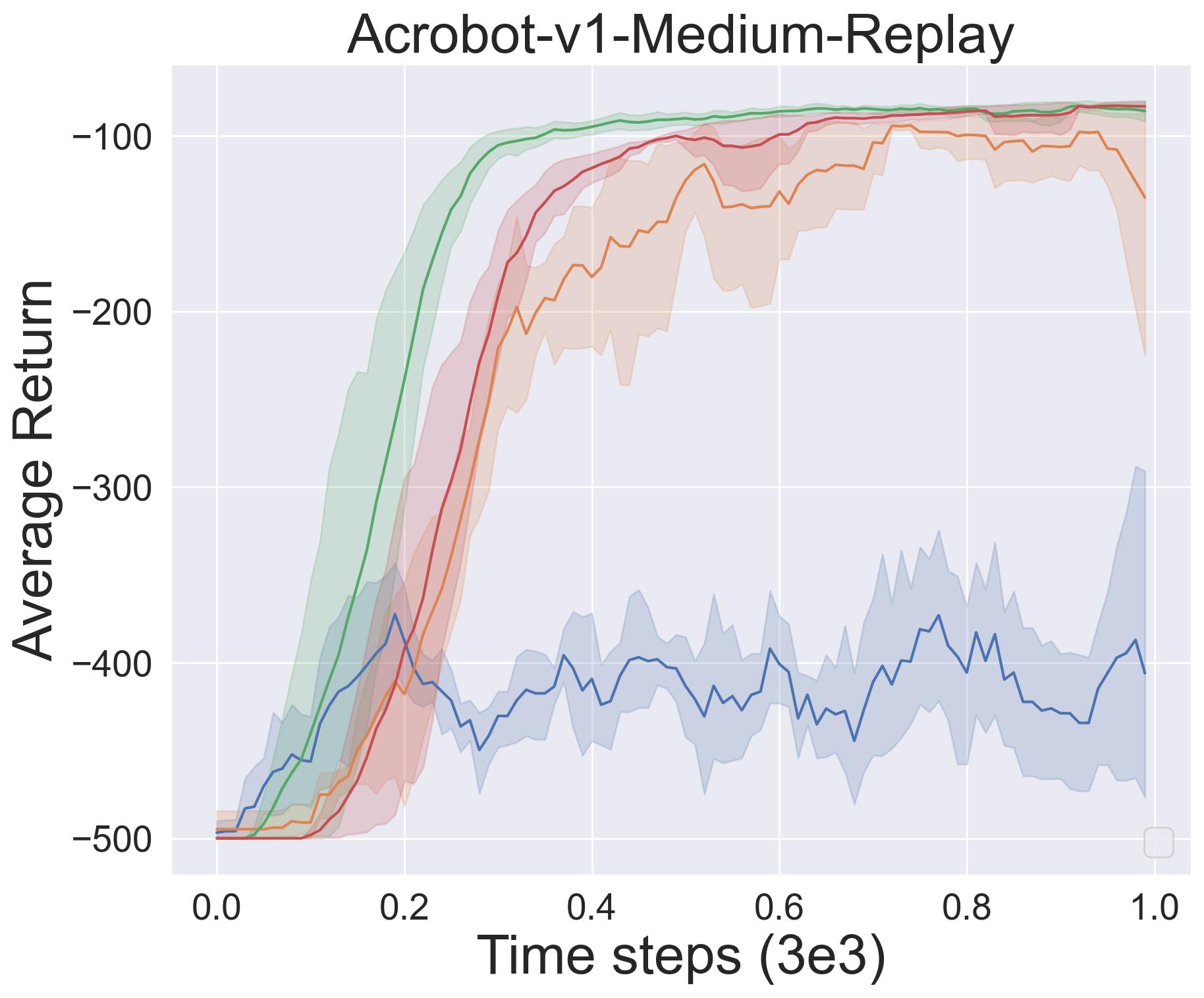}
}
\vspace{-0.3cm} 
\caption{Training curves over 5 random seeds on OpenAI Gym offline discrete tasks. The shaded area captures the standard deviation at each iteration.} 
\label{fig:q-learning-offline}
\end{center}
\vspace{-0.5    cm} 
\end{figure*}

\subsubsection{Discrete Action Tasks}
\label{section:gntd-dqn}
In this experiment, we present the experimental results under the OpenAI Gym CartPole-v1 and Acrobot-v1 environments. The tested algorithms includes TD, DQN \cite{mnih2013playing/dqn}, GNTD, and GNDQN. In particular, both TD and GNTD are adapted to the Q-learning setting where the Bellman operator is replaced with \emph{optimal} Bellman operator. 
GNDQN is a variant of GNTD method that incorporates a DQN-style momentum update to the target network, while taking Gauss-Newton steps with K-FAC method to update the weight matrices. Furthermore, the four algorithms use the same neural network architecture and has the same learning rate of $\beta=0.0003$. The size of all  offline datasets is chosen as $100000$, and 
we set the damping rate to be $\omega=0.25$.

Compared to the on-policy setting, offline RL requires strong conditions on the data distribution in order to obtain an optimal or near optimal policy. According to \cite{fan2020theoretical/fqi, agarwal2020/offRL/offQL}, we consider the following types of datasets: 

\begin{itemize}[leftmargin=*]
    \item \textbf{Replay datasets.} Train an online policy until convergence and use all samples during training.
    \item \textbf{Medium-replay datasets.} Train an online sub-optimal policy and use all samples during training.
\end{itemize}

From Figure \ref{fig:q-learning-offline} and Table \ref{tab:bellman-error}, it can be observed that GNTD outperforms TD in terms of both convergence speed, final reward, and Bellman error. After incorporating the momentum into the target network update, our GNDQN also dominates DQN in all the reported performance measures.

\subsubsection{Continuous Tasks}
\label{section:gntd3+bc}
Finally, we examine the performance of GNTD on the OpenAI Gym MuJoCo tasks using D4RL datasets. In these tasks, we propose a GNTD3+BC variant our method that merges GNTD with TD3+BC \cite{fujimoto2021minimalist/offRL}. In details, it is common to add a behavioral cloning regular term $\lambda$ to constrain the expected total return
\begin{equation*}
\label{eq:gntd-actor}
    J_\pi(\phi)= \mathbb{E}_{(s, a) \sim \mathcal{D}}\left[ Q(s, \pi_\phi (s);\theta_1)-\lambda(\pi_\phi (s)-a)^2\right].
\end{equation*}

For the critic part, we apply GNTD method (K-FAC style) to minimize the following MSBE of the Clipped Double DQN \cite{fujimoto2018addressing/ac}, that is,
\begin{equation*}
\label{eq:gntd-critic}
    J_V(\theta_i)=\mathbb{E}_{\xi \sim \mathcal{D}}\left[\left(Q(s,a;\theta_i)-Q_{targ}(s,a)\right)\right]^2,\ i\in\{1,2\},
\end{equation*}
where 
$$
Q_{targ}(s,a)=r(s,a)+\min\{Q(s^\prime,\pi(s^\prime);\theta_{1,targ}), Q(s^\prime,\pi(s^\prime);\theta_{2,targ})\}.
$$
We update the relevant parameters by optimizing $J_\pi$ and $J_V$.

Table \ref{tab:d4rl-score} shows the final numerical results for GNTD3+BC and other baselines, including BC (behavioral cloning), BCQ \cite{fujimoto2019policy/offRL}, CQL \cite{kumar2020conservative/offRL} and TD3+BC \cite{fujimoto2021minimalist/offRL}. Compared with TD3+BC, GNTD3+BC has higher final returns and lower variance in multiple environmental settings.

\begin{table*}[t]
\vspace{-0.1cm} 
\begin{center}
\tabcolsep=0.1cm
\begin{small}
\begin{tabular}{lcccc|cr}
\toprule
 & BC & BCQ & CQL\cite{kumar2020conservative/offRL} & TD3+BC & GNTD3+BC(Ours) \\
\midrule
Halfcheetah-m & 42.4$\pm$ 0.2 & 47.2$\pm$ 0.4 & 37.2 & 46.5$\pm$ 17.6 & \textbf{56.7$\pm$ 0.3} \\
Hopper-m & 30.1$\pm$ 0.3 & 34.0$\pm$ 3.8 & 44.2  & \textbf{100.2$\pm$ 0.2} & \textbf{100.5$\pm$ 0.3} \\
Walker2d-m & 12.6$\pm$ 3.1 & 53.3$\pm$ 9.1 & 57.5 &79.4$\pm$ 1.6 & \textbf{81.1$\pm$ 1.3} \\
\hline
Halfcheetah-m-r & 34.5$\pm$ 0.8 & 33.0$\pm$ 1.7 & 41.9 & \textbf{42.2$\pm$ 0.8} & \textbf{42.6$\pm$ 0.4} \\
Hopper-m-r & 20.0$\pm$ 3.1 & 28.6$\pm$ 1.1 & 28.6  & \textbf{31.8$\pm$ 2.0} & \textbf{32.4$\pm$ 1.3} \\
Walker2d-m-r & 8.1$\pm$ 1.3 & 11.5$\pm$ 1.3 & 15.8  & \textbf{23.9$\pm$ 1.6} & \textbf{24.8$\pm$ 2.1} \\
\hline
Halfcheetah-m-e & 70.6$\pm$ 7.1 & 84.4$\pm$ 4.5 & 27.1 & 90.9$\pm$ 3.6 & \textbf{98.2$\pm$ 3.3} \\
Hopper-m-e & 92.5$\pm$ 15.1 & 111.4$\pm$ 1.2 & 111.4  & \textbf{111.9$\pm$ 0.4} & \textbf{112.0$\pm$ 0.1} \\
Walker2d-m-e & 12.2$\pm$ 3.3 & 50.7$\pm$ 7.2 & 68.1  & 94.6$\pm$ 15.5 & \textbf{104.5$\pm$ 4.4} \\
\hline
Halfcheetah-e & 104.9$\pm$ 1.4 & 96.6$\pm$ 2.8 & 82.4& 103.9$\pm$ 1.7 & \textbf{107.6$\pm$ 0.6} \\
Hopper-e & 111.3$\pm$ 0.9 & 108.7$\pm$ 5.1 & 111.2 & \textbf{112.3$\pm$ 0.1} & \textbf{112.2$\pm$ 0.}5 \\
Walker2d-e & 58.1$\pm$ 9.1 & 92.6$\pm$ 5.0 & 103.8 & 105.2$\pm$ 1.8 & \textbf{107.6$\pm$ 1.5} \\
\hline
\specialrule{0em}{1pt}{1pt}
Total & 597.3$\pm$ 37.7 & 752.0$\pm$ 43.2 & 728.9 &  942.8$\pm$ 46.9 & \textbf{980.2$\pm$16.1}  \\
\bottomrule
\end{tabular}
\caption{Average normalized score over the final 10 evaluations and 5 seeds. Under the same configuration, we run the four algorithms of BC, BCQ, TD3+BC, GNTD3+BC, and also use the results reported by CQL and TD3+BC before. $\pm$ captures the standard deviation. ``m'', ``m-r'', ``m-e'', ``e'' respectively denote ``median'', ``median-replay'', ``median-expert'', ``expert''.}
\label{tab:d4rl-score}
\end{small}
\end{center}
\vspace{-0.5cm}
\end{table*}

\appendix
\section{Supporting Lemmas}
\label{sect:appendix}
In this section, we present the proof of the technical lemmas used in our main theorems.
\begin{lemma}
\label{lemma:vector-bernstein}
Let $\left\{S_i\right\}_{i=1}^N$ be a sequence of independent random vectors. Assume $\mathbb{E} S_i=\mathbf{0}$ and $\left\|S_i\right\| \leq \sigma, \,\forall i$, then
$$
\mathbb{P}\left(\bigg\|\frac{1}{N}\sum_{i=1}^N S_i\bigg\|>\frac{\rho\sigma}{\sqrt{N}}\right)\leq \exp\Big\{\!\!-\frac{\rho^2}{3}\Big\} ,\quad\mbox{for}\quad \forall \rho \geq 0.
$$
\end{lemma}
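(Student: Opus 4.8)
The plan is to prove this as a standard dimension-free vector Bernstein/concentration inequality, combining a second-moment bound on the mean with a sub-Gaussian tail around that mean that is obtained \emph{without} union-bounding over directions.

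First I would control the expected norm. Writing $\bar S=\frac1N\sum_{i=1}^N S_i$, independence together with $\mathbb{E}S_i=\mathbf{0}$ gives $\mathbb{E}\|\bar S\|^2=\frac1{N^2}\sum_{i=1}^N\mathbb{E}\|S_i\|^2\le \sigma^2/N$, hence by Jensen $\mathbb{E}\|\bar S\|\le \sigma/\sqrt N$. So it remains to show that $\|\bar S\|$ concentrates above its mean at scale $\sigma/\sqrt N$ with the stated exponential rate.

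For the concentration I would use a Doob-martingale argument applied to $Y:=\|\sum_{i=1}^N S_i\|$. Let $U_k=\mathbb{E}[\,Y\mid S_1,\dots,S_k\,]$, so $U_0=\mathbb{E}Y$ and $U_N=Y$. Since replacing $S_k$ by any admissible value changes $Y$ by at most $\mathrm{diam}(\mathrm{supp}(S_k))\le 2\sigma$ (here only the $1$-Lipschitzness of $\|\cdot\|$ is used, which is why no $\varepsilon$-net is needed), each increment $U_k-U_{k-1}$ is conditionally mean-zero and supported in an interval of length $\le 2\sigma$; Hoeffding's lemma gives $\mathbb{E}[e^{\lambda(U_k-U_{k-1})}\mid S_1,\dots,S_{k-1}]\le e^{\lambda^2\sigma^2/2}$. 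Multiplying these bounds along $k=1,\dots,N$ and applying Markov's inequality yields $\mathbb{P}(Y-\mathbb{E}Y\ge s)\le \exp(-s^2/(2N\sigma^2))$, i.e. $\mathbb{P}(\|\bar S\|\ge \mathbb{E}\|\bar S\|+u)\le \exp(-Nu^2/(2\sigma^2))$. Combining with $\mathbb{E}\|\bar S\|\le \sigma/\sqrt N$ and setting $u=(\rho-1)\sigma/\sqrt N$ already gives a tail of the form $\exp(-\Theta(\rho^2))$. To recover the clean constant $1/3$ uniformly in $\rho$, I would (a) sharpen the martingale step to a Bernstein-type estimate, exploiting that the increments also have small conditional variance, bounded via $\sum_i\mathbb{E}\|S_i\|^2\le N\sigma^2$, and (b) observe that the event $\{\|\bar S\|>\rho\sigma/\sqrt N\}$ is vacuous once $\rho>\sqrt N$ because $\|\bar S\|\le\sigma$ deterministically; this confines attention to $\rho\le\sqrt N$, so the $O(\rho/\sqrt N)$ term in the Bernstein denominator is an absolute constant and can be absorbed to leave $\exp(-\rho^2/3)$. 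Alternatively, the inequality in exactly this form may simply be cited from the vector-Bernstein literature.

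The main obstacle is keeping the tail genuinely dimension-free: the naive route of fixing a unit vector $u$, applying scalar Bernstein to $\langle u,\sum_i S_i\rangle$, and taking a supremum over $u$ costs a factor exponential in the ambient dimension, which is unacceptable for how the lemma is used later. The Doob-martingale/bounded-differences approach above sidesteps this entirely, and the only remaining effort is the constant bookkeeping in step (a)--(b) above.
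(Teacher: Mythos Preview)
Your approach is correct in spirit and genuinely different from the paper's. The paper does not build anything from scratch: it observes that $\|S_i\|\le\sigma$ trivially implies the light-tail condition $\mathbb{E}\bigl[\exp\{\|S_i\|^2/\sigma^2\}\bigr]\le e$, and then invokes Lemma~4.1 of Lan, \emph{First-order and Stochastic Optimization Methods for Machine Learning} (2020), which delivers the stated bound with constant $1/3$ directly. In contrast, you outline a self-contained argument via a Doob martingale on $Y=\|\sum_i S_i\|$ and bounded differences, which cleanly avoids any dimension dependence and gives $\mathbb{P}(Y-\mathbb{E}Y\ge s)\le\exp(-s^2/(2N\sigma^2))$; combined with $\mathbb{E}Y\le\sigma\sqrt N$ this yields $\exp(-(\rho-1)^2/2)$ for $\rho\ge1$. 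The remaining step---upgrading this to $\exp(-\rho^2/3)$ uniformly in $\rho$---is where your proposal becomes vague: the ``Bernstein sharpening'' in (a) requires a bound on the conditional variances of the martingale increments that is not immediate from bounded differences alone, and the bookkeeping to land exactly on $1/3$ is nontrivial. Your fallback of citing the vector-Bernstein literature is, in fact, precisely what the paper does. So: your route buys a more transparent, from-first-principles argument at the cost of a looser or hand-waved constant; the paper's route buys the exact constant in two lines at the cost of a black-box citation.
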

The proof of this result is straightforward. By
$\|S_i\|\leq \sigma$, we have $\mathbb{E}[\exp\{\|S_i\|^2/\sigma^2\}]\leq \exp\{1\}.$ Then applying Lemma 4.1 \cite{lan2020first/concentration} proves this lemma. 

\begin{lemma}
\label{lemma:matrix-bernstein}
Suppose that $\|\nabla Q(s,a;\theta)\|\leq L_1$ for any $(s,a), \theta$. Then for any $\eta \in(0,1)$ and any iteration $\theta^k$, we have $\|\hat{H}^k-H(\theta^k)\|_2\leq \eta$ w.p. $1-\delta$ as long as the sample size $|\mathcal{D}_k|\geq\frac{6L_1^4}{\eta^2}\log \frac{2d}{\delta}$.
\end{lemma}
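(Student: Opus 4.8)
The plan is to prove Lemma \ref{lemma:matrix-bernstein} via a matrix concentration inequality, treating $\hat H^k - H(\theta^k)$ as an average of $|\mathcal{D}_k|$ i.i.d. centered symmetric random matrices. Fix the iteration index $k$ and write $H := H(\theta^k)$. For each sample $\xi_i=(s_i,a_i,r_i,s_i',a_i')\in\mathcal{D}_k$, set $X_i := \nabla Q(s_i,a_i;\theta^k)\nabla Q(s_i,a_i;\theta^k)^\top - H$. Conditioned on $\theta^k$, the $X_i$ are i.i.d. with $\mathbb{E}[X_i]=0$ by the definition \eqref{eq:H-and-g} of $H$, and $\hat H^k - H = \frac{1}{N}\sum_{i=1}^N X_i$ with $N=|\mathcal{D}_k|$. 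First I would bound $\|X_i\|_2$ deterministically: Assumption \ref{assumption:smooth} gives $\|\nabla Q(s,a;\theta^k)\|\le L_1$ for all $(s,a)$ — this is exactly the gradient bound implied by $L_1$-Lipschitzness — so $\|\nabla Q\,\nabla Q^\top\|_2 = \|\nabla Q\|^2 \le L_1^2$, and hence $\|X_i\|_2 \le 2L_1^2$ by the triangle inequality, and likewise the per-sample second moment $\|\mathbb{E}[X_i^2]\|_2 \le \|X_i\|_2^2 \le 4L_1^4$ (using $\mathbb{E}[X_i^2]\preceq \mathbb{E}[(\nabla Q\nabla Q^\top)^2]\preceq L_1^2 H \preceq L_1^4 I$, which is an even tighter bound on the variance proxy).

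The main step is to invoke the matrix Bernstein inequality (Tropp): for i.i.d. centered symmetric $d\times d$ matrices $X_i$ with $\|X_i\|_2\le R$ a.s. and variance parameter $\nu_{\mathrm{var}} := \big\|\sum_i \mathbb{E}[X_i^2]\big\|_2$, one has
\[
\mathbb{P}\!\left(\Big\|\tfrac1N\textstyle\sum_{i=1}^N X_i\Big\|_2 \ge \eta\right) \;\le\; 2d\,\exp\!\left(\frac{-N^2\eta^2/2}{\nu_{\mathrm{var}} + RN\eta/3}\right).
\]
Plugging in $R = 2L_1^2$ and $\nu_{\mathrm{var}} \le 4NL_1^4$ (or $NL_1^4$ with the tighter variance bound), and using that for $\eta\in(0,1)$ and $L_1\ge 1$ (WLOG, or absorbing constants) the variance term dominates the denominator so that the exponent is at most $-\,cN\eta^2/L_1^4$ for an absolute constant $c$, the failure probability is bounded by $2d\exp(-N\eta^2/(6L_1^4))$. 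Setting this $\le \delta$ and solving for $N$ yields exactly $N = |\mathcal{D}_k| \ge \frac{6L_1^4}{\eta^2}\log\frac{2d}{\delta}$, which is the claimed bound. I would present the constant bookkeeping so that it lands on $6$ rather than chasing the sharpest constant.

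The main obstacle — really the only nontrivial point — is getting the constant in the denominator of the Bernstein exponent to be clean enough that it closes at $6L_1^4/\eta^2$: this requires noting that $\eta<1$ and that the $R N\eta/3$ term is of the same order or smaller than $\nu_{\mathrm{var}}$, so that $\nu_{\mathrm{var}} + RN\eta/3 \le N L_1^4 \cdot(\text{const})$; one must be slightly careful about whether $L_1\ge 1$, but since $L_1^4$ appears in the numerator of the sample bound we may freely assume $L_1\ge 1$ (or replace $L_1$ by $\max\{L_1,1\}$) without loss of generality. A secondary bookkeeping issue is that the paper's stochastic Hessian $\hat H^k = (J^k)^\top \hat U^k J^k$ (weighting by the \emph{empirical} distribution $\hat\mu^k$) is literally the sample average $\frac1N\sum_i \nabla Q(s_i,a_i;\theta^k)\nabla Q(s_i,a_i;\theta^k)^\top$ when the $(s_i,a_i)$ are drawn i.i.d.\ from $\mu$, so the i.i.d.\ structure needed for matrix Bernstein is automatic; I would state this identification explicitly at the start. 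Everything else is routine.
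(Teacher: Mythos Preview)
Your proposal is correct and follows essentially the same route as the paper: both center the per-sample outer products, bound them in operator norm by $2L_1^2$ via the Lipschitz assumption, bound the variance proxy by $O(L_1^4)$, apply Tropp's matrix Bernstein inequality, and then solve the resulting tail bound for the sample size using $\eta\in(0,1)$ to absorb the mixed term. The only cosmetic difference is that the paper normalizes by $1/|\mathcal{D}_k|$ inside the summands (defining $S(\xi)=\tfrac{1}{N}X_i$) whereas you normalize the sum; the constant bookkeeping and the implicit $L_1\ge 1$ (or replacing $L_1^2$ by $L_1^4$ in the linear term) concern you raise is exactly what the paper does as well.
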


\begin{proof}
First, recall the definition that 
$$
\hat{H}^k=\frac{1}{|\mathcal{D}_k|}\sum_{\xi\in \mathcal{D}_k} \nabla Q(s,a;\theta^k) \nabla Q(s,a;\theta^k)^{\top}
$$
and
$$
H(\theta^k)=\mathbb{E}_{\xi\sim\mathcal{D}}\left[ \nabla Q(s,a;\theta^k) \nabla Q(s,a;\theta^k)^{\top}\right].
$$
Define $S(\xi)=\dfrac{1}{|\mathcal{D}_k|}\left( \nabla Q(s,a;\theta) \nabla Q(s,a;\theta)^{\top}-J(\theta)^{\top} U J(\theta)\right)$. It holds that $\mathbb{E}[S(\xi)]=0,\|S(\xi)\|_2\leq \frac{2L_1^2}{|\mathcal{D}_k|}$ and  $\|E[S(\xi)S(\xi)^{\top}]\|\leq \frac{2L_1^4}{|\mathcal{D}_k|^2}.$
By Matrix Bernstein Inequality \cite{tropp2015introduction/concentration}, we have
$$
\mathbb{P}\bigg(\Big\|\sum_{\xi\in\mathcal{D}_k} S(\xi)\Big\|\geq \eta\bigg)\leq 2d\cdot \exp\left\{\frac{-3|\mathcal{D}_k|\eta^2}{4L_1^4  (3+\eta)}\right\}. 
$$
Choosing the batch size to satisfy $2d\cdot \exp\left\{\frac{-3|\mathcal{D}_k|\eta^2}{4L_1^4  (3+\eta)}\right\}\leq \delta$ proves this lemma.
\end{proof}


\bibliographystyle{siamplain}
\bibliography{article}
\end{document}


\maketitle

\section{A detailed example}

Here we include some equations and theorem-like environments to show
how these are labeled in a supplement and can be referenced from the
main text.
Consider the following equation:
\begin{equation}
  \label{eq:suppa}
  a^2 + b^2 = c^2.
\end{equation}
You can also reference equations such as \cref{eq:matrices,eq:bb} 
from the main article in this supplement.

\lipsum[100-101]

\begin{theorem}
An example theorem.
\end{theorem}

\lipsum[102]
 
\begin{lemma}
An example lemma.
\end{lemma}

\lipsum[103-105]

Here is an example citation: \cite{KoMa14}.

\section[Proof of Thm]{Proof of \cref{thm:bigthm}}
\label{sec:proof}

\lipsum[106-112]

\section{Additional experimental results}
\Cref{tab:smfoo} shows additional
supporting evidence. 

\begin{table}[htbp]
\footnotesize
  \caption{Example table.}\label{tab:smfoo}
\begin{center}
  \begin{tabular}{|c|c|c|} \hline
   Species & \bf Mean & \bf Std.~Dev. \\ \hline
    1 & 3.4 & 1.2 \\
    2 & 5.4 & 0.6 \\ \hline
  \end{tabular}
\end{center}
\end{table}

\bibliographystyle{siamplain}
\bibliography{references}